\newtheorem{mainthm}{Theorem}
\numberwithin{equation}{section}
\theoremstyle{plain}
\newtheorem{theorem}[equation]{Theorem}
\newtheorem{lemma}[equation]{Lemma}
\newtheorem{condition}[equation]{Condition}
\newtheorem{proposition}[equation]{Proposition}
\newtheorem{corollary}[equation]{Corollary}
\theoremstyle{remark}
\newtheorem{remark}[equation]{Remark}
\theoremstyle{definition}
\newtheorem{definition}[equation]{Definition}
\newtheorem{notation}[equation]{Notation}
\newtheorem{convention}[equation]{Convention}
\newcommand{\D}{\displaystyle}
\def\Zee{\mathbb{Z}}
\def\Q{\mathbb{Q}}
\def\R{\mathbb{R}}
\def\Cee{\mathbb{C}}
\def\Pee{\mathbb{P}}
\def\Id{\operatorname{Id}}
\def\Pic{\operatorname{Pic}}
\def\End{\operatorname{End}}
\def\Hom{\operatorname{Hom}}
\def\Ker{\operatorname{Ker}}
\def\Sym{\operatorname{Sym}}
\def\scrO{\mathcal{O}}
\def\Lie{\operatorname{Lie}}
\newcommand{\bP}{\mathbb{P}}
\newcommand{\bA}{\mathbb{A}}
\newcommand{\bG}{\mathbf{G}}
\newcommand{\bK}{\mathbf{K}}
\newcommand{\bfP}{\mathbf{P}}
\newcommand{\bD}{\mathbf{D}}
\newcommand{\bZ}{\mathbb{Z}}
\newcommand{\bQ}{\mathbb{Q}}
\newcommand{\bR}{\mathbb{R}}
\newcommand{\bC}{\mathbb{C}}
\newcommand{\bH}{\mathbb{H}}
\newcommand{\Res}{\mathrm{Res}}
\newcommand{\disc}{\mathrm{disc}}
\newcommand{\Nm}{\mathrm{Nm}}
\newcommand{\Aut}{\mathrm{Aut}}
\newcommand{\Stab}{\mathrm{Stab}}
\newcommand{\Spin}{\mathrm{Spin}}
\newcommand{\U}{\mathrm{U}}
\newcommand{\E}{\mathrm{E}}
\newcommand{\rank}{\mathrm{rank}}
\newcommand{\id}{\mathrm{id}}
\newcommand{\ad}{\mathrm{ad}}
\newcommand{\bS}{\mathbb{S}}     
\newcommand{\SO}{\mathrm{SO}}
\newcommand{\Sp}{\mathrm{Sp}}
\newcommand{\Gal}{\mathrm{Gal}}
\newcommand{\SL}{\mathrm{SL}}
\newcommand{\SU}{\mathrm{SU}}
\newcommand{\PGL}{\mathrm{PGL}}
\newcommand{\GL}{\mathrm{GL}}
\newcommand{\Gm}{\mathbb G_m}
\newcommand{\Hg}{\mathrm{Hg}}
\newcommand{\MT}{\mathrm{MT}}
\newcommand{\calB}{\mathcal{B}} 
\newcommand{\calD}{\mathcal{D}} 
\newcommand{\fH}{\mathfrak{H}}
\newcommand{\calS}{\mathcal{S}}
\newcommand{\sX}{\mathscr{X}}
\newcommand{\calV}{\mathscr{V}}
\newcommand{\sV}{\mathscr{V}}
\newcommand{\fp}{\mathfrak{p}}
\newcommand{\fa}{\mathfrak{a}}
\newcommand{\fm}{\mathfrak{m}}
\newcommand{\fg}{\mathfrak{g}}
\newcommand{\fh}{\mathfrak{h}}
\newcommand{\fk}{\mathfrak{k}}
\newcommand{\I}{\mathrm{I}}
\newcommand{\II}{\mathrm{II}}
\newcommand{\III}{\mathrm{III}}
\newcommand{\IV}{\mathrm{IV}}
\newcommand{\EIII}{\mathrm{EIII}}
\newcommand{\EVII}{\mathrm{EVII}}
\newcommand{\hg}{\mathfrak{hg}}
\title[]{Semi-algebraic horizontal subvarieties of Calabi--Yau type} 
\date{\today}
\begin{document}

\author[R. Friedman]{Robert Friedman}
\address{Columbia University, Department of Mathematics, New York, NY 10027}
\email{rf@math.columbia.edu}
\author[R. Laza]{Radu Laza}
\address{Stony Brook University, Department of Mathematics,  Stony Brook, NY 11794}
\email{rlaza@math.sunysb.edu}
\thanks{The  second author was partially supported by NSF grant DMS-0968968 and a Sloan Fellowship}

\begin{abstract}
We study  horizontal subvarieties $Z$ of a Griffiths period domain $\bD$. If $Z$ is defined by algebraic equations, and if $Z$ is also invariant under a large discrete subgroup in an appropriate sense,  
we prove that $Z$ is a Hermitian symmetric domain $\calD$, embedded  via a totally geodesic embedding in  $\bD$. Next we  discuss the case when $Z$ is in addition of Calabi--Yau type. We  classify the possible VHS of Calabi--Yau type parametrized by Hermitian symmetric domains $\calD$ and show that they are essentially those found by  Gross and Sheng--Zuo, up to taking factors of symmetric powers and certain shift operations. In the weight three case, we explicitly describe the embedding $Z\hookrightarrow \bD$ from the perspective of Griffiths  transversality and relate this description to the Harish-Chandra realization of $\calD$ and to the  Kor\'anyi--Wolf tube domain description.  There are further connections  to homogeneous  Legendrian varieties  and the  four Severi varieties of Zak. 
\end{abstract}

\bibliographystyle{alpha}
\maketitle

\section*{Introduction}
Let $\bD$ be a period domain, i.e.\ a classifying space for polarized Hodge structures of weight $n$ with Hodge numbers $\{h^{p,q}\}$, $p+q=n$. It has been known since Griffiths' pioneering work that, unless $\bD$ is a classifying space for weight one Hodge structures (polarized abelian varieties) or weight two Hodge structures satisfying $h^{2,0} =1$, then  most of the points of $\bD$ do not come from algebraic geometry in any sense. More precisely, any geometrically defined variation of Hodge structure is contained in a horizontal subvariety of $\bD$, an integral manifold for the differential system corresponding to Griffiths transversality, and the union of all such arising from algebraic geometry is a countable union of proper subvarieties of $\bD$. It is thus of interest to write down specific examples of such horizontal subvarieties $Z$. One of the simplest cases where Griffiths transversality is a nontrivial condition is the case of weight three Hodge structures of Calabi--Yau type, i.e.\ $h^{3,0} =1$. This case is also very important for geometric reasons.

In the general case, since $\bD$ is an open subset of its compact dual $\check\bD$, and $\check\bD$ is a projective variety, it is natural to look at those $Z$ which can be defined algebraically, i.e.\ such that $Z$ is a connected component of $\hat{Z}\cap \bD$, where $\hat{Z}$ is a closed algebraic subvariety of $\check\bD$. We will refer to such $Z$ as \textsl{semi-algebraic in $\bD$}. 

A closed horizontal subvariety $Z$ of $\bD$ coming from algebraic geometry satisfies an additional condition: if $\Gamma$ is the stabilizer of $Z$ in the natural arithmetic group $\bG(\Zee)$ acting on $\bD$, then $\Gamma$ acts properly discontinuously on $Z$, the image $\Gamma \backslash Z \subseteq \bG(\Zee)\backslash \bD$ is a closed subvariety, and it is the image of a quasi-projective variety under a proper holomorphic map. Thus it is reasonable in general to look at closed horizontal subvarieties $Z$ of $\bD$ such that $\Gamma \backslash Z$ is quasi-projective. Actually, we will need a mild technical strengthening of this hypothesis which we call \textsl{strongly quasi-projective} (Definition \ref{strongqp}). Under this hypothesis, we prove the following theorem in Section~\ref{sectalgebraic}:

\begin{mainthm}\label{maintheorem} Let $Z$ be a closed horizontal subvariety of a classifying space $\bD$ for Hodge structures and let $\Gamma$ be the stabilizer of $Z$ in the appropriate arithmetic group $\bG(\Zee)$. Assume that 
\begin{itemize}
\item[(i)] $S=\Gamma\backslash Z$ is strongly quasi-projective;
\item[(ii)] $Z$ is semi-algebraic in $\bD$. 
\end{itemize}
Then  $Z$ is a Hermitian symmetric domain  whose embedding  in $\bD$ is an equivariant, holomorphic, horizontal embedding.
\end{mainthm}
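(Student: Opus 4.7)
The plan is to show that $Z$ is a single orbit of a reductive $\Q$-algebraic subgroup $H \subset \bG$, and then use horizontality to force the Lie algebra $\fh = \Lie H_{\R}$ to be of Hodge type, so that $Z \cong H(\R)^0/K$ is a Hermitian symmetric domain embedded horizontally in $\bD$.

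First, let $H$ denote the $\Q$-Zariski closure of $\Gamma$ in $\bG$. I expect the strong quasi-projectivity assumption to provide that $\Gamma$ is an arithmetic lattice in $H(\R)^0$ and that $H^0$ is reductive (the latter being incompatible with a non-reductive stabilizer through a Borel--Serre/curvature obstruction on the quotient). For a Hodge-generic $z_0 \in Z$, the orbit $\Gamma \cdot z_0 \subset Z$ is Zariski dense in the complex orbit $H(\bC) \cdot z_0 \subset \check{\bD}$. By the semi-algebraicity of $Z$, writing $Z$ as a component of $\hat Z \cap \bD$ for an algebraic $\hat Z \subset \check{\bD}$, the inclusion $\Gamma \cdot z_0 \subset \hat Z$ passes to Zariski closure and gives $H(\bC) \cdot z_0 \subset \hat Z$. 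Combining this with connectedness and a dimension count coming from $\Gamma \backslash Z$ having finite volume yields $Z = H(\R)^0 \cdot z_0$.

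Second, I use horizontality to pin down the Hodge type of $\fh$. The tangent space at $z_0$ identifies with $\fg_{\bC}/F^0\fg_{\bC}$, with horizontal subspace the image of the $(-1,1)$-graded piece $\fg^{-1,1}$. Horizontality of $Z$ therefore gives $\fh_{\bC} \subset F^0\fg_{\bC} + \fg^{-1,1}$, and intersecting with the complex conjugate of this condition (using the real structure on $\fh$) yields
\[
\fh_{\bC} \subset \fg^{-1,1} \oplus \fg^{0,0} \oplus \fg^{1,-1}.
\]
For $z_0$ Hodge-generic in $Z$, the Mumford--Tate group of $z_0$ lies inside $H$, so the Deligne torus at $z_0$ normalizes $\fh_{\bC}$ and the above inclusion is a direct sum decomposition $\fh_{\bC} = \fh^{-1,1} \oplus \fh^{0,0} \oplus \fh^{1,-1}$. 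Together with reductivity of $H^0$, this is exactly the Harish-Chandra decomposition of a Hermitian symmetric Lie algebra: $\fh^{0,0}$ is the complexification of a maximal compact subalgebra $\fk$, and $Z = H(\R)^0 \cdot z_0 \cong H(\R)^0/K$ is a Hermitian symmetric domain $\calD$, equivariantly and holomorphically embedded in $\bD = \bG(\R)/V$, with horizontality built into the construction.

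I expect the main obstacle to be the first step. Writing $Z$ as a single $H(\R)^0$-orbit is in essence an Ax--Lindemann--Weierstrass principle for period domains, transferring Zariski closures between the algebraic compact dual $\check{\bD}$ and its transcendental open subset $\bD$; the hypothesis of semi-algebraicity is crucial in allowing closures taken in $\check{\bD}$ to remain inside $Z$. Extracting reductivity of $H^0$ from the strong quasi-projectivity hypothesis, rather than from semi-algebraicity alone, is the other delicate ingredient, and is where the precise form of Definition \ref{strongqp} must be used. Once these two statements are in hand, the horizontality analysis in the second step is formal.
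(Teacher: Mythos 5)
Your first step's lower bound is sound and coincides with the paper's argument: your $H=\overline{\Gamma}^0$ is the paper's $M_1$, and the paper likewise uses semi-algebraicity of $Z$ together with Zariski density of $\Gamma$ in $M_1(\Cee)$ and transitivity of $M_1(\Cee)$ on $\check D_{M_1}$ to get $\check D_{M_1}\subseteq \hat Z$, hence (a component of) $D_{M_1}\subseteq Z$ (the steps \eqref{claim1}--\eqref{claim2}). The genuine gap is your upper bound $Z\subseteq H(\R)^0\cdot z_0$. ``Connectedness and a dimension count coming from $\Gamma\backslash Z$ having finite volume'' is not an argument: finite volume is not among the hypotheses, and no dimension count is available, since proving $\dim Z=\dim D_{M_1}$ is exactly the content at issue. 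A priori $Z$ could contain positive-dimensional horizontal directions on which $\Gamma$ acts trivially --- say $Z$ fibered over $D_{M_1}$ with semi-algebraic, quasi-projective fibers --- and nothing in your sketch excludes this. The paper excludes it by Hodge theory, not volume: André's theorem (\cite[Theorem 1]{andre}, valid because $S$ is quasi-projective) makes $M_1$ normal in $M'=M_{der}$, giving the almost-direct product $M'=M_1\cdot M_2$ and $D_M\cong D_{M_1}\times D_{M_2}$; after passing to a finite-index subgroup with $\Gamma\cap M_2(\Cee)=\{1\}$ --- and this passage is precisely where \emph{strongly} quasi-projective (Definition \ref{strongqp}) is needed, so the quotient stays quasi-projective --- the projection $\Gamma\backslash Z\to\Gamma\backslash D_{M_1}$ is algebraic (Baily--Borel plus Borel extension), its fibers are quasi-projective and carry a VHS with trivial monodromy, and Schmid's theorem of the fixed part forces the VHS on each fiber to be constant, hence the fibers to be finite. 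Combined with the section coming from $D_{M_1}\subseteq Z$ and irreducibility of $Z$, this yields $Z=D_{M_1}$. Your proposal contains no substitute for this fixed-part step.

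Two secondary corrections. First, you locate the role of strong quasi-projectivity in the wrong place: reductivity (indeed semisimplicity) of $H^0$ is Deligne's semisimplicity theorem for the monodromy of a polarizable VHS (cf.\ \cite[Theorem 6.19(a)]{milne}) and needs no such hypothesis; the hypothesis is consumed by the finite-index reduction just described. Second, your normalization claim is backwards: the Mumford--Tate group of a Hodge-generic $z_0$ \emph{contains} $H$ (Deligne gives $H\subseteq M_{der}$), it does not lie inside it; so to conclude that $\mathrm{Ad}\,h(U(1))$ preserves $\fh_\bC$ and grades it as $\fh^{-1,1}\oplus\fh^{0,0}\oplus\fh^{1,-1}$ you again need André's normality of $H$ in $M_{der}$ (central factors of $M$ act trivially under $\mathrm{Ad}$, so normality in $M_{der}$ suffices). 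With that supplied, your Lie-algebra argument in the second step is correct and is essentially the content of Deligne's criterion \cite{dshimura}, which the paper invokes directly to conclude that $D_{M_1}$ is a Hermitian symmetric domain totally geodesically and horizontally embedded in $\bD$.
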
 

The main ingredients used in the proof are the theorem of the fixed part as proved by Schmid for variations of Hodge structure over quasi-projective varieties,   Deligne's characterization of Hermitian symmetric domains  \cite{dshimura},  and  the recent theory of Mumford--Tate domains as developed by Green--Griffiths--Kerr \cite{mtbook}.   Theorem~\ref{maintheorem}, in the case where $\bD$ itself is Hermitian symmetric (and thus $S$ is a subvariety of a Shimura variety), has been proved independently by Ullmo--Yafaev \cite{ullmo}, using  similar methods.  This result is related in spirit,  but in a somewhat different direction, to  a conjecture of Koll\'ar \cite{kollar}, which says roughly that, if $Z$ is simply connected and semi-algebraic, and $S=\Gamma \backslash Z$ is projective for some discrete group $\Gamma$ of biholomorphisms of $Z$, then $Z$ is the product of a Hermitian symmetric space and a simply connected projective variety.

The remainder of the paper is concerned with  Hodge structures of \textsl{Calabi--Yau type}, in other words effective weight $n$ Hodge structures such that $h^{n,0} = 1$. For Hermitian symmetric spaces of tube type, Gross \cite{bgross} has constructed certain natural variations of Hodge structure of Calabi--Yau type. This construction was extended by Sheng--Zuo \cite{sz} to the non-tube case to construct complex variations of Hodge structure. (We note that the existence of natural variations of Hodge structure of Calabi--Yau type over the exceptional Hermitian symmetric domains was previously noticed by Looijenga \cite{looijengaletter}.) Their methods can easily be adapted to construct real variations of Hodge structure of Calabi--Yau type. In Section~\ref{sectclassify}, we show that all real variations of Hodge structure of Calabi--Yau type over a Hermitian symmetric space can be constructed via standard techniques from those of Gross and Sheng--Zuo. In the spirit of Gross, we classify  real variations of Hodge structure or $\Q$-variations of Hodge structure which remain irreducible over $\R$. More precisely, we show:

\begin{mainthm}\label{thm2intro}
For every irreducible Hermitian symmetric domain of non-compact type $\calD=G(\bR)/K$, there exists a canonical $\bR$-variation of Hodge structure $\calV$ of Calabi--Yau type. Any other irreducible equivariant $\bR$-variation of Hodge structure  of Calabi--Yau type on $\calD$ can be obtained  as a summand  of $\Sym^n\calV$ or $\Sym^n\calV\left \{-\frac{a}{2}\right\}$ (if $\calD$ is not a tube domain), where $\{\ \}$ denotes the half-twist operation (cf.\ \cite{halftwist0}). 
\end{mainthm}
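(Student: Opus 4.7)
The plan is to classify irreducible equivariant $\bR$-variations of Hodge structure on $\calD$ via highest weight theory for $G$, and to identify those of Calabi--Yau type as the Cartan components inside symmetric powers of a distinguished minimal representation. The first step is to recall the standard dictionary between equivariant $\bR$-VHS on $\calD = G(\bR)/K$ and finite-dimensional real representations $\rho \colon G(\bR) \to \GL(V_{\bR})$: the Hodge decomposition at the base point is the eigenspace decomposition of $\rho(Z_0)$, where $Z_0$ is the distinguished central element of $\fk$ cutting out the complex structure on $\calD$. Because $\ad(Z_0)$ acts on $\fp^{\pm}$ by $\pm 1$, Griffiths transversality $\fp^{\pm}\cdot V^{p,q} \subseteq V^{p\pm 1, q\mp 1}$ is automatic, and a polarization is supplied by the invariant form on the self-dual irreducible piece.

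Next I would translate the Calabi--Yau condition into highest-weight language. The top Hodge piece $V^{n,0}$ is the extremal $Z_0$-eigenspace of $V_{\bC}$, so $h^{n,0}=1$ forces a unique extremal weight. Since $Z_0$ pairs nontrivially only with the unique non-compact simple root $\alpha_0$ of the Hermitian Dynkin diagram, this happens exactly when the highest weight $\lambda$ of $V_{\bC}$ satisfies $\langle \lambda, \alpha_i^\vee\rangle = 0$ for every compact simple root $\alpha_i$; equivalently $\lambda = m\varpi_0$ for the fundamental weight $\varpi_0$ dual to $\alpha_0^\vee$ and some $m \geq 1$. Taking $m=1$ produces a canonical complex representation $V(\varpi_0)$; in the tube case this carries a compatible real structure and furnishes the canonical $\calV$. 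Simultaneously, every other irreducible CY-type summand must have highest weight $m\varpi_0$ for some $m \geq 1$.

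The Cartan component of $\Sym^m V(\varpi_0)$ is precisely the irreducible $V(m\varpi_0)$, appearing with multiplicity one. Since $\Sym^m$ is a functor of equivariant VHS and the $Z_0$-grading is multiplicative under symmetric powers, this realizes each such $V(m\varpi_0)$ as a summand of $\Sym^m \calV$ compatibly with the Hodge grading, giving the desired description in the tube case.

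The main obstacle is the non-tube case, where $V(\varpi_0)$ is not self-dual and admits no real structure compatible with the Hodge grading; the Gross--Sheng--Zuo construction then produces only a $\bC$-VHS. To obtain an honest $\bR$-VHS I would apply van Geemen's half-twist $\{-a/2\}$, tensoring with a rank-two weight-one CM Hodge structure so that $V(\varpi_0)$ and its complex conjugate assemble into a real object with Hodge weights shifted by $a/2$. The bulk of the remaining work lies in verifying (i) that the half-twisted object is still of CY-type, (ii) that this new family, together with the symmetric powers of $\calV$, exhausts all irreducible $\bR$-VHS of CY-type on $\calD$, and (iii) that the summand structure of $\Sym^n\calV\{-a/2\}$ matches the Cartan-component picture above. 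This amounts to systematically tracking the Galois descent from $\bC$ to $\bR$ together with the precise shift in Hodge numerics induced by the half-twist.
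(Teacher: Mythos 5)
Your proposal is correct and follows essentially the same route as the paper's proof of Theorem \ref{thmclassify}: the same dictionary between equivariant VHS and representations weighted by the central element $H_0$, the same key lemma that $h^{n,0}=1$ forces the highest weight to be a multiple $m\varpi_i$ of the cominuscule weight (via Weyl reflections in the compact simple roots preserving the $H_0$-eigenvalue), the same realization of $V(m\varpi_i)$ as the Cartan component of $\Sym^m\calV$, and the same half-twist descent in the non-tube case where the cominuscule representation is of complex type. The only cosmetic difference is that you describe the half-twist as tensoring with a rank-two weight-one CM Hodge structure, which is equivalent to the paper's definition by shifting the conjugate eigenspaces $V_\pm$, and the paper additionally pins down the tube-case uniqueness by noting $\End_{\fg}(V_\bR)=\bR$ forces $\Hg=G$, so the only ambiguity in lifting $\varphi$ is a Tate twist excluded by effectivity.
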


The case of weight three is described in detail in \S\ref{listrep} (see especially Corollary \ref{thmclassify3}). In Section \ref{cmsect}, we make some remarks about the more difficult problem of classifying irreducible $\Q$-variations of Hodge structure of Calabi--Yau type.

In Sections \ref{sectequations} and \ref{sectequations2}, we specialize further to the case of weight three and discuss various methods for constructing semi-algebraic variations of Hodge structure, not necessarily strongly quasi-projective. The local study of maximal weight three horizontal variations of Calabi-Yau type (with a certain non-degeneracy condition) dates back to work of Bryant-Griffiths \cite{bg} (see also (\cite{friedmancy} and \cite{voisinmirror}). The method of Section \ref{sectequations} is a global variant of \cite{bg} which gives a construction of semi-algebraic maximal horizontal subvarieties (see \eqref{eqalld2}) with a rational or real structure and with a (rational or real) unipotent  group action on the horizontal subvariety whose general elements are maximally unipotent.
These considerations lead to a homogeneous cubic polynomial $\varphi(z_1, \dots, z_h)$, where $h$ is the dimension of the horizontal subvariety.  However, for a general cubic polynomial $\varphi$, the horizontal subvarieties so constructed will have no symplectic automorphisms other than the unipotent group (Theorem \ref{thmstabilizersmooth}) and so will not be strongly quasi-projective. Additionally, we discuss the Hodge--Riemann bilinear relation in terms of the cubic $\varphi$ (Theorem \ref{thmhr}). In Section \ref{sectequations2}, we give a related   construction  which leads to complex polynomials analogous to $\varphi$, as well as a variant which leads to non-maximal variations (see \eqref{eqallddegen1} and \eqref{eqallddegen3} respectively), which are relevant in the non-tube case. 

In Section \ref{sect5}, we show that the weight three Hermitian symmetric examples can be described by the methods of Section \ref{sectequations2} in general (Theorem \ref{thmexplicit2}) and by those of Section \ref{sectequations} in the tube domain case (Theorem \ref{hermcubiccase}). Perhaps not surprisingly, the realization of Hermitian symmetric domains as horizontal subvarieties of a period domain of Calabi--Yau type is closely  related to the general theory of realizations of these symmetric domains.  Roughly speaking, the methods of Section  \ref{sectequations} correspond to the unbounded realizations of Hermitian symmetric spaces due to Kor\'anyi--Wolf \cite{korwolf}, while those of Section \ref{sectequations2} correspond to the Harish-Chandra embedding of a Hermitian symmetric space as a bounded domain. It is very likely that similar explicit constructions can describe the general weight $n$ case. However, the case of weight three, aside from being the simplest nontrivial case,  also has many connections with other geometric questions. For example, over $\bC$, the classification in the tube domain case is equivalent to the theory of homogeneous Legendrian varieties as studied extensively by Landsberg--Manivel (e.g.\ \cite{lm4}, \cite{lm1}). It is also related to Zak's classification of Severi varieties (see for instance \cite{lazarsfeld}). Finally, both of the exceptional Hermitian symmetric domains (namely type $\EIII$ and $\EVII$) appear as horizontal subvarieties of weight three variations of Hodge structure of Calabi--Yau type.

Lastly, in Section \ref{sectconclude}, we describe some of the interesting Hodge theory in the weight three tube domain case in terms of the cubic form $\varphi$ as well as the Hermitian symmetric space structure. In particular we analyze (1) the locus where the intermediate Jacobian of the weight three Hodge structure is an almost direct product, where one factor is a polarized abelian variety,  and (2) degenerations and limiting mixed Hodge structures. The discussion is not meant to be definitive in any sense.

Although there is a great deal of literature on the subject, we are not concerned in this paper with realizing the Hermitian symmetric  examples geometrically, i.e.\ as variations of Hodge structure associated  to a family of Calabi--Yau or other smooth projective varieties or via some motivic construction beginning with such a family. Some of the known Hermitian type examples of geometric nature include those constructed by  Borcea \cite{borcea} and Voisin \cite{voisincy}, and the more recent examples of  ball quotient type due to Rohde \cite{rohde}  and van Geemen and his coauthors \cite{vangeemen} (for further discussion see \S\ref{geometricex}). Such examples tend to be quite rare: the horizontal subvarieties associated to most geometric examples are far from being equivariantly embedded Hermitian symmetric. For example, if the Zariski closure of the monodromy group is the full symplectic group, or (as Deligne noted) contains monodromy transformations of Picard--Lefschetz type (see   Definition~\ref{defpl}), then the horizontal subvariety is not a Hermitian symmetric space equivariantly embedded in the classifying space. This fact rules out the quintic threefold and its mirror and tends to rule out most complete intersections in toric  varieties as well.  Similarly, Gerkmann et al.\ \cite{zuo} (see also \cite{yoshida} for related results) show that the moduli space of  Calabi--Yau threefolds obtained via double covers of $\bP^3$ branched in $8$ general planes is not Hermitian symmetric. 

\subsection*{Acknowledgements} We thank B. Hassett, M. Kerr, and K. O'Grady for discussions relevant to this  paper. We also thank the referees for useful comments that have helped improve the paper and for pointing out an inaccuracy in an earlier version. Finally, we are grateful to I. Dolgachev for suggesting some additional references and sharing with us some unpublished correspondence with E. Looijenga. 

\subsection*{Convention:} We abbreviate variation of Hodge structure by VHS. All VHS are polarizable/polarized, defined over $\bQ$ unless otherwise specified, and satisfy the Griffiths  transversality condition. A Hodge structure or VHS (of weight $n$) will be assumed to be effective ($h^{p,q} \neq 0$ only for $p, q\geq 0$, $h^{n,0}\neq 0$) unless otherwise noted. By a Tate twist, we can always arrange that a Hodge structure is effective. We denote by $\bD$ a Griffiths period domain. Thus, $\bD = \bG(\R)/\bK$, where $\bG$ is an orthogonal or symplectic group defined over $\Q$ (the group preserving a pair $(V, Q)$, where $V$ is a $\Q$-vector space and $Q$ is a non-degenerate symmetric or alternating form defined over $\Q$) and $\bK$ is a compact subgroup of $\bG(\R)$, not in general maximal.

\section{Semi-algebraic implies Hermitian symmetric}\label{sectalgebraic}
Our goal in this section is to prove Theorem~\ref{maintheorem}.
 
\begin{definition} Let $\bD=\bG(\R)/\bK$ be a classifying space for Hodge structures with compact dual $\check \bD = \bG(\Cee)/\bfP(\Cee)$, where $\bfP(\Cee)$ is an appropriate parabolic subgroup of $\bG(\Cee)$. A closed horizontal subvariety $Z$ of $\bD$ will be called \textsl{semi-algebraic in $\bD$} if $Z$ is an open subset of its Zariski closure $\hat Z\subseteq \check \bD$. Equivalently, there exists a closed subvariety $\hat Z$ of the projective variety $\check \bD$ such that $Z$ is a connected component of $\hat Z \cap \bD$. Note that, if $Z$ is semi-algebraic in $\bD$, then $Z$ is a semi-algebraic set.
\end{definition}

\begin{definition}\label{strongqp} 
Let $\bD=\bG(\R)/\bK$ be a classifying space for Hodge structures as above, and let $Z$ be a closed horizontal subvariety of $\bD$. Let $\Gamma = \Gamma_Z$ be the stabilizer of $Z$ in $\bG({\Zee})$, i.e.\ $\Gamma = \{\gamma \in \bG(\Zee): \gamma(Z) = Z\}$.
Thus $\Gamma$ acts properly discontinuously on $Z$. We call $\Gamma \backslash Z$ \textsl{strongly quasi-projective} if, for every subgroup $\Gamma'$ of $\Gamma$ of finite index, the analytic space  $\Gamma' \backslash Z$ is quasi-projective, and thus the morphism $\Gamma' \backslash Z \to \Gamma \backslash Z$ is a morphism of quasi-projective varieties. In particular, if $\Gamma \backslash Z$ is strongly quasi-projective, then $\Gamma \backslash Z$ is quasi-projective.
\end{definition}

\begin{remark} \begin{itemize} \item[(i)] If $\Gamma$ acts on $Z$ without fixed points and $\Gamma \backslash Z$ is quasi-projective, then by Riemann's existence theorem $\Gamma \backslash Z$ is automatically strongly quasi-projective.
\item[(ii)] If $\bD$ is Hermitian symmetric, so that the quotient of $\bD$ by every arithmetic subgroup admits a Baily-Borel compactification, then it is easy to check that $\Gamma \backslash Z$ is strongly quasi-projective  if and only if $\Gamma \backslash Z$ is quasi-projective.
\end{itemize}
\end{remark}

We can now restate Theorem~\ref{maintheorem} as follows:

\begin{theorem}\label{thmalgebraic}
Let $Z$ be a closed horizontal subvariety of a classifying space $\bD=\bG(\R)/\bK$ for Hodge structures and let $\Gamma=\Stab_Z\cap \bG(\bZ)$ as above. Assume that 
\begin{itemize}
\item[(i)] $\Gamma\backslash Z$ is strongly quasi-projective;
\item[(ii)] $Z$ is semi-algebraic in $\bD$. 
\end{itemize}
Then  $Z$ is a Hermitian symmetric domain $G(\R)/K$, whose embedding  in $\bD$ is an equivariant, holomorphic, horizontal embedding. In other words, $G$ is a closed algebraic subgroup of $\bG$ defined over $\Q$,   $K=\bK\cap G(\R)$ is a maximal compact subgroup, the complex structures on $Z=G(\R)/K$ and $\bD$ are induced by a morphism $U(1)\to G(\R)\subseteq \bG(\R)$, and $Z=G(\R)/K$ is a horizontal submanifold of $\bD$. 
 \end{theorem}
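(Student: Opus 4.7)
The plan is to combine Schmid's theorem of the fixed part and the Green--Griffiths--Kerr theory of Mumford--Tate domains to identify $Z$ with a Mumford--Tate subdomain $D_M \subseteq \bD$, and then to apply Deligne's characterization of Hermitian symmetric domains \cite{dshimura} to this Mumford--Tate domain. First, by Selberg's lemma, replace $\Gamma$ by a finite-index torsion-free subgroup $\Gamma'$; strong quasi-projectivity ensures $S' := \Gamma' \backslash Z$ remains smooth and quasi-projective. The tautological $\bQ$-VHS on $\bD$ pulls back along the immersion $S' \hookrightarrow \Gamma' \backslash \bD$ to a polarizable $\bQ$-VHS on $S'$, which satisfies Griffiths transversality thanks to the horizontality of $Z$. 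Let $M \subseteq \bG$ denote its generic Mumford--Tate group, a connected reductive $\bQ$-algebraic subgroup. Then \cite{mtbook} guarantees that the lifted period map $\widetilde{S'} \to \bD$ factors through the Mumford--Tate subdomain $D_M$. Since the image of this lifted period map is precisely $Z$ (the cover $\widetilde{S'} \to S'$ factors through $Z$), we obtain $Z \subseteq D_M$.

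Second, I invoke Andr\'e's theorem on algebraic monodromy: after possibly replacing $\Gamma'$ by a finite-index subgroup so that the algebraic monodromy group is connected, the Zariski closure (in $\bG$) of the monodromy of the VHS equals $M^{\mathrm{der}}$. In particular, $\Gamma'$ contains a subgroup that is Zariski dense in $M^{\mathrm{der}}$. Let $\hat Z \subseteq \check{\bD}$ be the Zariski closure of $Z$ in the compact dual; by hypothesis (ii), $Z$ is a connected component of $\hat Z \cap \bD$. Since $\Gamma'$ preserves $Z$ as a real-analytic subset, it preserves $\hat Z$ as a $\bQ$-algebraic subvariety of $\check{\bD}$, and hence so does its Zariski closure $M^{\mathrm{der}}$. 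Consequently $M^{\mathrm{der}}(\bR)^+$ preserves the connected component $Z$ of $\hat Z \cap \bD$. But the $M^{\mathrm{der}}(\bR)^+$-orbit of the reference Hodge structure is all of $D_M$, so $Z \supseteq D_M$ and therefore $Z = D_M$.

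Third, I exploit horizontality via Deligne's criterion. Decompose $\fm := \Lie(M)$ into Hodge types under the adjoint action of the reference Hodge circle $h_0 : U(1) \to M(\bR)$; the holomorphic tangent space to $D_M$ at $h_0$ is canonically $\bigoplus_{p<0} \fm^{p,-p}$, sitting inside the ambient $\bigoplus_{p<0} \fg^{p,-p}$ of $\bD$, where $\fg = \Lie(\bG)$. The horizontal subspace is $\fg^{-1,1}$. Horizontality of $Z = D_M$ therefore forces $\fm^{p,-p} = 0$ for $p \leq -2$, and by complex conjugation also for $p \geq 2$, so that $\fm$ carries a Hodge structure of weight $0$ and type $\{(-1,1), (0,0), (1,-1)\}$. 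This is precisely Deligne's criterion \cite{dshimura}: $M$ defines a Shimura datum with $D_M = M(\bR)^+/K_M$ a Hermitian symmetric domain whose complex structure is induced by $h_0$. Taking $G$ to be the appropriate $\bQ$-algebraic subgroup of $\bG$ (so that $\bK \cap G(\bR) = K_M$ is maximal compact) then packages all the stated assertions.

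I expect the main obstacle to lie in the second paragraph: verifying that Andr\'e's theorem yields Zariski density of the monodromy in the full $M^{\mathrm{der}}$ (not merely in a proper normal subgroup) in our setting, and carefully tracking connected components so that the $\bG$-algebraic invariance of $\hat Z$ under $M^{\mathrm{der}}$ upgrades to invariance of the specific real-topology component $Z$ under $M^{\mathrm{der}}(\bR)^+$. One should also dispatch the isotrivial degenerate case at the outset, where $Z$ reduces to a point and the conclusion is vacuous. By contrast, the third paragraph is a direct application of Deligne's well-known characterization, and the first paragraph uses only standard properties of Mumford--Tate domains.
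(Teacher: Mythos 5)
The gap is in your second paragraph, and it is precisely the obstacle you flagged without resolving: Andr\'e's theorem does \emph{not} assert that the Zariski closure of the monodromy equals $M^{\mathrm{der}}$; it asserts only that the connected algebraic monodromy group $M_1=\overline{\Gamma'}^{\,0}$ is a \emph{normal} subgroup of $M^{\mathrm{der}}$ (and already this requires quasi-projectivity of the base, which is where hypothesis (i) first enters). No finite-index shrinking of $\Gamma'$ can repair this, since shrinking only makes the monodromy smaller. Moreover the conclusion you draw, $Z=D_M$, is genuinely false in general, not merely unproved: take $Z=\calD_1\times\{z_2\}$ inside a product Mumford--Tate domain $\calD_1\times\calD_2$, with $z_2$ a very general point of $\calD_2$. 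This $Z$ is closed, horizontal, semi-algebraic, and $\Gamma\backslash Z\cong\Gamma_1\backslash\calD_1$ is strongly quasi-projective by Baily--Borel, yet the monodromy is Zariski dense only in the first factor while $M^{\mathrm{der}}$ also contains the (possibly nonabelian) derived Mumford--Tate group of the Hodge structure $z_2$; so $Z=D_{M_1}\subsetneq D_M$. Run with the correct group $M_1$, your density argument yields only one inclusion, $D_{M_1}\subseteq Z$ (up to connected components), leaving $D_{M_1}\subseteq Z\subseteq D_M$ with the upper bound possibly strict.

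Closing this gap is the actual content of the paper's proof, and it is exactly where the ingredients you listed in your plan but never used come in. Since $M_1$ is normal and $M'=M^{\mathrm{der}}$ is semisimple, one writes $M'$ as an almost direct product $M_1\cdot M_2$, and after passing to adjoint forms gets $D_M\cong D_{M_1}\times D_{M_2}$; strong quasi-projectivity is then used in its full force to replace $\Gamma$ by a finite-index subgroup with $\Gamma\cap M_2(\Cee)=\{1\}$ while keeping $\Gamma\backslash Z$ quasi-projective. The projection $\pi\colon\Gamma\backslash Z\to\Gamma\backslash D_{M_1}$ is a morphism of quasi-projective varieties (Baily--Borel plus Borel's extension theorem), each fiber $F=Z\cap(\{z_1\}\times D_{M_2})$ is quasi-projective and carries a VHS with trivial monodromy, and Schmid's theorem of the fixed part --- announced in your opening sentence but absent from your argument --- forces that VHS to be constant, so $F$ is finite. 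Since $\pi$ has a section coming from $D_{M_1}\subseteq Z$ and $Z$ is irreducible, $Z=D_{M_1}$. Your first paragraph ($Z\subseteq D_M$ via Mumford--Tate theory) and your third (horizontality forcing $\fm^{p,-p}=0$ for $|p|\ge 2$ and Deligne's criterion) are correct and match the paper, except that Deligne's criterion must be applied to $D_{M_1}$, not to $D_M$.
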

\begin{proof}
The embedding $Z\subseteq \bD$, induces a variation of Hodge structures $\sV$ on $S:=\Gamma\backslash Z$ with associated monodromy group $\Gamma$. Consider the generic Mumford--Tate group $M$ associated to $\sV$ and the derived subgroup $M'=M_{der}$.  Let $M_1=\overline{\Gamma}^0$ be the neutral connected component of the  Zariski closure of the monodromy group $\Gamma$. Note that $M_1$ is an algebraic subgroup of $\bG$ such that $M_1(\Cee)$ is invariant under $\operatorname{Gal}(\Cee/\bQ)$ and hence $M_1$ is defined over $\bQ$. By a theorem of  Deligne (see e.g.\ \cite[Theorem 6.19(a)]{milne}), $M_1\subseteq M'$. Since $S$ is quasi-projective, a theorem of Andr\'e (\cite[Theorem  1]{andre}) implies that $M_1$ is in fact a normal subgroup of $M'$.  Since $M'$ is   semi-simple,   $M'$  is an almost direct product of $M_1$ and another semi-simple subgroup $M_2\subseteq M'$, isogenous to $M'/M_1$. Without loss of generality,  possibly after passing to a  subgroup of $\Gamma$ of finite index, we can assume that 
\begin{itemize}
\item[i)] $\Gamma\subseteq M_1(\Cee)\cap \bG(\bZ)$ is a subgroup of  finite index, and
\item[ii)] $\Gamma\cap M_2(\Cee)=\{1\}$.   
\end{itemize}
By construction,  $\Gamma$ is a Zariski dense arithmetic subgroup of $M_1(\Cee)$. Note also that the real group $M_1(\R)$ is the intersection of $M_1(\Cee)$ with $\bG(\R)$.

Let $D_M\subseteq \bD$ be the associated Mumford--Tate subdomain, i.e.\ $D_M=M'(\R)\cdot z_0$, the $M'(\R)$-orbit of a general point $z_0\in Z$ (see \cite[Chapter II.B]{mtbook}). Then $D_M$ is a  homogeneous complex submanifold of $\bD$. Moreover, the Zariski closure of $D_M$ in $\check \bD=\bG(\bC)/\bfP(\bC)$ is the homogeneous space 
$$\check D_M=M'(\bC)/(M'(\bC)\cap \bfP(\bC))=M'(\bC)\cdot z_0,$$ 
and $D_M$ is an open subset of $\check D_M$ (\cite[Remark  on p. 56]{mtbook}, \cite[Prop. VI.B.11]{mtbook}).   Since $M$ is the generic Mumford--Tate group, it follows that $Z\subseteq D_M$ (\cite[\S6.15]{milne}, \cite[Theorem  III.A.1 (Step 3)]{mtbook}). Clearly, the Zariski closure $\hat Z$ of $Z$ in $\check \bD$ will then satisfy $\hat Z\subseteq \check D_M$. 

Let $z_0\in Z\subseteq D_M$ be a general reference Hodge structure. Since $M_1\times M_2\to M'$ is an isogeny, at the level of Lie algebras we have $\fm'=\fm_1\oplus \fm_2$. For the reference Hodge structure on $\fm'$ induced by $z_0$, the subalgebras $\fm_1$ and $\fm_2$ are sub-Hodge structures. Define $D_{M_i}=M_i(\R)\cdot z_0\subseteq \bD$ and similarly for $\check D_{M_i}\subseteq \check \bD$. 
 By arguing as in the proof of \cite[Theorem  III.A.1 (Step 3)]{mtbook} (see also \cite[Theorem  IV.F.1]{mtbook}), we conclude that the spaces $D_{M_i}$ and $\check D_{M_i}$ are complex submanifolds of $\check \bD$ and that the natural holomorphic map $\check D_{M_1}\times \check D_{M_2}\to \check D_M$ is a finite covering space. By passing to the adjoint forms of $M$ and hence $M_1$ and $M_2$, we may further assume that $\check D_{M_1}\times \check D_{M_2}\cong \check D_M$, and similarly that $D_{M_1}\times  D_{M_2}\cong D_M$. This has the effect of replacing $Z$ (and $\hat{Z}$) by a finite quotient  by a subgroup of the center of $M_1$, and it is clearly enough to prove the theorem for such a finite quotient.
 
  We first claim that the algebraic assumption on $Z$ gives 
\begin{equation}\label{claim1}
\hat Z\cap \check D_{M_1} =\check D_{M_1}.
\end{equation}
In fact, $\hat Z':=\hat Z\cap \check D_{M_1}$ is 
 an intersection of two closed algebraic subvarieties in $\check \bD$, and thus it is a closed algebraic subvariety in $\check \bD$. By assumption,  $\Gamma\cdot Z  = Z$. Hence $\Gamma\cdot \hat{Z}  = \hat{Z}$ and so $\Gamma$ preserves $\hat Z'$. Since $\hat Z'$ is algebraic, the  Zariski closure $\overline{\Gamma}^0(\bC)$ also stabilizes $\hat Z'$. Since $\overline{\Gamma}^0(\bC)=M_1(\bC)$ acts transitively on $\check D_{M_1}$,  \eqref{claim1} follows. 

By taking the intersection with $\bD$,  \eqref{claim1} implies that 
\begin{equation}\label{claim2}
D_{M_1}\subseteq Z,
\end{equation}
or at least that a connected component of $D_{M_1}$ is contained in $Z$. Hence the induced variation of Hodge structure on $D_{M_1}$ satisfies Griffiths transversality.  It then follows from a result of Deligne \cite{dshimura} that $D_{M_1}$ is a Hermitian symmetric domain (with a totally geodesic embedding in $\bD$) (see also \cite[Theorem  7.9]{milne}). To complete the proof, we will show that equality holds in \eqref{claim2}. 

Note that $Z\subseteq D_{M_1}\times D_{M_2}$ induces a projection morphism
$$\pi\colon\Gamma\backslash Z\to \Gamma\backslash D_{M_1}.$$  
Since $\Gamma$ is an arithmetic subgroup of $M_1$,  $\Gamma\backslash D_{M_1}$ is quasi-projective (by the Baily--Borel theorem) and the morphism $\pi$ is algebraic (by Borel's extension theorem). Furthermore, \eqref{claim2} gives a section of this map over the connected component of $\Gamma\backslash D_{M_1}$ containing the image of $\pi$. 

Let $F$ be a  fiber  of the morphism $\pi\colon\Gamma\backslash Z\to \Gamma\backslash D_{M_1}$. By the discussion of the previous paragraph, $F$ is a quasi-projective variety. On the other hand, 
$F=Z\cap \left(\{z_1\}\times D_{M_2}\right)$ for some $z_1\in D_{M_1}$. We conclude that $F$ is quasi-projective and carries a VHS with trivial monodromy (since $\Gamma\cap M_2(\Cee)=\{1\}$, since we have replaced $\Gamma$ by a suitable subgroup of finite index). The theorem of the fixed part as proved by Schmid then implies that the VHS on $F$ is constant.  Hence $F$ is  a finite set of points. We conclude that $\pi\colon \Gamma\backslash Z\to \Gamma\backslash D_{M_1}$ is a quasi-finite map of quasi-projective varieties.  It also has a section given by \eqref{claim2}. But then $D_{M_1}$ is a component of $Z$. Since $Z$ was assumed irreducible, it follows that $Z=D_{M_1}$ is a Hermitian symmetric domain as described in the statement of Theorem~\ref{thmalgebraic}.
\end{proof}

\begin{remark}  (i) In general, a finite ramified cover of a quasi-projective variety need not be quasi-projective, or even compactifiable (i.e.\ embedded in a compact analytic space as the complement of a closed analytic subset). However, we know of no examples of a  closed horizontal subvariety $Z$ of a classifying space $\bD$ such that $\Gamma\backslash Z$ is quasi-projective and such that there exists a finite cover $\Gamma' \backslash Z$ of $\Gamma\backslash Z$ as in Definition~\ref{strongqp} which is not quasi-projective. It is thus natural to ask if, in the statement of Theorem~\ref{thmalgebraic}, it is sufficient to assume only that $\Gamma\backslash Z$ is  quasi-projective.

\smallskip
\noindent (ii) More generally, suppose that, instead of assuming that $\Gamma\backslash Z$ is strongly quasi-projective, we assume  $\Gamma\backslash Z$ is \textsl{strongly compactifiable}, in other words that for every subgroup $\Gamma'$ of finite index in $\Gamma$, $\Gamma' \backslash Z$ is compactifiable, in such a way that the morphism $\Gamma' \backslash Z \to \Gamma \backslash Z$ extends to a meromorphic map on the compactifications. The proof of Theorem~\ref{thmalgebraic} then carries over to this case given the full strength of Schmid's result, that the theorem of the fixed part holds in this case and noting that Andr\'e's theorem on the monodromy only uses the fixed part assumption (see \cite[Theorem  6.19 ii)]{milne}). More precisely, in this case $Z$ is Hermitian symmetric, $\Gamma$ is an arithmetic subgroup of the corresponding Lie group $G(\R)$, the quotient $\Gamma\backslash Z$ is quasi-projective and a compactification of $\Gamma\backslash Z$ as above is unique up to bimeromorphic equivalence (via Borel's extension theorem). 

\smallskip
\noindent (iii)  There are also variants of Theorem~\ref{thmalgebraic} where we assume that $\Gamma\backslash Z$ is the image of a quasi-projective variety $S$ under a proper morphism, such that all finite covers $S\times _{\Gamma\backslash\bD}(\Gamma'\backslash\bD)$ are quasi-projective, and similarly where we just assume that $S$ and its related finite covers are compactifiable. In case $S$ is quasi-projective and the morphism $S\to \bG({\Zee})\backslash \bD$ comes from algebraic geometry, i.e.\ is induced by taking the Hodge structures on the primitive cohomology of a family $\pi \colon\mathcal{X} \to S$ of smooth projective varieties, or from a related construction, then the finite covers $S\times _{\Gamma\backslash\bD}(\Gamma'\backslash\bD)$ of $S$ are all \'etale, and hence quasi-projective, by the Riemann existence theorem, and so the proof of Theorem~\ref{thmalgebraic} applies in this situation.

\smallskip
\noindent (iv)  Finally,  one can ask if  Theorem~\ref{thmalgebraic} extends to the case where  $\Gamma\backslash Z$ is only assumed to have finite volume.
\end{remark}

\section{Classification of the Hermitian VHS of Calabi--Yau type}\label{sectclassify}
In this section, we classify the  VHS parametrized by Hermitian symmetric domains as in Theorem \ref{thmalgebraic} for Hodge structures of Calabi--Yau type.  A similar classification problem is the classical Shimura case, namely the classification of VHS of weight $1$ (polarized  abelian varieties) parametrized by Hermitian symmetric domains due to Satake  \cite{satake} and Deligne \cite[\S1.3]{dshimura} (see also \cite[Chapter 10]{milne}). Some examples of Hermitian VHS of  Calabi--Yau type were constructed by Gross \cite{bgross} and Sheng--Zuo \cite{sz}. Here, we complete the classification in the Calabi--Yau case (Theorem \ref{thmclassify} and Corollary \ref{thmclassify3} for weight $3$) by showing that all Hermitian VHS of Calabi--Yau type are derived from the examples of Gross and Sheng--Zuo. 

\begin{definition}\label{defherm}
Let $\bD$ be a classifying space of (polarized) Hodge structures. We say that a horizontal subvariety $\calD\hookrightarrow \bD$ is of \textsl{Hermitian type} if $\calD$ is a Hermitian symmetric domain of non-compact type embedded in $\bD$ via an equivariant, holomorphic, horizontal embedding. When $\calD\subset \bD$ is of Hermitian type, the induced variation of Hodge structures $\sV$ on $\calD$ is called \textsl{a Hermitian VHS}. 
\end{definition}
\begin{remark}
The Hermitian VHS are the VHS parametrized by Hermitian symmetric domains considered by Deligne \cite{dshimura}. Also, in the terminology of \cite{mtbook}, a subvariety $\calD\subset \bD$ of Hermitian type is the same thing as a  Mumford--Tate domain which is Hermitian symmetric and unconstrained. 
\end{remark}

\begin{definition}\label{defcy}
A \textsl{Hodge structure $V$ of Calabi--Yau (CY) type} 
is an effective weight $n$  Hodge structure such that $V^{n,0}$ is $1$-dimensional. If $n=1$, we say that that $V$ is \textsl{of elliptic curve type}, and if $n=2$ we say that $V$ is \textsl{of $K3$ type}.
\end{definition}

After some preliminaries on Hermitian symmetric domains and Hermitian VHS, we classify the Hermitian VHS of CY type in   \S\ref{proofclassify}. A detailed discussion of the Hodge representations that occur in this classification is   done in \S\ref{listrep}. Finally, in \S\ref{geometricex}, we review the known examples of Hermitian VHS of CY type coming from geometry. We will discuss some related rationality questions in Section \ref{cmsect}.

\subsection{Preliminaries on Hermitian VHS}\label{sectprelim}
We fix $\calD=G(\bR)/K$ an irreducible Hermitian symmetric domain of non-compact type and $z_0\in \calD$ a reference point. 
Let $\bar G=G/Z(G)$  be the adjoint form of $G$, and $\bar K\subset \bar G$  the corresponding compact subgroup. The choice of a reference point $z_0\in \calD$ determines the maximal compact subgroup $\bar K$ with  center $U(1)$. In particular, the choice of a reference point gives a cocharacter (which we fix throughout):
\begin{equation*}\varphi\colon U(1)\to \bar G.
\end{equation*} 

 Let $\fg$ be the corresponding (real) Lie algebra, $\fg=\fk\oplus\fp$ the Cartan decomposition, and $H_0$ the differential of $\varphi$. Note that $H_0$ spans the center $\mathfrak z$ of $\fk$, $\fp$ is identified with the tangent space of $\calD$ at $z_0$, and the $\ad(H_0)$ action on $\fp$ gives the complex structure (e.g.\ \cite[Theorem  7.117]{knapp}). We fix a compact Cartan subalgebra $\fh_0\subset \fk$ with $H_0\in \fh_0$. Let $\fh=\fh_0\otimes \bC\subset \fg_\bC$ be the Cartan subalgebra, and let $R$ be the associated root system. In this situation, the roots are either  compact or non-compact. Fix a good ordering of the roots (e.g.\ \cite[p. 441]{knapp}), and let   $\alpha_1, \dots, \alpha_d$ be the simple roots. Then there exist a single simple non-compact root $\alpha_i$ (\cite[p. 449]{knapp}). Thus, up to a factor of $2\pi \sqrt{-1}$ that we ignore in what follows,
\begin{equation}\label{mucond}
\left\{
\begin{matrix}
\alpha_i(H_0)=1,\\ 
\alpha_j(H_0)=0,& \textrm{if } j\neq i
\end{matrix}
\right.
\end{equation}
for simple roots. The root $\alpha_i$ is  a special root, in the following sense:

\begin{definition}\label{specialcomin} Let $R$ be an irreducible root system with a choice of simple roots $\alpha_1, \dots, \alpha_d$ and highest root $\tilde\alpha$, and write $\tilde \alpha = \sum_in_i\alpha_i$. Then $\alpha_i$ is a  \textsl{special root} if $n_i=1$. In fact,  the choice of $\calD$ is equivalent  to a root system $R$ and the choice of a special root $\alpha_i$  (e.g.\ $(R,\alpha_i)$ gives the Vogan diagram of the real Lie algebra $\fg$; see also \cite[Chapter 2]{milne}). If $\alpha_i$ is a special root and $\varpi_i$ is the corresponding fundamental weight (i.e.\ $\varpi_i(\alpha_j\spcheck) = \delta_{ij}$), then we call $\varpi_i$ a \textsl{cominuscule weight} and  the corresponding fundamental representation $V_{\varpi_i}$ of $G(\bC)$ with highest weight $\varpi_i$ a \textsl{cominuscule representation}.
\end{definition}

\begin{remark}\label{remminuscule}
The relevance of the cominuscule weights in the classification of Hermitian symmetric domains can also be understood as follows: The non-compact Hermitian symmetric domains $\calD=G(\bR)/K$ are in one to one correspondence with their compact duals $\check \calD=G(\bC)/P_0(\bC)$.  Then, the minimal homogeneous embedding $\check \calD$ in a projective space is given  by $\check \calD=G(\bC)\cdot[v]\hookrightarrow \bP(V_{\varpi_i})$, where $v$ is a highest weight vector in the cominuscule representation $V_{\varpi_i}$. In this picture, the parabolic subgroup $P_0(\bC)$ is just $\Stab_{G(\bC)}(v)$ (see also Section \ref{sect5}).
\end{remark}

 An embedding  $\calD\subset \bD$ as in  Theorem \ref{thmalgebraic} is induced by a representation 
\begin{equation*}
\rho\colon G\to \GL(V)
\end{equation*}
and a compatible polarization  $Q$ on $V$, so that $\rho(G)\subseteq \bG=\Aut(V,Q)$. Typically a compatible polarization exists and it is unique (see \cite[\S IV.A, (Step 4)]{mtbook}); thus the issue of polarization does not play a major role here. 
For our purposes, there is no loss of generality in assuming that $V$ is   irreducible  over $\bQ$. In what follows, we will assume that $V$ remains an irreducible representation over $\bR$ as well. We will discuss  the general case in Section \ref{cmsect}. Specifically,  we assume:

\begin{convention}\label{conventionired} 
$G_\bR$ is an almost simple and simply connected algebraic group of Hermitian type. We assume  that the group $G$ and the representation $\rho$ are defined over $\bQ$ and that the induced real representation $\rho\colon  G_\bR\to \GL(V)_\bR$ is irreducible.  
\end{convention}

\subsubsection{Hodge representations}\label{existHodgereps} Following Deligne \cite{dshimura} (see also \cite[Chapter 10]{milne} and \cite[Chapter IV]{mtbook}), a necessary and sufficient condition for the representation $\rho$ to arise from a VHS is that there exists a reductive group $M\subset \GL(V)$ defined over $\bQ$ (i.e.\ the generic Mumford--Tate group of the VHS) and a morphism of algebraic groups:
$$h\colon \bS\to M_\bR\subset \GL(V)_\bR$$
(where $\bS=\Res_{\bC/\bR}(\Gm)$ is the Deligne torus) such that: 
\begin{itemize}
\item[i)] $h$ defines a Hodge structure on $V$. 
\item[ii)] The representation $\rho$ factors through $M$ and $\rho(G)=M_{der}$. 
 \item[iii)] The induced morphism 
\begin{equation*}
\bar{h}\colon  \bS/\Gm \to M_{ad,\bR}=\bar G
\end{equation*}
 is conjugate to the cocharacter $\varphi\colon U(1)\to \bar G$ (i.e. $h$ is a lift of $\varphi$). 
 \end{itemize}

In the situation considered here, one has $\rho(G)\subset \SL(V)$ and thus  it suffices to restrict to the subgroup $\Hg=M\cap \SL(V)$ (thought of as the generic Hodge group of $\calV$). By abuse of notation, we still denote by $h\colon \U(1)\to \Hg_\bR$ the restriction $h_{\mid \U(1)}$,  where  $\U(1)\hookrightarrow \bS$ is the kernel of the norm map $\bS=\Res_{\bC/\bR}(\Gm)\to \Gm$. The Hodge decomposition on $V_\bC$  is simply the 
weight decomposition of $V_\bC$ with respect to $\U(1,\bC)\cong \bC^*$: $V^{p,q}$ corresponds to the eigenspace for the character $z^{p-q}$. Finally, note that $\Gm(\bR)\cap \U(1,\bR)=\{\pm 1\}$ as subgroups of $\bS(\bR)=\bC^*$, giving the following diagram:
\begin{equation}\label{diag1}
\xymatrix
{ 
 h\colon \bC^*\ \ \ \ar@{->}[r]\ar@{->}[d]^{2:1}&\Hg(\bC)\ar@{->}[d]\ar@{->}[r]&\GL(V_\bC)\\
\varphi\colon \bC^*\ \ \ \ar@{->}[r]&\bar G(\bC)\\
}.
\end{equation} 

\begin{remark}\label{rematate}
Since the representation $V$ is assumed irreducible, the lift of $\varphi$ to $M$ (versus $\Hg$) is equivalent to a Tate twist. However, with our definition of CY type (Definition  \ref{defcy}), no Tate twist is allowed.
\end{remark}

\subsubsection{Computation of the Hodge numbers} Modulo a factor of $2$ (explained by \eqref{diag1}), the computation of the weights of $h$ on $V_\bC$ can be done at the level of Lie algebras. Since $\Hg$ is reductive and $\rho(G)$ is the derived subgroup, we have 
\begin{equation}\label{liehodge}
\hg:=\Lie(\Hg)\cong \fg\oplus \fa,
\end{equation}
for some abelian Lie algebra $\fa$. 

Recall that a real irreducible representation $V_\bR$ is of three possible types: \textsl{real type}, \textsl{complex type}, or \textsl{quaternionic type} depending on the behavior of the complexification:
\begin{equation*}
V_\bC=\left\{
\begin{array}{ll}
V_{+}&\textrm{real type}\\
V_{+}\oplus V_{-},\ V_{+}\not\cong V_{-}&\textrm{complex type}\\
V_{+}\oplus V_{-},\ V_{+}\cong V_{-}&\textrm{quaternionic type}
\end{array}
\right.,
\end{equation*}
where $V_\pm$ are irreducible $G(\bC)$-representations and $V_\pm$  are dual representations (i.e.\ $V_+\spcheck\cong V_{-}$). Since 
\begin{equation}\label{comend}
\End_{\fg}(V_\bR)=\left\{
\begin{array}{ll}
\bR&\textrm{real type}\\
\bC&\textrm{complex type}\\
\bH&\textrm{quaternionic type}
\end{array}
\right.
\end{equation}
(e.g.\ \cite[p.\ 89]{mtbook}), we conclude that the abelian part $\fa$ is  trivial in the real case and at most one dimensional in the  complex or quaternionic case. Furthermore, in the complex or quaternionic case, when $V_\bC=V_{+}\oplus V_{-}$,  a generator of the one-dimensional $\fa$ corresponds to the obvious $\bC^*$-action: $t\in\bC^*$ acts as scaling by $t$ on $V_+$ and scaling by $t^{-1}$ on $V_{-}$. 

Since the projection of the differential of $h$ on the $\fg$ factor (see \eqref{liehodge}) is $H_0$, we conclude that in the real type case, the weights of $h$ on $V_\bC$ are: 
\begin{equation}\label{weightsreal}
\left\{ 2\varpi(H_0)\mid \varpi\in \sX(V_+)\right\},
\end{equation}
where  $\sX(V_+)$ denotes the weights of the irreducible $G(\bC)$-representation $V_+$ (N.B. $V_+\cong V_\bC$ in this case). In the complex or quaternionic case, if $\fa$ is $1$-dimensional, the weights can all be  shifted by a constant $c$. Since the weights of $V_+$ and $V_-$ are opposite and the shifts act  in opposite directions, we conclude that in the complex and quaternionic cases  the weights of $h$ on $V_\bC$ are: 
\begin{equation}\label{weightscx}
\left\{ \pm2(\varpi(H_0)-c)\mid \varpi\in \sX(V_+)\right\}
\end{equation}
for some constant $c$. In this situation, $c$ and $\varpi(H_0)$ can be (and typically are) rational numbers (with denominator dividing $2d$, where $d$ is the connection index of the root system $R$). At the same time, the difference $\varpi(H_0)-c$ is a half-integer.
This is explained by the fact that one might need to pass to a finite cover to lift $h$ to 
$$\tilde h\colon \bC^*\to G(\bC)\times\bC^*\xrightarrow{(\rho,\chi)}\GL(V_\bC)$$
so that it fits in the diagram  (extending \eqref{diag1}):
\begin{equation}\label{finitecover}
\xymatrix@!0{
& \bC^* \ar@{->}[rr]\ar@{->}'[d][dd]
& &G(\bC)\times \bC^* \ar@{->}'[d][dd]\ar@{->}[rrrd]^{(\rho,\chi)}
\\
\bC^* \ar@{<-}[ur]\ar@{->}[rr]\ar@{->}[dd]_{2:1}
& & \Hg(\bC) \ar@{<-}[ur]\ar@{->}[dd]\ar@{->}[rrrr]&&&&\GL(V_\bC)
\\
& \bC^* \ar@{->}'[r][rr]
& &  G(\bC)\ar@{->}[rrru]_{\rho}
\\
\bC^* \ar@{->}[rr]\ar@{<-}[ur]^>>{d:1}
& &\bar G(\bC) \ar@{<-}[ur]
}.
\end{equation}

\subsubsection{Half twists}
Gross \cite{bgross} has constructed an $\bR$-VHS of CY type on each of the Hermitian symmetric domains that are of tube type. Similarly, for the domains which are not of tube type, Sheng and Zuo \cite{sz} have constructed $\bC$-VHS of CY type. As we will see later, these examples can be understood in a uniform way. For now, we only note that the $\bC$-VHS cases can be modified to give rise to an $\bR$-VHS by the following recipe: Given an $\bR$-Hodge structure $V$ and a direct sum decomposition  $V_\bC=V_{+}\oplus V_{-}$ of $\bC$-Hodge structures such that  $\overline{V_{+}}=V_{-}$, one obtains $\bC$-Hodge structures on $V_{\pm}$ that are conjugate (i.e.\  $\overline{V_+^{p,q}}=V_{-}^{q,p}$). Conversely, given two conjugate $\bC$-Hodge structures $V_{+}$ and $V_{-}$, their direct sum $V_{+}\oplus V_{-}$ is naturally an $\bR$-Hodge structure. For example, as we shall see in Definition~\ref{weakCMdef},  given a Hodge structure $V$ of weight $n$ with weak CM by a CM field $E$ (and a choice of CM type for $E$), there is a natural (eigenspace) decomposition $V_\bC=V_{+}\oplus V_{-}$ such that  $\overline{V_{+}}=V_{-}$. In this situation, van Geemen \cite{halftwist0} noted that the Hodge structure on $V$ can be modified by a \textsl{half-twist} (see also \cite[\S1.4]{halftwist0} and \cite[Definition  V.B.1(v)]{mtbook}):

\begin{definition}
Suppose that $V_{\pm}$ are two conjugate $\bC$-Hodge structures as above and that $V$ is the corresponding $\bR$-Hodge structure. Given $a\in \Zee$, we define $V\left\{-\frac{a}{2}\right\}$ to be  the (not necessarily effective) Hodge structure of weight $n+a$ obtained by shifting the weights of $V_{\pm}$  (or equivalently twisting by a certain character) as follows:
\begin{equation}\label{halftwist}
V\left\{-\frac{a}{2}\right\}:=V_{+}\left\langle -\frac{a}{2}\right| \oplus V_{-}\left|-\frac{a}{2}\right\rangle,
\end{equation}
where for a  $\bC$-Hodge structure $W=\bigoplus_{p+q=k} W^{p,q}$, we define the left and right shifts by 
\begin{equation}\label{shift}
W\left\langle c\right|:=\bigoplus_{(p-2c)+q=k-2c} W^{p,q}, \textrm{ and resp. } W\left| c\right\rangle:=\bigoplus_{p+(q-2c)=k-2c} W^{p,q},
\end{equation}
i.e.\ $\left(W\left\langle c\right|\right)^{r,s}=W^{r+2c,s}$ and $\left(W\left| c\right\rangle\right)^{r,s}=W^{r,s+2c}$. The \textsl{half-twist} then corresponds to the case $a=1$. For a general $a\in \bZ$, we shall refer to $V\left\{-\frac{a}{2}\right\}$ as an \textsl{iterated half-twist}.
\end{definition}

\begin{remark}
One should not confuse $V\{c\}$ (or the shifts  $\langle\ |$ and $|\ \rangle$) with the Tate twist $V(c)$. In fact, applying a half-twist twice does not give a Tate twist, but applying a half-twist, followed by   complex conjugation (i.e.\ switch $V_+$ and $V_{-}$), followed by another half-twist, is the same as a Tate twist. Also note that the half-twist $\{\ \}$ and the Tate twist $(\ )$ are applied to $\bR$-Hodge structures, while the shifts $\langle\ |$ and $|\ \rangle$ are only applied to $\bC$-Hodge structures. For all four operations, the sign convention is the same as that for a Tate twist: the twist by $c\in \bZ$ (or $\frac{1}{2}\bZ$ respectively) decreases the weight by $2c$ for $\{\ \}$, $(\ )$, $\langle\ |$, and $|\ \rangle$.
\end{remark}

\begin{convention}\label{fractionalshift}
For a rational number $p/q$, the notations $W\left\langle \frac{p}{q}\right|$ and $W\left| \frac{p}{q}\right\rangle$ will have the same meaning as in \eqref{shift}, but should be understood in the context of Diagram \eqref{finitecover}. Namely, $\bC^*$ is  a one-parameter subgroup of  $G(\bC)\times_{\mu_q} \bC^*$, and the shift is by the character $\chi(t)=t^{-p}$ as in the diagram.
\end{convention}

\subsection{Classification of Hermitian VHS of CY type}\label{proofclassify}
We now specialize the discussion to the Calabi-Yau case. First, we note the following property for the cominuscule representation associated  to the Hermitian symmetric domain $\calD$.

\begin{lemma}\label{reptype}
Let $\calD=G(\bR)/K$ be a Hermitian symmetric domain of non-compact type, and $\varpi_i$ the associated cominuscule weight. As a $G_\bR$-representation the cominuscule representation $V_{\varpi_i}$ is of real type for the following cases  (see also table \ref{tabledomains} in \S\ref{listrep} for labeling): $\I_{n,n}$ $(A_{2n-1},\alpha_n)$, $\IV_{2n-1}$ $(B_n,\alpha_1)$, $\III_n$ $(C_n,\alpha_n)$, $\IV_{2n}$ $(D_n,\alpha_1)$, $\II_{2n}$ $(D_{2n},\alpha_{2n})$, $\EVII$ $(E_7,\alpha_7)$. Otherwise, $V_{\varpi_i}$ is of complex type. In particular, the quaternionic case does not arise.
\end{lemma}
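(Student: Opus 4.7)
My plan is a case-by-case verification through the classification of irreducible Hermitian symmetric domains in Table \ref{tabledomains}. I would use the standard criterion: restricted to the real form $G_\bR$, an irreducible $G_\bC$-representation $W$ is of complex type iff $W\not\cong W^*$ as $G_\bC$-representations; if self-dual, $W$ is of real or quaternionic type according to whether it admits a $G_\bR$-invariant real or quaternionic structure.

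\textbf{Step 1 (self-duality).} Since $V_{\varpi_i}\cong V_{\varpi_i}^*$ iff $-w_0(\varpi_i)=\varpi_i$, I would apply the standard formulas for the longest Weyl element $w_0$ in each simple type: $w_0=-\Id$ for $B_n$, $C_n$, $D_{2n}$ and $E_7$, while $w_0$ acts as $-1$ composed with the nontrivial diagram involution for $A_{n-1}$ (so $w_0(\varpi_k)=-\varpi_{n-k}$), for $D_{2n+1}$ (swapping $\varpi_{2n}\leftrightarrow\varpi_{2n+1}$), and for $E_6$ (swapping $\varpi_1\leftrightarrow\varpi_6$). Applying this to the cominuscule weights in Table \ref{tabledomains}, the non-self-dual cases are exactly $\I_{p,q}$ with $p\neq q$, $\II_{2n+1}$, and $\EIII$; in these cases $V_\bC=V_+\oplus V_-$ with $V_+\not\cong V_-$, so the representation is of complex type. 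The remaining cases are precisely those listed as being of real type in the statement, so it remains to exclude quaternionic type in each of them.

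\textbf{Step 2 (real vs.\ quaternionic).} For each self-dual case, I would exhibit an explicit $G_\bR$-invariant real structure on $V_{\varpi_i}$ via a standard realization of the real form. For $\IV_{2n-1}$ and $\IV_{2n}$ the cominuscule representation is the defining representation of $\SO(2,m)$ on $\bR^{m+2}$, which is manifestly of real type. For $\III_n$ the group $\Sp(2n,\bR)$ acts on $\bR^{2n}$, and the cominuscule representation appears as a summand of $\Lambda^n\bR^{2n}$. For $\I_{n,n}$ the group $\SU(n,n)$ preserves a Hermitian form of signature $(n,n)$ on $\bC^{2n}$ admitting a real polarization (via a Cayley transform), yielding a $G_\bR$-invariant real form of $\Lambda^n\bC^{2n}$. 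For $\II_{2n}$ the group $\SO^*(4n)$ acts on the half-spin representation, whose real form may be constructed from the quaternionic realization of this group. Finally, for $\EVII$ one appeals to the classical fact that the $56$-dimensional minuscule representation of the real form $E_{7(-25)}$ carries an invariant real structure, e.g.\ via its Freudenthal description.

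The main obstacle is Step 2: each self-dual case requires its own construction, since there is no simple uniform argument ruling out the quaternionic type in the Hermitian setting. An alternative would be to combine the Frobenius--Schur indicator of $V_{\varpi_i}$ as a $G_\bC$-representation with the sign encoding how the complex conjugation defining $G_\bR$ acts on the highest weight, as read off from the Vogan diagram of $\fg_\bR$; this uniformizes the argument but still reduces to a case inspection. Either route delivers the dichotomy in the statement: the listed cases are of real type, the others of complex type, and no quaternionic case arises.
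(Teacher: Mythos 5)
Your proposal is correct and takes essentially the same route as the paper: the self-duality step is exactly the paper's test $\tau\varpi_i=\varpi_i$ for the opposition involution $\tau=-w_0$, and for the real-versus-quaternionic step the paper simply cites Gross \cite[\S 2]{bgross} or the criterion of \cite[Theorem IV.E.4]{mtbook}, the latter being precisely your alternative combining the Frobenius--Schur indicator with the conjugation sign read off the Vogan diagram. Your explicit case-by-case real structures (e.g.\ the Hodge star for $\I_{n,n}$, which the paper itself develops in \S\ref{hodgestar}, and the Freudenthal description for $\EVII$) merely unpack what the paper delegates to those references.
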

\begin{proof}
If $V_{\varpi_i}$ is not self-dual, then the representation is of complex type. The dual representation has highest weight $\tau\varpi_i$, where $\tau$ is the opposition involution.  The condition  $\tau\varpi_i=\varpi_i$ holds exactly for the cases  listed  above. To complete the proof, if $\tau\varpi_i=\varpi_i$, one needs to decide if the representation is real or quaternionic. In our situation, it is standard that all these cases are real (e.g.\ \cite[\S2]{bgross}). Alternatively,  one easily checks the reality of the representation using the criterion given by \cite[Theorem  IV.E.4]{mtbook}. 
\end{proof}

\begin{remark}
The Hermitian symmetric domains for which the fundamental representation $V_{\varpi_i}$ is of real type are precisely the domains with a tube domain realization (compare \cite[\S1]{bgross}). We will refer to  these cases as \textsl{tube type} or \textsl{real type}, while the others will be referred as \textsl{non-tube} or \textsl{complex} cases.
\end{remark}

We can now state the following classification result, an expanded version of Theorem \ref{thm2intro} of the introduction:

\begin{theorem}\label{thmclassify}
For every irreducible Hermitian symmetric domain $\calD$, there exists a canonical $\bR$-variation  of Hodge structures $\sV$ of CY type parametrized by $\calD$. The VHS $\sV$ is uniquely determined by the requirement that its weight is  minimal. Specifically,
\begin{itemize}
\item[i)] if $\calD$ is of tube domain type,  $\sV$ is the variation constructed by Gross \cite{bgross}, and its weight is equal to the real rank of $\calD$;
\item[ii)] if $\calD$ is not of tube domain type, $\sV$ is obtained by taking the sum $(\mathscr W\langle c|)\oplus ({\mathscr W} \spcheck |c\rangle)$
 of the $\bC$-VHS  $\mathscr W$ constructed by Sheng--Zuo \cite{sz} with the dual VHS $ {\mathscr W}\spcheck$ and applying an appropriate twist. The minimal weight giving a VHS of CY type is equal the real rank of $\calD$ plus $1$.
\end{itemize}
 Any other irreducible $\bR$-VHS of CY type on $\calD$ can be obtained from the canonical $\calV$ by taking the unique irreducible factor of $\Sym^n\calV$ of CY type, or,  in the non-tube domain case,  by taking the unique irreducible factor of $\Sym^n\calV\left\{-\frac{a}{2}\right\}$ of CY type for appropriate  integers  $a$. 
\end{theorem}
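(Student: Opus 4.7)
The strategy is to translate the classification into an explicit representation-theoretic problem using the Hodge representation formalism of \S\ref{existHodgereps}. An irreducible $\bR$-VHS of CY type on $\calD$ is equivalent to an irreducible real representation $\rho\colon G_\bR \to \GL(V_\bR)$ together with a lift $h$ of the canonical cocharacter $\varphi\colon U(1) \to \bar G$; the Hodge decomposition on $V_\bC$ is read from the eigenvalues of $H_0$ via \eqref{weightsreal} or \eqref{weightscx}. First I would identify which pairs $(\rho, h)$ give rise to a CY Hodge structure.

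The core representation-theoretic step is to prove that the CY condition $h^{n,0} = 1$ forces the highest weight $\lambda$ of the irreducible complex factor ($V_\bC$ in the real-type case, $V_+$ in the complex-type case) to be a positive integer multiple of the cominuscule weight $\varpi_i$. By \eqref{mucond}, $H_0$ is the fundamental coweight dual to the special simple root $\alpha_i$, so for any weight $\mu = \lambda - \sum_j n_j \alpha_j$ of the irreducible $V_\lambda$ one has $\mu(H_0) = \lambda(H_0) - n_i$. The maximal $H_0$-eigenspace is therefore spanned by weight vectors reachable from $v_\lambda$ using only the lowering operators $e_{-\alpha_j}$ with $j \neq i$; requiring it to be one-dimensional is equivalent to $e_{-\alpha_j}(v_\lambda) = 0$ for all $j \neq i$, i.e.\ $\langle \lambda, \alpha_j^\vee\rangle = 0$ for $j \neq i$, i.e.\ $\lambda = k\varpi_i$ for some integer $k \geq 1$. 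The converse is immediate, so the irreducible complex factor is precisely $V_{k\varpi_i}$, which appears as the Cartan component of $\Sym^k V_{\varpi_i}$.

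The proof then splits along the tube/non-tube dichotomy. In the tube case, $V_{\varpi_i}$ is of real type by Lemma \ref{reptype} and $\fa = 0$ by \eqref{comend}, so there is no shift freedom and $h$ is uniquely determined by $\varphi$; the $k = 1$ example recovers Gross's construction, and its weight equals the real rank of $\calD$ by the standard description of the $H_0$-weights on the cominuscule representation. Higher $k$ cases arise as the Cartan component of $\Sym^k\calV$. In the non-tube case, $\fa$ is one-dimensional by \eqref{comend}, so \eqref{weightscx} carries an extra shift parameter $c$ (also visible in \eqref{finitecover}); the canonical $\calV$ corresponds to $k = 1$ with the symmetric (minimal) choice of $c$, and its $\bR$-structure recovers the $\bR$-form of Sheng--Zuo's $\bC$-VHS $\mathscr W$ via \eqref{halftwist}, with weight equal to the real rank of $\calD$ plus one. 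Integer shifts of $c$ correspond precisely to iterated half-twists $\{-a/2\}$ (Convention \ref{fractionalshift}), so every other CY VHS with $V_+$ of highest weight $k\varpi_i$ is realized as the CY factor of $\Sym^k\calV\left\{-\tfrac{a}{2}\right\}$ for the appropriate $a$.

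The principal technical obstacle is establishing the uniqueness of the CY constituent in $\Sym^k\calV$ (or its half-twists): one must show that no irreducible summand other than $V_{k\varpi_i}$ has highest weight a multiple of $\varpi_i$. In the tube case this follows from an $H_0$-eigenvalue count—$k\varpi_i(H_0)$ is attained uniquely in $\Sym^k V_{\varpi_i}$ by $v_{\varpi_i}^k$, ruling out summands $V_{m\varpi_i}$ with $m > k$, while the possibility $m < k$ is excluded by a Pieri-type analysis of symmetric powers of the cominuscule representation. The non-tube case requires an analogous but more delicate bookkeeping inside $\Sym^k(V_+ \oplus V_-)$, carried out compatibly with the conjugation exchanging $V_\pm$ so that a CY summand descends to an $\bR$-irreducible factor.
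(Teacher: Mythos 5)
Your overall route is the same as the paper's: reduce to Hodge representations, prove that the CY condition forces the highest weight of the relevant complex irreducible factor to be $n\varpi_i$, then split along the tube/non-tube dichotomy, invoking Gross \cite{bgross} (with no shift freedom, since $\Hg = G$ in the real case) and Sheng--Zuo \cite{sz} together with iterated half-twists parametrized by the one-dimensional center in the complex case. Your lowering-operator argument for the key step --- the top $H_0$-eigenspace of $V_\lambda$ is the irreducible module of the Levi generated by $v_\lambda$, so it is one-dimensional iff $\langle\lambda,\alpha_j\spcheck\rangle = 0$ for all $j\neq i$ --- is a correct and essentially equivalent variant of the paper's argument via the simple reflections $s_{\alpha_j}$, $j\neq i$, in Lemma \ref{mainlemma}. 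Up to that point the proposal is sound and matches the paper.

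The flaw is in your final paragraph. The intermediate claim you propose to establish --- that no irreducible summand of $\Sym^k V_{\varpi_i}$ other than the Cartan component has highest weight a multiple of $\varpi_i$ --- is false, so your projected ``Pieri-type analysis'' excluding $m < k$ would fail: already for $(B_n,\alpha_1)$ one has $\Sym^3 V_{\varpi_1}\cong V_{3\varpi_1}\oplus V_{\varpi_1}$ (multiplication by the invariant quadratic form embeds $V_{\varpi_1}$), and likewise the trivial summand $V_{0}\subset \Sym^2 V_{\varpi_1}$. Fortunately, no such representation-theoretic exclusion is needed, which is why the paper passes over this point quickly: $\Sym^n\calV$ is a Hodge structure of \emph{fixed} weight $N$, its top $H_0$-eigenspace is $\Sym^n$ of the one-dimensional top eigenspace of $V$, hence one-dimensional and contained in the Cartan component; consequently every other irreducible summand, whatever its highest weight, has $h^{N,0}=0$ and so fails to be of CY type in the sense of Definition \ref{defcy}, because effectivity with $h^{N,0}\neq 0$ is required and the Tate twist that would repair this is disallowed (Remark \ref{rematate}). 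With that substitution your uniqueness argument closes, using only the eigenvalue count you already have. Two smaller omissions relative to the paper's proof: you never verify that a compatible polarization exists (the paper notes that in the complex case the invariant Hermitian form on $V_+$ yields the required symmetric or alternating form, depending on the half-twist), and in the tube case you should make explicit that the residual freedom in lifting $\varphi$ --- a lift to $M$ rather than $\Hg$ --- is exactly a Tate twist, again excluded by the CY normalization.
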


The key ingredient of the theorem is the following lemma which shows that the relevant Hodge representations in the Hermitian CY case are the cominuscule representations.

\begin{lemma}\label{mainlemma} 
With notation  as in \S\ref{sectprelim}, let $\lambda$ be the highest weight of an irreducible factor $V_+$ of $V$. Possibly after replacing $V_+$ with $V_{-}$, we can assume that $\tau\lambda(H_0)\le \lambda(H_0)$. 
Then a necessary condition that the irreducible representation $\rho\colon G\to\GL(V)$ arises from a Hermitian VHS of CY type over $\calD$ is 
\begin{equation}\label{cycond}
\varpi(H_0)<\lambda(H_0) \textrm{ for all weights } \varpi\neq \lambda \textrm{ of } V_+.
\end{equation} 
 Furthermore, the condition \eqref{cycond} is equivalent to the condition that $\lambda$ is a multiple of the fundamental cominuscule weight $\varpi_i$ associated to the domain $\calD$. In particular, as a $G_\bR$-representation,  $V$ is   real if $\calD$ is of tube type and  complex if $\calD$ is not of tube type, and the quaternionic case does not arise.
\end{lemma}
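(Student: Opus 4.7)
The plan is to translate the Calabi--Yau condition $h^{n,0}=1$ into a multiplicity-one statement for the top $h$-weight on $V_\bC$, then exploit the combinatorics of weights of an irreducible representation to pin down $\lambda$, and finally handle the real-versus-complex dichotomy.

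First, by \eqref{weightsreal} and \eqref{weightscx}, the $h$-weights on $V_\bC$ are $\{2\varpi(H_0) : \varpi \in \sX(V_+)\}$ in the real case and $\{\pm 2(\varpi(H_0) - c) : \varpi \in \sX(V_+)\}$ in the complex or quaternionic case. The CY hypothesis $h^{n,0}=1$ amounts to saying that the maximum of these weights is attained with multiplicity exactly one. In the real case this maximum is $2\lambda(H_0)$, attained at the highest weight $\lambda$ of the irreducible $V_+$ with multiplicity one, so CY is immediately equivalent to \eqref{cycond}. In the complex case the two candidate maxima are $2(\lambda(H_0) - c)$, coming from $V_+$ at the highest weight $\lambda$, and $2(c + \tau\lambda(H_0))$, coming from $V_-$ (noting that the lowest weight of $V_+$ equals $-\tau\lambda$, so the top weight of $V_-$ is $\tau\lambda$). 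After relabeling so that $\tau\lambda(H_0) \leq \lambda(H_0)$, one can choose $c$ so that the first term strictly dominates; the CY requirement then again reduces to \eqref{cycond} on $V_+$.

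Second, I would establish the equivalence of \eqref{cycond} with $\lambda$ being a nonnegative integer multiple of the cominuscule weight $\varpi_i$. For the forward direction, if $\langle \lambda, \alpha_j\spcheck \rangle \geq 1$ then $\lambda - \alpha_j$ is a weight of $V_+$, and \eqref{cycond} forces $\alpha_j(H_0) > 0$; by \eqref{mucond} this compels $j = i$. Writing $\lambda = \sum_j a_j \varpi_j$ with $a_j = \langle \lambda, \alpha_j\spcheck \rangle$, we conclude that $a_j = 0$ for $j \neq i$, so $\lambda = n\varpi_i$ for some integer $n \geq 0$. For the converse, since $\varpi_i$ is cominuscule (the coefficient $n_i$ of $\alpha_i$ in the highest root equals $1$), every weight $\mu$ of $V_{\varpi_i}$ has the form $\mu = \varpi_i - \sum_j b_j \alpha_j$ with $b_i \in \{0,1\}$ and $b_i = 0$ iff $\mu = \varpi_i$. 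Since $V_{n\varpi_i}$ is a summand of $\Sym^n V_{\varpi_i}$, its weights are sums of $n$ weights of $V_{\varpi_i}$; hence the $\alpha_i$-coefficient of $n\varpi_i - \mu$ vanishes iff $\mu = n\varpi_i$, which is exactly \eqref{cycond}.

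Finally, for the real/complex classification: $V_{n\varpi_i}$ is self-dual iff $\tau\lambda = \lambda$ iff $\tau\varpi_i = \varpi_i$, which by Lemma~\ref{reptype} holds exactly when $\calD$ is of tube type. In the non-tube case $V_{n\varpi_i}$ is not self-dual, so $V$ is of complex type by \eqref{comend}. In the tube case, Lemma~\ref{reptype} supplies a real structure on $V_{\varpi_i}$, so $\Sym^n V_{\varpi_i}$ is a real $G$-representation; its unique irreducible summand with highest weight $n\varpi_i$ is then preserved by complex conjugation and inherits a real structure, ruling out the quaternionic case. The main technical point I expect to work hardest on is the second paragraph: the forward direction relies on the standard fact that $\lambda - \alpha_j$ is a weight of $V_\lambda$ whenever $\langle \lambda, \alpha_j\spcheck \rangle \geq 1$, while the converse rests on the stability of the cominuscule ``$b_i \in \{0,1\}$'' condition under passage to symmetric powers.
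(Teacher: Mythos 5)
Your overall route for the necessity claim and the forward implication is the same as the paper's: reduce the CY condition, via \eqref{weightsreal} and \eqref{weightscx}, to the statement that the top $h$-weight on $V_\bC$ has multiplicity one, and then kill the coefficients $\lambda(\check\alpha_j)$ for $j\neq i$ using \eqref{mucond} --- your step ``$\langle\lambda,\check\alpha_j\rangle\geq 1$ implies $\lambda-\alpha_j\in\sX(V_+)$'' is interchangeable with the paper's use of the reflections $s_{\alpha_j}$, and both proofs quote Lemma~\ref{reptype} for the tube/non-tube dichotomy. Two minor points: in the necessity direction $c$ is dictated by the given VHS, not chosen --- what you actually need is that the line $V^{n,0}$ lies in exactly one of the shifted $V_\pm$ (a tie forces $\dim V^{n,0}\geq 2$), after which you relabel; and your real-structure argument for the Cartan component of $\Sym^n V_{\varpi_i}$ is a correct expansion of what the paper leaves implicit (there, inside the proof of Theorem~\ref{thmclassify}).

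There is, however, a genuinely false claim in your converse direction. Writing $\mu=\varpi_i-\sum_j b_j\alpha_j$, it is \emph{not} true that every weight $\mu$ of $V_{\varpi_i}$ has $b_i\in\{0,1\}$: by \eqref{mucond}, $b_i=\varpi_i(H_0)-\mu(H_0)$, which ranges over $0,1,\dots,k$ with $k$ the real rank of $\calD$ (see \S\ref{listrep}, item (1)). Concretely, for $(C_3,\alpha_3)$ the lowest weight of $V_{\varpi_3}$ is $-\varpi_3$ and $2\varpi_3=2\alpha_1+4\alpha_2+3\alpha_3$, so $b_3$ attains the value $3$; the same happens even in the minuscule case $(A_5,\alpha_3)$. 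Were your claim correct, every such VHS would have only two nonzero Hodge numbers, contradicting, e.g., the $(1,27,27,1)$ structure in the $E_7$ case. You have transplanted to $V_{\varpi_i}$ a property of the \emph{adjoint} representation: cominusculeness of $\alpha_i$ (i.e.\ $n_i=1$ in the highest root) says the $\alpha_i$-coefficient of every root lies in $\{-1,0,1\}$, so that $\ad H_0$ three-grades $\fg_\bC$ --- and this does not propagate to $V_{\varpi_i}$. Fortunately your subsequent deduction uses only the other conjunct, that $\varpi_i$ is the \emph{unique} weight of $V_{\varpi_i}$ with $b_i=0$; this is true but requires its own (easy) proof and has nothing to do with cominusculeness: the $b_i=0$ layer of $V_{n\varpi_i}$ is the irreducible module of the Levi subalgebra $\fk_\bC$ with highest weight $n\varpi_i$, which is one-dimensional precisely because $n\varpi_i(\check\alpha_j)=0$ for all $j\neq i$. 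Applied directly to $V_{n\varpi_i}$, this Levi observation proves the converse in one stroke (and, read backwards, the forward direction too), making the $\Sym^n$ detour unnecessary; in particular the ``stability of $b_i\in\{0,1\}$ under symmetric powers'' that you single out as the main technical point is both false (in $\Sym^nV_{\varpi_i}$ the coefficient $b_i$ reaches $nk$) and not needed. Note that the paper's own proof establishes only \eqref{cycond} $\Rightarrow\lambda=n\varpi_i$ and leaves the converse implicit, so your instinct to supply it was right; only the justification needs the repair above.
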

\begin{proof}
Since all the weights of $V_+$ are obtained from $\lambda$ by subtracting positive roots,  it follows that
$$\max_{\varpi\in \sX(V_+)} \varpi(H_0)=\lambda(H_0).$$
Using the description of the weights of $h$ on $V_\bC$ (cf.\ \eqref{weightsreal} and \eqref{weightscx}), we see that $\dim_\bC V^{n,0}=1$ implies that the above maximum is attained only for the weight $\lambda$ (i.e. \eqref{cycond}). By applying the reflection in the root $\alpha_j$, and using \eqref{mucond}, we get 
$$s_{\alpha_j}(\lambda)(H_0)=\left(\lambda-\lambda(\check\alpha_j)\cdot \alpha_j\right)(H_0)=\lambda(H_0),$$
(where $\check\alpha_j$ is the corresponding coroot).  Since $s_{\alpha_j}(\lambda)\in \sX(V_+)$, from \eqref{cycond}, we conclude that $s_{\alpha_j}(\lambda)=\lambda$  for all $j\neq i$, i.e.\ $\lambda(\check\alpha_j)=0$ for all $j\neq i$, which is equivalent to $\lambda=n\varpi_i$. The last assertion follows from Lemma \ref{reptype}.
\end{proof}

\begin{proof}[Proof of Theorem \ref{thmclassify}.]
First consider the tube domain case. By Lemma \ref{mainlemma}, if $V$ is a Hermitian VHS of CY type, then there is an irreducible summand $V'$ (over $\Cee$) of $V$ occurring in  $V$ with highest weight $n\varpi_i$. But then $V'$ is a real representation, so that $V'=V$. The fact that $V$ actually arises follows from Gross \cite{bgross}. More precisely, for $n=1$, the construction  of the Hermitian VHS  $\calV$ over the tube domain $\calD$ is the content of \cite{bgross}. For $n>1$, the (not in general irreducible) VHS $\Sym^n\calV$ is of CY type, and there is an irreducible summand that will have highest weight $n\varpi_i$  and will remain of CY type. Finally, we have already noted that, in the real case, the Hodge group has to coincide with $G$ (see  \eqref{comend}). Thus, once the representation $\rho$ is fixed, the only possible variation in lifting $\varphi$ to $h$  is a Tate twist, which  in turn is  not allowed by Definition \ref{defcy} (see also Remark \ref{rematate}).  This completes the proof in the tube domain case.

In the non-tube domain case, a Hermitian VHS of CY type over $\calD$ gives a complex representation $V$ with $V_\bC=V_+\oplus V_{-}$. After reordering, we may assume that the highest weight for $V_+$ is equal to $n\varpi_i$. Note that $V_+$ is not real. Let $W$ be the representation of $G$ corresponding to $n=1$.  Sheng--Zuo \cite{sz} have noted that $W$ carries a $\bC$-Hodge structure, and hence the vector space $W\oplus W\spcheck$ will carry an $\bR$-Hodge structure. Typically, this structure will not be of CY type. From the description of the weights \eqref{weightscx}, it is immediate to see that after an iterated half-twist, one can arrange $V_+\langle c|\oplus V_{-}|c\rangle$ to be of CY type for all $n>0$. Specifically, for $\lambda=n\varpi_i$,
\begin{equation}\label{minc}
c=\frac{1}{2}\left(\lambda(H_0)-\tau\lambda(H_0)\right)-\frac{1}{2}
\end{equation}
will give a CY Hodge structure of minimal weight (compare \S\ref{listrep}). Any additional $-\frac{1}{2}$ half-twists will preserve the CY condition, but will increase the weight by $1$. The additional half-twists are explained by the fact that the (generic) Hodge group satisfies $\Hg(\bC)=G(\bC)\cdot \bC^*$ (in contrast to the real case, where $\Hg=G$). The minimal weight satisfying the CY condition is equal to the real rank of $\Hg$, and thus is one larger than the real rank of $G$. Finally, it is easy to see that there always   exists a compatible polarization (cf.\ \cite[\S IV.A (Step 4), p.\ 90]{mtbook}): in the complex case there exists an invariant Hermitian form on $V_+$, which gives both an alternating and a symmetric form on $V$ (the correct choice depending on the half-twist, or equivalently the weight).   
\end{proof}

\begin{remark}
If $\calD$ is a product $\calD_1\times\calD_2$ of Hermitian symmetric domains, then a Hermitian VHS $\calV$ on $\calD$ decomposes as a product $\sV_1\otimes\sV_2$. It is clear that $\calV$ is of CY type iff each $\calV_i$ is  of CY type.  
\end{remark}

\subsection{List of Hodge representations of CY type}\label{listrep}
Here we discuss in more detail the canonical Hermitian VHS of CY type given by Theorem \ref{thmclassify}.  Let $\calD$ be an irreducible Hermitian symmetric domain corresponding to $(R,\alpha_i)$, where $R$ is a root system and $\alpha_i$ is a special root. Let $V=V_{\varpi_i}$ be the  cominuscule representation giving the canonical VHS of CY type. 
 Let $\tau=-w_0$ denote the opposition involution. As before, we distinguish two cases: real (or tube) case if $\tau\alpha_i=\alpha_i$, and complex otherwise. We denote by $V_+$ the irreducible summand of the $G(\bC)$-representation $V_\bC$ of highest weight $\varpi_i$.  In the complex case, $V_\bC=V_+\oplus V_-$ and $V_-$ has highest weight $\tau\varpi_i$ (another cominuscule weight).

The set of roots $\Delta$ of $R$ are partitioned into compact roots $\Delta_c$
and non-compact roots $\Delta_{nc}$. If, as before (see \S\ref{proofclassify}), $\fg=\fk\oplus \fp$ is the Cartan decomposition and $\fh$ is a compact Cartan subalgebra, we have
\begin{eqnarray*}
\fk_\bC&=&\fh\oplus \bigoplus_{\alpha\in \Delta_c} \fg_\alpha=\bC\cdot H_0\oplus \fg'_\bC,\\
\fp_\bC&=&\bigoplus_{\alpha\in \Delta_{nc}}  \fg_\alpha,
\end{eqnarray*}
where $\fg'$ denotes the semi-simple part of the Lie algebra $\fk$. The Dynkin diagram of the root system $R'$ associated to $\fg'_\bC$ is obtained from that of $R$ by deleting the node corresponding to $\alpha_i$.  We are interested in the weight decomposition of $V_+$ with respect to the cocharacter $\varphi$ (corresponding to $H_0$). Using \eqref{mucond}, one immediately sees (compare also Lemma~\ref{mainlemma}, and  \cite[\S3]{moonenmt2}):
\begin{itemize}
\item[(1)] The weights of $\varphi$ on $W$ are: $\varpi_i(H_0), \varpi_i(H_0)-1,\dots, w_0\varpi_i(H_0)$. Thus, the minimal weight  of a Hodge structure on $V$ (not necessarily of CY type) will be $k:=\varpi_i(H_0)+\tau(\varpi_i)(H_0)\in \bZ_+$.  Note that $k$ coincides with the real rank of $G_\bR$ and can be computed as the sum of the coefficients of the roots $\alpha_i$ and $\tau\alpha_i$ in the weight $\varpi_i$. For example, for $(E_7,\alpha_7)$: $k=2\cdot\frac{3}{2}=3$, and for $(E_6,\alpha_1)$: $k=\frac{4}{3}+\frac{2}{3}=2$.
\item[(2)] The weight space corresponding to the weight $\varpi_i
(H_0)$ is $1$-dimensional (the CY condition). 
\item[(3)] The weight space corresponding to the weight $\varpi_i
(H_0)-1$ (corresponding to $V_+^{k-1,1}$) has dimension equal to half the number of non-compact roots (i.e.\ $\frac{1}{2}(|\Delta_R|-|\Delta_{R'}|)$). For example, for $E_7$, we get $h^{2,1}=\frac{1}{2}(126-72)=27$.
\end{itemize}

\smallskip
 
The following table gives a list of the Hermitian symmetric domains in terms of the root datum (the roots are labeled as in Bourbaki \cite{bourbaki}).
\begin{table}[htb]
\begin{tabular}{ll|ll|ll}
Label&$(R,\alpha_i)$&$G(\bR)$&$K$&$\bR$-rank\\
\hline
$\I_{p,q}$&$(A_{p+q-1},\alpha_p)$&$\SU(p,q)$&$\mathrm{S}(\U(p)\times \U(q))$&$\min(p,q)$\\
\hline
$\II_{n}$&$(D_{n},\alpha_{n})$&$\SO^*(2n)$&$\U(n)$&$\left[\frac{n}{2}\right]$\\
\hline
$\III_n$&$(C_n,\alpha_n)$&$\Sp(n,\bR)$&$\U(n)$&$n$\\
\hline
$\IV_{2n-1}$&$(B_n,\alpha_1)$&$\Spin(2,2n-1)$&$\Spin(2)\times_{\mu_2} \Spin(2n-1)$&$2$\\
$\IV_{2n}$&$(D_{n+1},\alpha_1)$&$\Spin(2,2n)$&$\Spin(2)\times_{\mu_2} \Spin(2n)$&$2$\\
\hline
$\EIII$&$(E_6,\alpha_1)$&$\E_{6,2}$&$\U(1)\times_{\mu_4}\Spin(10)$&$2$\\
$\EVII$&$(E_7,\alpha_7)$&$\E_{7,3}$&$\U(1)\times_{\mu_3}\E_6$&$3$
\end{tabular}
\vspace{0.1cm}
\caption{Hermitian symmetric domains of non-compact type}\label{tabledomains}
\end{table}

The real case (or equivalently $\calD$ is of tube domain type) is discussed in detail by Gross \cite{bgross}.  For completeness, we list the cases with some relevant information: 
\subsubsection{} $\I_{n,n}\ (A_{2n-1}, \alpha_n)$:  weight $2\varpi_n(H_0)=n$; $V_\bC=\bigwedge^n S$, where $S$ is the standard representation; for $n=3$, $h^{2,1}=9$.
\subsubsection{} $\IV_{2n-1}\ (B_n, \alpha_1)$: weight $2$; $V$ is the standard representation;  this is the classical case of $K3$ type.
\subsubsection{} $\III_n\ (C_n, \alpha_n)$: weight $n$; $V$ is an irreducible factor of $\bigwedge^n S$, where $S$ is the standard representation; for $n=3$, $h^{2,1}=6$; this case corresponds geometrically to the primitive middle cohomology of an abelian $n$-fold.
\subsubsection{} $\IV_{2n-2}\ (D_n, \alpha_1)$:  weight $2$; $V$ is the standard representation;  this is the classical case of $K3$ type.
\subsubsection{} $\II_{2n}\ (D_{2n}, \alpha_{2n})$: weight $n$; $V$ is a half-spin representation; for $n=3$, $h^{2,1}=15$.
\subsubsection{} $\EVII\ (E_7, \alpha_7)$: weight $3$; $V$ is the  cominuscule representation; $h^{2,1}=27$.

\medskip

The remaining cases are of complex type. The $\bC$-Hodge structure induced by (a lift of) $\varphi$ on $V_+$ was studied by Sheng--Zuo \cite{sz}. We are interested in   $\bR$-Hodge structures on $V$ of CY type. As explained above, these can be obtained by considering $V_+\langle c|\oplus V_-|c\rangle$ for an appropriate shift $c$. It is interesting to note that the minimal weight Hodge structure associated to the representation $V$ is not of CY type. The CY Hodge structure on $V$ is obtained by applying a half-twist to this minimal weight Hodge structure.  The relevant cases are:

\subsubsection{}  $\I_{p,q}\ (A_{p+q-1},\alpha_p)$ for $p<q$:  Note first $\tau \alpha_p=\alpha_{q}$, $P/Q\cong \bZ/(p+q)$, where $Q$ and $P$ are the root and weight lattices respectively.  We have 
$$\varpi_p(H_0)=\frac{pq}{p+q},\ \ 
\tau (\varpi_p)(H_0)=\frac{p^2}{p+q},$$
and thus the minimal weight for a Hodge structure on $V$ will be $k=\varpi_p(H_0)+\tau (\varpi_p)(H_0)=p$. To obtain a CY Hodge structure, the minimal weight will be $p+1$. Specifically,  the weights of the cocharacter $\varphi_\bC$ on $V_+$ are: 
$$\frac{pq}{(p+q)}, \frac{pq}{(p+q)}-1,\dots,-\frac{p^2}{p+q}$$
while those on the dual representation $V_-$ are:
$$\frac{p^2}{(p+q)}, \frac{p^2}{(p+q)}-1,\dots,-\frac{pq}{p+q}.$$
To obtain a (minimal weight) Hodge structure, one needs to shift the weights so that $\frac{pq}{(p+q)}-c=\frac{p^2}{(p+q)}+c$ (recall that the Hodge group $\Hg(\bC)=G(\bC)\cdot \bC^*$ in the complex case; see Convention~\ref{fractionalshift} for the meaning of the fractional twist; also compare to \eqref{minc}). We get that $V_+\left\langle\frac{p(q-p)}{2(p+q)}\right|\oplus V_-\left|\frac{p(q-p)}{2(p+q)}\right\rangle$ will carry a Hodge structure of weight $p$. By applying a half-twist, we obtain a CY Hodge structure of weight $p+1$: 
\begin{equation}\label{twistapq}
V_+\left\langle\frac{p(q-p)}{2(p+q)}-\frac{1}{2}\right|\oplus V_-\left|\frac{p(q-p)}{2(p+q)}-\frac{1}{2}\right\rangle.
\end{equation}
The Hodge numbers are easily computed by noting that $V_+=\bigwedge^p S$, where $S$ is the standard representation of $\SU(p,q)$. 

We are particularly interested in the weight $3$ CY case. Since we need $p+1\le 3$, we get $p=1$ or $p=2$. If $p=1$, the associated domain $I_{1,q}$ is the $q$-dimensional complex ball. The minimal weight for a Hermitian VHS is $1$ in this case, and the minimal weight for a Hermitian VHS of CY type is $2$ ($K3$ type). To obtain a weight $3$ VHS, we need to apply an additional half-twist. It is immediate to see that  $h^{2,1}=q$. This case is somewhat special among the complex cases, in the sense that $V_+=V^{3,0}\oplus V^{2,1}$ and $V_-=V^{1,2}\oplus V^{0,3}$, i.e.\ there is no ``mixing'' of $V_+$ and $V_-$ in $V^{2,1}$.  Also note that the VHS of weight $3$ is of maximal dimension, i.e.\ $\dim \calD=h^{2,1}$. If $p=2$, the minimal weight for a Hermitian VHS of CY type over $\I_{2,q}$ is $3=p+1$. Since $V_+=\bigwedge^2 S$, we get the dimensions of the weight spaces for $V_+$ (w.r.t. $\varphi$) to be $1$, $2q$, $\frac{q(q-1)}{2}$. Thus, 
$$h^{2,1}=h^{2,1}_++h^{2,1}_-=2q+\frac{q(q-1)}{2}=\frac{q(q+3)}{2}$$ for the resulting weight $3$ Hodge structure (where $h^{2,1}_\pm$ denotes the dimension of $(2,1)$ space on $V_{\pm}$ after the shift). Since the dimension of $\I_{2,q}$ is $2q$, we see that the Hermitian variation of Calabi--Yau type on $I_{2,q}$ is not maximal for $q>1$.

\subsubsection{} $(D_{2n-1},\alpha_{2n-1})$: $\tau \alpha_{2n-1}=\alpha_{2n-2}$, $P/Q\cong \bZ/4$. We have 
$$
 \varpi_{2n-1}(H_0)=\frac{2n-1}{4},\ \ 
\tau (\varpi_{2n-1})(H_0)=\frac{2n-3}{4}.$$
Thus the weights on $V_+$ are: $\frac{2n-1}{4},\frac{2n-1}{4}-1,\dots,-\frac{2n-3}{4}$. The minimal twist that gives a CY Hodge structure  is
$$V_+\left\langle-\frac{1}{4}\right|\oplus V_-\left|-\frac{1}{4}\right\rangle$$
and it has weight $n$. The representation $V_+$ is a half-spin representation. For $n=3$, $h^{2,1}=2^{2n-2}-1=15.$

\subsubsection{}  $(E_6,\alpha_1)$: $\tau \alpha_1=\alpha_6$, $P/Q\cong \bZ/3$. We have
\begin{equation*}
\varpi_{1}(H_0)=\frac{4}{3},\ \  \tau \varpi_1(H_0)=\frac{2}{3}.
\end{equation*}
The minimal twist that gives a CY Hodge structure  is
$$V_+\left\langle-\frac{1}{6}\right|\oplus  V_-\left|-\frac{1}{6}\right\rangle$$
of weight $3$. More explicitly, consider the direct sum $V_+\oplus V_-$ of the two (co)minuscule representations of $E_6$. There is a lift  $\tilde \varphi $ of the cocharacter $\varphi$ of $\bar G$  to the simply connected form $G$ so that it acts with weights (and dimensions of weight spaces):
$$\begin{matrix}
\textrm{weight}& 8&4&2&-2&-4&-8\\
\hline
V_+&1&&16&&10\\
V_- &&10&&16&&1\\
\hline
V_{\bC}&1&10&16&16&10&1
\end{matrix}$$
(N.B. the minimal lift of $\varphi$ to $G$ would act with weights $4,2,1,-1,-2,-4$. For our purposes,  we need to take a double cover of it; compare to diagram \eqref{diag1}.) We then consider the $1$-parameter subgroup given by
\begin{eqnarray*}
\bC^*&\to& G(\bC)\times \bC^*\\
t&\to&(\tilde {\varphi}(t), t)
\end{eqnarray*}
where $t\in \bC^*$ acts on $V_+\oplus V_-$ by the character $t$ on $V_+$ and $t^{-1}$ on $V_-$. The twisted weights (and dimensions of weight spaces) will be 
$$\begin{matrix}
\textrm{weight}& 3&1&-1&-3\\
\hline
V_{\bC}&1&26&26&1
\end{matrix}$$
i.e.\ we obtain a CY Hodge structure of weight $3$ with $h^{2,1}=26$. As in the other complex cases, it is possible to obtain a Hermitian VHS over the $\EIII$ domain of  weight $2=\rank_\bR \EIII$, but this is no longer of CY type (it has Hodge numbers $11,32,11$). 
Also note that $K=\U(1)\times_{\mu_4}\Spin(10)$. Then the dimensions of the $V^{p,q}$ (and $V_{\pm}^{p,q}$) spaces can be also computed by decomposing $V_+$ as a $\Spin(10)$-representation. Explicitly, we have 
$V_+\cong \bC\oplus W_{\varpi_1}\oplus W_{\varpi_5}$ 
as a $D_5$-representation (see also \cite[\S4.5]{sz}). Here the first summand is the standard representation, while the second is the half-spin representation of $\Spin^*(10)$. 

\medskip

\begin{notation} We encode a Hermitian VHS of CY type by $(R,\alpha_i;\lambda)\{c\}$, where   $(R,\alpha_i)$ determines the domain $\calD$, $\lambda$ is the highest weight of the irreducible complex representation $V_+$, and, in the complex case, $c$ denotes a half-twist (see \eqref{halftwist}). The possible fractional meaning of $c$ is explained by Convention~\ref{fractionalshift}.
\end{notation}

In summary, we obtain the following  Hermitian VHS of CY type for weight $3$. 

\begin{corollary}\label{thmclassify3}
The Hermitian VHS of CY type for  weight $3$ are:
\begin{itemize}
\item[i)] Four primitive real cases:  $\I_{3,3}$ $(A_5,\alpha_3;\varpi_3)$, $\III_3$ $(C_3,\alpha_3;\varpi_3)$, $\II_6$  $(D_6,\alpha_6;\varpi_6)$, and $\EVII$ $(E_7, \alpha_7; \varpi_7)$, corresponding to the weight $3$ cases in Gross \cite{bgross}.
\item[ii)] Re-embeddings of lower weight cases:  $\mathfrak{H}$ $(A_1,\alpha_1; 3\varpi_1)$. 
\item[iii)] Complex cases:  two infinite series: $\I_{1,n}$ $(A_{n},\alpha_1;\varpi_1)\left\{-\frac{n+3}{2(n+1)}\right\}$ and $\I_{2,n}$ $(A_{n+1},\alpha_2;\varpi_2)\left\{\frac{n-6}{2(n+2)}\right\}$, and two isolated case:  $\II_5$ $(D_5,\alpha_5; \varpi_5)\left\{-\frac{1}{4}\right\}$, and $\EIII$ $(E_6,\alpha_1; \varpi_1)\left\{-\frac{1}{6}\right\}$.

\item[iv)] Reducible cases: 
$\mathfrak{H}\times \IV_n$ (in particular $\mathfrak{H}\times \mathfrak{H}\times \mathfrak{H}$) or $\mathfrak{H}\times \I_{1,n}$. \end{itemize}
\end{corollary}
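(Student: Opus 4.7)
The plan is to derive the corollary as the weight-$3$ specialization of Theorem~\ref{thmclassify}. By that theorem, every irreducible Hermitian VHS of CY type on an irreducible domain $\calD$ is the unique CY summand of $\Sym^n\calV$ in the tube case, or of $\Sym^n\calV\{-\tfrac{a}{2}\}$ in the non-tube case, where the canonical $\calV$ has minimal weight equal to $\mathrm{rank}_{\bR}\calD$ (tube) or $\mathrm{rank}_{\bR}\calD+1$ (non-tube). Since taking $\Sym^n$ multiplies weight by $n$ and each iterated half-twist shifts weight by $1$, fixing the total weight at $3$ leaves finitely many choices of $(\calD,n,a)$, and the enumeration against Table~\ref{tabledomains} becomes mechanical.

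In the irreducible tube case I would solve $n\cdot\mathrm{rank}_{\bR}\calD=3$: the rank-$3$ domains $\I_{3,3}$, $\III_3$, $\II_6$, $\EVII$ arise with $n=1$ (case~(i)), while the only rank-$1$ tube domain $\mathfrak{H}$ (presented as $(A_1,\alpha_1)$ or equivalently $\I_{1,1}=\III_1=\II_2$) arises with $n=3$, giving highest weight $3\varpi_1$ (case~(ii)); rank $2$ is excluded since $3/2\notin\bZ$. In the non-tube case I would solve $n(\mathrm{rank}_{\bR}\calD+1)+a=3$: rank~$2$ forces $(n,a)=(1,0)$ and produces the canonical weight-$3$ structures on $\I_{2,n}$, $\II_5=(D_5,\alpha_5)$, and $\EIII$, while rank~$1$ forces $(n,a)=(1,1)$, i.e.\ one additional half-twist of the canonical K3 structure on the complex ball $\I_{1,n}$; ranks $\geq 3$ are excluded. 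The fractional twist exponents listed in case~(iii) would be verified by direct substitution into formula~\eqref{minc} using the weight data recorded in \S\ref{listrep}.

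For the reducible case, a product $\calD_1\times\calD_2$ carries a CY VHS of weight $3$ iff both factors do with weights summing to $3$, forcing a partition $3=1+1+1$ or $3=1+2$. The weight-$1$ factor must be $\mathfrak{H}$ with its elliptic-curve VHS, and the irreducible weight-$2$ K3 factors are, by a parallel weight-$2$ application of Theorem~\ref{thmclassify}, the tube domains $\IV_n$ for $n\geq 2$ --- where $\IV_2\cong\mathfrak{H}\times\mathfrak{H}$ absorbs the $1+1$ partition and accounts for the ``$\mathfrak{H}\times\mathfrak{H}\times\mathfrak{H}$'' parenthesis --- together with the twisted complex balls $\I_{1,n}$. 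The exceptional isomorphisms $\I_{2,2}\cong\IV_4$, $\III_2\cong\IV_3$, $\II_4\cong\IV_6$ identify the remaining rank-$2$ tube K3 options with the $\IV_n$ series, ruling out spurious additional cases. This yields precisely $\mathfrak{H}\times\IV_n$ and $\mathfrak{H}\times\I_{1,n}$, i.e.\ case~(iv).

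The main obstacle is the bookkeeping surrounding Convention~\ref{fractionalshift}: I would need to verify that the fractional half-twists in case~(iii) agree in sign with what formula~\eqref{minc} outputs and, in the rank-$1$ non-tube situation, correctly incorporate the extra $-\tfrac{1}{2}$ shift coming from the additional half-twist. Once the sign and normalization conventions are pinned down, the entire classification reduces to the rank-and-weight arithmetic sketched above.
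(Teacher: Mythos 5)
Your proposal is correct and takes essentially the same route as the paper: Corollary~\ref{thmclassify3} is obtained there precisely by specializing Theorem~\ref{thmclassify} to weight $3$, with the rank-and-weight arithmetic read off from Table~\ref{tabledomains} and the fractional twists computed case by case in \S\ref{listrep} via formula~\eqref{minc} (plus the additional half-twist in the ball case), and the reducible case handled by the product remark together with the weight-$1$ and weight-$2$ ($K3$-type) classifications. The only trivial caveat is that your $K3$ series should start at $\IV_1\cong\mathfrak{H}$ (the $\Sym^2$ re-embedding) rather than $\IV_2$, which does not affect the final enumeration.
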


\begin{remark}\label{remmaxvar}
Note that the cases that give maximal horizontal subvarieties (i.e.\ $h^{2,1}=\dim\calD$) are those of (i), (ii), (iii) for $I_{1,n}$, and (iv) for $\mathfrak{H}\times \mathcal{D}_n$. Most of the remaining cases (all of complex type) can be embedded into maximal weight three Hermitian VHS over some other Hermitian domain (of real type) and be understood as Noether--Lefschetz subloci with weak complex multiplication. Specifically, if $\calD$ carries a VHS of CY type, and $\calD'\hookrightarrow \calD$ is a totally geodesic embedding, then, by restriction, $\calD'$ also carries a VHS of CY type. Satake \cite{satake} and Ihara \cite{ihara} have classified all the holomorphic, totally geodesic embeddings $\calD'\hookrightarrow \calD$ among Hermitian symmetric domains. In particular, we get the following  embeddings of the complex cases into maximal VHS:
\begin{itemize}
\item[a)] $\I_{2,6}\hookrightarrow \EVII$;
\item[b)] $\II_5\hookrightarrow \II_6$;
\item[c)] $\EIII\hookrightarrow \EVII$;
\item[d)] $\mathfrak{H}\times \I_{1,n}\hookrightarrow \mathfrak{H}\times \IV_{2n}$ (induced from  $\I_{1,n}\hookrightarrow \IV_{2n}$).
\end{itemize}
The case $\I_{2,n}$ ($n>6$) does not embed in a maximal weight three Hermitian VHS of CY type. 
\end{remark}

\subsection{Hermitian VHS of geometric origin}\label{geometricex} 
While typically the VHS of Calabi-Yau type that occur in the geometric context are not of Hermitian type, a number of interesting Hermitian VHS of CY type of geometric origin are known:

\subsubsection{Type $\III_n$} The middle cohomology of abelian $n$-folds gives a weight $n$ VHS of CY type parametrized by the Siegel upper half space $\mathfrak{H}_n$. 

\subsubsection{Type $\I_{n,n}$} It is well known that the domain $\I_{n,n}$ parameterizes abelian varieties $A$ of Weil type of dimension $g=2n$ (i.e. abelian varieties with weak CM multiplication by a degree $2$ CM field $E$ (see Definition \ref{weakCMdef}), and such that the induced unitary group is $\SU(n,n)\subset \Sp(2g)$). It is not hard to see that the Hodge structure $\bigwedge^n H^1(A)$ contains a Hodge substructure of Calabi-Yau type, giving a motivic realization for the Hermitian VHS of CY type associated to $\I_{n,n}$. For instance, this follows from our classification Theorem \ref{thmclassify} and some standard representation theory (see \S\ref{hodgestar} and the proof of Theorem \ref{classify3q} for a discussion of the key ingredients in a more general setup). A detailed study of the cases $n=2,3$ is done in \cite{lombardo} and \cite{filippini} respectively. 

\subsubsection{Type $\I_{1,1}\times \IV_n$} Borcea \cite{borcea} and Voisin \cite{voisincy} have constructed families of Calabi--Yau threefolds such that the associated VHS is parametrized by $\mathfrak{H}\times \calD$, where $\mathfrak{H}$ is the upper half plane and $\calD$ is a Type $\IV$ domain. The Calabi--Yau threefolds are obtained by resolving $(E\times S)/\langle \tau\rangle$, where $E$ is an elliptic curve, $S$ is a $K3$ surface and $\tau$ an involution (acting diagonally).
\subsubsection{Type $\I_{1,n}$} A slight modification of  the Borcea--Voisin construction, by considering higher order automorphisms, leads to VHS associated to Calabi--Yau threefolds that are parametrized by complex balls $\calB_n$ (e.g.\ Rohde \cite{rohde}, and Garbagnati--van Geemen \cite{vangeemen}). 
\begin{remark}
In addition to the Calabi-Yau examples mentioned above, the case $\I_{1,n}$ occurs in several other geometric examples. For instance, the embedding of $\I_{1,n}$ in a Siegel space, corresponding to the minimal weight Hermitian VHS on $\I_{1,n}$,    occurs in the Deligne--Mostow  uniformization of the moduli of points in $\bP^1$ by complex balls (\cite{delignemostow}). Kondo has realized most of these examples via embeddings of $\I_{1,n}$ into Type $\IV$ domains by using $K3$ surfaces, corresponding to the minimal weight Hermitian VHS on $\I_{1,n}$ of Calabi--Yau type (see \cite{dk}).  The half-twist construction explains (and was motivated by) these examples (see \cite{halftwist0}).
\end{remark}

\subsubsection{Type $\Sym^2 \IV_n$} It is interesting to note that   cases of type  $\Sym^n V$ occur in geometric situations. We thank K. O'Grady for informing us of the following example (unpublished). Let $X\subset \bP^5$ be a generic EPW sextic (cf.\ \cite{ogrady}).  It is known that $X$ is singular along a surface $S$ and that the resolution $\widetilde X$ is a Calabi--Yau fourfold. There exists a smooth double cover $Y\xrightarrow{2:1}X$ branched only along $S\subset X$ which is a  
hyperk\"ahler manifold (cf.\ \cite{ogrady}). It is well known that  $V=H^2_0(Y)$ is of $K3$ type, leading to a VHS  $\calV$ parametrized by a $20$-dimensional type  $\IV$ domain $\calD$. On the other hand, $H^4_0(Y)$ is essentially $\Sym^2 H_0^2(Y)$, and then an irreducible sub-Hodge structure of $H^4_0(\widetilde X)$ is isogeneous to $H^4_0(Y)$. Thus,  the   periods of the Calabi--Yau $4$-folds $\widetilde X$ are parametrized by a Hermitian symmetric domain $\calD$, and the horizontal embedding
$\calD\hookrightarrow \bD$  corresponds to the VHS $\Sym^2\calV$.

\section{Notes on the classification over $\bQ$ of Hermitian VHS of CY type} \label{cmsect}
Theorem \ref{thmclassify} is a classification of Hermitian VHS of CY type over $\bR$, or over $\bQ$ under the additional assumption \ref{conventionired} that the irreducible representation $V$ of $G$ is defined over $\bQ$ and remains irreducible over $\bR$. To understand the situation in general, and to put the real and complex cases in the proper context, we consider the endomorphism algebra of Hodge structures of CY type. Using this, we obtain a decomposition of $V$ into absolutely irreducible pieces, one of which is  of Calabi-Yau type (and thus classified in the previous section). However, to give a complete classification over $\bQ$ of the representations $V$ that might occur seems difficult and beyond the scope of this paper. Thus, we restrict ourselves to some general remarks and a partial classification in the weight $3$ case (Theorem \ref{classify3q}). The weight $2$ ($K3$ type) case (reviewed in \S\ref{k3mult}) is well understood by work of Zarhin \cite{zarhin}.

\subsection{The endomorphism algebra and the decomposition of Hodge structures of CY type}\label{endoalgssect} We recall that for a $\bQ$-Hodge structure $(V,h)$, the endomorphism algebra is defined as
$$E:=\End_{\Hg}(V)=\left\{f\colon V_{\bQ}\to V_{\bQ}\mid f \textrm{ is a $\bQ$-linear map s.t. } f_\bC(V^{p,q})\subseteq V^{p,q} \right\}.$$
A $\bQ$-linear map $f\colon V_{\bQ}\to V_{\bQ}$ preserves the Hodge filtration $\iff$ $f$ commutes with the action of the Hodge group $\Hg(V)$ of $V$ $\iff$ $f$ commutes with the action of the Mumford-Tate group $\MT(V)$ of $V$, 
justifying the notation $\End_{\Hg}(V)$. Under the assumption that $V$ is a simple Hodge structure (i.e.\ it contains no non-trivial sub-Hodge structures),  $E$ is a finite dimensional division algebra over $\bQ$. If $V$ is of CY type, by an argument essentially due to Zarhin \cite[Theorem  1.6(a), Theorem  1.5]{zarhin}, we obtain:
\begin{proposition}\label{thmendo}
Let $(V,h)$ be a simple $\bQ$-Hodge structure of CY type. Then the endomorphism algebra $E$ is a number field. If additionally $(V,h)$ is polarizable, then either  $E$ is a totally real number field (the real case) or  $E$ is a CM field, i.e.\ a purely imaginary extension of a totally real number field (the complex case). 
\end{proposition}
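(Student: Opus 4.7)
The plan is to adapt Zarhin's argument from the $K3$-type case \cite{zarhin} in two stages: first use the one-dimensionality of $V^{n,0}$ to force $E$ to be commutative, then use the polarization to equip $E$ with a positive involution and apply the classical classification of number fields admitting such an involution. Since $V$ is simple, the Hodge group $\Hg$ acts $\bQ$-irreducibly on $V$; by Schur's lemma $E=\End_{\Hg}(V)$ is a finite-dimensional division algebra over $\bQ$. Any $f\in E$ commutes with the Hodge decomposition, hence preserves the line $V^{n,0}$ and acts on it by a scalar $\chi(f)\in\bC$. The resulting $\bQ$-algebra homomorphism $\chi\colon E\to\bC$ has a kernel which is a two-sided ideal of $E$ not containing the identity; since $E$ is a division algebra, this kernel is $0$, so $E$ embeds into $\bC$. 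Thus $E$ is commutative, and being a finite-dimensional $\bQ$-algebra which is a field, $E$ is a number field.

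Now assume $V$ is polarized by $Q$. For $f\in E$ define the adjoint $f^{*}$ by $Q(fx,y)=Q(x,f^{*}y)$. Since $Q$ is $\Hg$-invariant, $f^{*}$ commutes with $\Hg$; since $Q$ pairs $V^{p,q}$ with $V^{q,p}$, $f^{*}$ preserves the Hodge decomposition. Hence $f\mapsto f^{*}$ is a $\bQ$-linear involution of $E$. The key step is to check that this involution is \emph{positive}. Using the Weil operator $C$, the Hodge-Riemann bilinear relations say that $h(u,v):=Q(Cu,\bar v)$ is a positive definite Hermitian form on $V_{\bC}$; since $f$ is defined over $\bQ$ and commutes with $C$ (as $C$ is determined by the Hodge structure, which $f$ preserves), a direct calculation shows $f^{*}$ is the adjoint of $f$ with respect to $h$. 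Then for $f\neq 0$ the operator $ff^{*}$ is a nonzero positive semidefinite Hermitian operator on $(V_{\bC},h)$, so
\begin{equation*}
\mathrm{tr}_{V_{\bQ}/\bQ}(ff^{*})\;=\;\mathrm{tr}_{V_{\bC}}(ff^{*})\;>\;0.
\end{equation*}
Because $V_{\bQ}$ is a finite-dimensional $E$-vector space, the restriction of $\mathrm{tr}_{V_{\bQ}/\bQ}$ to $E$ is a positive integer multiple of $\mathrm{tr}_{E/\bQ}$, so positivity descends to $E$, giving $\mathrm{tr}_{E/\bQ}(ff^{*})>0$ for all nonzero $f\in E$.

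With $(E,*)$ now a number field carrying a positive involution, a classical dichotomy finishes the proof. If $*=\mathrm{id}_{E}$, positivity of $\mathrm{tr}_{E/\bQ}(f^{2})$ says the trace form on $E$ is positive definite, which forces every archimedean place of $E$ to be real, so $E$ is totally real. If $*\neq\mathrm{id}_{E}$, the fixed field $E^{+}:=E^{*}$ has index $2$ in $E$ and is totally real by the previous case; positivity of $*$ then forces it to act as complex conjugation under every embedding $E\hookrightarrow\bC$, so $E$ is a totally imaginary quadratic extension of $E^{+}$, i.e., a CM field. I expect the main technical obstacle to be the verification of positivity of $*$ on $E$: one must compare the trace on $V_{\bQ}$ with the field trace on $E$ and keep careful track of the signs in the Hodge-Riemann relations across the decomposition $V_{\bC}=\bigoplus V^{p,q}$.
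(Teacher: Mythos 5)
Your proof is correct and takes essentially the same route as the paper: the same restriction map $E \to \End_{\bC}(V^{n,0}) \cong \bC$, with trivial kernel because $E$ is a division algebra, shows $E$ is a number field, and the polarized case is handled by the same Rosati involution coming from $Q$. The only difference is that where the paper simply cites Albert's classification of division algebras with positive involution (restricted to number fields), you verify positivity via the Hodge--Riemann form and prove the totally-real/CM dichotomy directly --- a self-contained expansion of the cited black box, not a different argument.
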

\begin{proof}
Since $V^{n,0}$ is preserved by a Hodge endomorphism, we get a non-trivial morphism of algebras $\epsilon\colon\End_{\Hg}(V) \to  \End_\bC(V^{n,0})$ via:
\begin{equation}\label{eqembed}
f\mapsto (f_\bC)_{\mid V^{n,0}}.
\end{equation} 
Since $V^{n,0}$ is $1$-dimensional and $E$ is a division algebra, we conclude $E$ is a subfield of $\bC\cong \End_\bC(V^{n,0})$ 
 and thus $E$ is a number field. In the polarized case, the polarization defines an involution (\textsl{the Rosati involution}) on the endomorphism algebra $E$ (e.g.\ \cite[(1.20)]{moonenmt2}). The proposition follows by restricting Albert's classification of involutive division algebras (e.g.\ \cite[(1.19)]{moonenmt2}) to the case of number fields. 
\end{proof}

We also note the following (cf. \cite[(1.24)]{moonenmt2}):
\begin{lemma}\label{lemrealtype}
With notation as above, if $E$ is a totally real field, $\Hg$ is semi-simple.  \qed
\end{lemma}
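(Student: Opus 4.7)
My plan is to use the standard decomposition of a reductive group and show the center vanishes. Recall that the Hodge group of a polarizable Hodge structure is reductive, so we can write $\Hg$ as an almost direct product $\Hg = Z(\Hg)^0 \cdot \Hg_{der}$, where $\Hg_{der}$ is the (semi-simple) derived subgroup and $Z(\Hg)^0$ is the connected component of the center, which is a $\bQ$-torus. Thus it suffices to show $Z(\Hg)^0 = 1$.

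Next I would observe that $Z(\Hg)^0$ centralizes $\Hg$, so at the level of $\bQ$-points it is contained in $\End_{\Hg}(V)^\times = E^\times$; hence $Z(\Hg)^0$ is a subtorus of $\Res_{E/\bQ}(\Gm)$. Since $\Hg$ preserves the polarization form $Q$, so does $Z(\Hg)^0$, so every $c\in Z(\Hg)^0$ satisfies $c^* = c^{-1}$, where $e\mapsto e^*$ denotes the Rosati involution on $E$ induced by $Q$. The main point is then to identify the Rosati involution on $E$ explicitly. Using the embedding $\epsilon\colon E\hookrightarrow \End_\bC(V^{n,0})=\bC$ from \eqref{eqembed}, a direct computation with $Q(ev_0,\bar v_0)$ for $v_0\in V^{n,0}$ shows $\epsilon(e^*)=\overline{\epsilon(e)}$. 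Since $E$ is totally real, $\epsilon(E)\subseteq\bR$, so $\epsilon(e^*)=\epsilon(e)$, and hence $e^*=e$ for all $e\in E$.

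Combining, every $c\in Z(\Hg)^0$ satisfies $c = c^* = c^{-1}$, i.e.\ $c^2=1$. A connected algebraic torus consisting of $2$-torsion points is trivial, so $Z(\Hg)^0=1$ and $\Hg=\Hg_{der}$ is semi-simple.

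The step I expect to require the most care is the identification of the Rosati involution on $E$ with the ``complex conjugation via $\epsilon$'' — verifying that this is genuinely induced by $Q$ rather than by some auxiliary choice, and keeping track of the $(-1)^n$-symmetry of $Q$ and of the fact that $E$ acts compatibly on $V^{n,0}$ and $V^{0,n}$. Once that identification is in hand, the rest is essentially formal, and amounts to the observation that a positive involution on a totally real field is the identity, coupled with the elementary fact that a connected torus is determined by its characters.
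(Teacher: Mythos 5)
Your proof is correct. The paper itself gives no argument for this lemma --- it is stated with a \qed and a pointer to \cite[(1.24)]{moonenmt2} --- and your proposal is essentially the standard proof found in that reference: since $\Hg$ is reductive, reduce to showing the connected center (a $\bQ$-subtorus of $\Res_{E/\bQ}\Gm$, as $E=\End_{\Hg}(V)$ and $V$ is simple) is killed by the condition $c^*c=1$ coming from $\Hg\subseteq\Aut(V,Q)$, together with triviality of the Rosati involution on $E$.

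The one place where you genuinely depart from the usual treatment is in how you get $e^*=e$: the standard route (and the one implicit in the paper's Proposition \ref{thmendo}) invokes positivity of the Rosati involution plus Albert's classification, whereas you exploit the CY hypothesis directly --- $V^{n,0}$ is a line, so the relation $\epsilon(e)\,Q(v_0,\bar v_0)=Q(ev_0,\bar v_0)=Q(v_0,e^*\bar v_0)=\overline{\epsilon(e^*)}\,Q(v_0,\bar v_0)$, with $Q(v_0,\bar v_0)\neq 0$ by nondegeneracy of the pairing between $V^{n,0}$ and $V^{0,n}$, identifies Rosati with complex conjugation under $\epsilon$, which is the identity when $E$ is totally real. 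This is a real simplification (no Albert needed), and it correctly uses that $e^*\in E\subseteq\End_\bQ(V)$ is defined over $\bQ$, hence acts on $V^{0,n}$ by the conjugate scalar; note it is precisely here that the CY assumption from Notation \ref{assumptiononv} enters, so the argument would not transfer to non-CY simple Hodge structures, where $E$ need not be commutative. Two routine points you state without detail are fine but worth knowing: the passage from $Z(\Hg)^0(\bQ)\subseteq E^\times$ to the scheme-theoretic inclusion $Z(\Hg)^0\subseteq\Res_{E/\bQ}\Gm$ uses Zariski density of $\bQ$-points in a $\bQ$-torus (or the identity $\End_{\Hg_F}(V_F)=E\otimes_\bQ F$, which the paper itself uses in Proposition \ref{propirred}), and the relation $c^*c=1$ holds functorially in points since $\Hg\subseteq\Aut(V,Q)$ as algebraic groups, so $c^2=1$ holds schematically and forces the connected torus to be trivial.
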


\begin{notation}\label{assumptiononv}
In what follows, we assume $(V,h,Q)$ is a simple weight $n$ polarized Hodge structure of CY type. Let $E$ be its field of endomorphisms. We denote by $\bar \ $ the complex conjugation on $E$ and by $E_0=\{x\in E\mid \bar x=x\}$ the totally real subfield of $E$. Let $d=[E_0:\bQ]$ and let $\{\sigma_1,\dots,\sigma_{d}\}$ be the embeddings of $E_0$ in $\bR$. Note that $E$ is endowed with a preferred embedding in $\bC$, $\epsilon\colon  E\hookrightarrow \bC$, given by \eqref{eqembed}. The restriction $\epsilon_{\mid E_0}$ gives a preferred embedding of $E_0$ into $\bR$, which we denote also by $\epsilon$ and we set $\sigma_1=\epsilon$. 
\end{notation}

Note that $V$ is naturally an $E_0$-vector space (via $(f,v)\in \End(V)\times V\mapsto f(v)\in V$) with $\dim_\bQ V=d\dim_{E_0} V$. Then, we have an eigenspace decomposition
\begin{equation}\label{decompositionr}
V_{\bR}=\bigoplus_{i=1}^d V_i,
\end{equation}
where $V_i:=\{ v\in V_\bR\mid f(v)=\sigma_i(f)\cdot v \textrm{ for all } f\in F\}$, or equivalently $V_i= V\otimes_{E_0,\sigma_i}\bR$. By construction, $V_i$ are $\bR$-Hodge structures (defined over $E_0$) and $V_1$ is  a weight $n$ Hodge structure of CY type. The other components $V_i$ have smaller Hodge level.
Similarly, if $E_0\neq E$ (the complex case), we have a refinement of \eqref{decompositionr}:
\begin{equation}\label{decompositionc}
V_{\bC}=\bigoplus_{i=1}^d (V'_i\oplus V_i''),
\end{equation}
with $V_i'\oplus V_i''=V_i\otimes_\bR\bC$ and $\overline{V_i'}=V_i''$. Note that $V_i$ (and $V_i'$) are representations of the Hodge group (and its derived subgroup).  We are interested in the geometric irreducibility of these representations. Specifically, we get: 
 
\begin{proposition}\label{propirred}
Let $V$ be a simple $\bQ$-Hodge structure of CY type. Let $G$ be derived subgroup of  $\Hg(V)$, and $E$ the endomorphism algebra. Consider the decompositions \eqref{decompositionr} and \eqref{decompositionc} as above. The following hold:
\begin{itemize}
\item[i)] In the real case ($E_0= E$),  $V_i$ are absolutely irreducible $G_\bR$-representa\-tions, i.e. $V_{i,\bC}$ is an irreducible $G_{\bC}$-representation;
\item[ii)] In the complex case ($E_0\subsetneqq E$),  $V'_i$ and $V''_i$ are irreducible $G_\bC$-representations. 
\end{itemize}
\end{proposition}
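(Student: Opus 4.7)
My plan is to decompose $V_{\bC}$ simultaneously as an $E$-module and as an $\Hg_{\bC}$-representation, use that $E$ is a field to force both decompositions to have length $[E:\bQ]$ and hence coincide, pass from $\Hg_\bC$- to $G_\bC$-irreducibility by Schur, and finally match the resulting pieces with the $V_i$ or $V_i', V_i''$ in the statement.

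Since $V$ is a simple $\bQ$-Hodge structure, it is irreducible as an $\Hg$-representation over $\bQ$, with $\End_{\Hg}(V) = E$ a number field (Proposition~\ref{thmendo}). Complexifying, $\End_{\Hg_{\bC}}(V_{\bC}) = E\otimes_{\bQ}\bC \cong \prod_{\tau\colon E\hookrightarrow \bC}\bC_\tau$ is commutative. If the $\Hg_{\bC}$-isotypic decomposition is $V_{\bC} = \bigoplus_j m_j W_j$ for distinct irreducibles $W_j$, then $\End_{\Hg_{\bC}}(V_{\bC})\cong \prod_j M_{m_j}(\bC)$, and commutativity forces every $m_j=1$ with exactly $[E:\bQ]$ components. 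On the other hand, the idempotents of $E\otimes_{\bQ}\bC$ partition $V_{\bC}$ into $[E:\bQ]$ eigenspaces $V_\tau := V\otimes_{E,\tau}\bC$, each $\Hg_{\bC}$-stable because $E$ commutes with $\Hg$; by the multiplicity-free property, each $V_\tau$ must coincide with one of the $W_j$ and is therefore $\Hg_{\bC}$-irreducible. Since $\Hg$ is reductive with derived subgroup $G$, Schur applied to the irreducible $V_\tau$ shows $Z(\Hg_{\bC})$ acts by a character, so any $G_{\bC}$-subrepresentation of $V_\tau$ is automatically $\Hg_{\bC}$-stable, forcing $V_\tau$ to be $G_{\bC}$-irreducible.

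It remains to match the $V_\tau$ with the pieces in the proposition. In the real case $E=E_0$ has $d$ real embeddings $\sigma_i$, and $V_{i,\bC} = (V\otimes_{E_0,\sigma_i}\bR)\otimes_\bR \bC = V\otimes_{E_0,\sigma_i}\bC = V_{\sigma_i}$ is a single $V_\tau$, giving (i). In the complex case, each $\sigma_i$ extends to a conjugate pair $\tau_i,\bar\tau_i\colon E\to \bC$, so $E\otimes_{E_0,\sigma_i}\bC \cong \bC_{\tau_i}\oplus \bC_{\bar\tau_i}$ and $V_i\otimes_\bR\bC = V_{\tau_i}\oplus V_{\bar\tau_i}$; these two summands are swapped by complex conjugation, so they must be the $V_i'$ and $V_i''$, each $G_{\bC}$-irreducible, giving (ii). The main technical point I expect will require care is the base-change identity $\End_{\Hg_{\bC}}(V_{\bC}) = E\otimes_{\bQ}\bC$ (standard but worth checking), together with ensuring that commutativity of $E$ as a field (not merely as a division algebra) is what produces the multiplicity-free decomposition — this is precisely what singles out the CY setting, since for a general polarizable Hodge structure $E$ could be noncommutative and multiplicities could appear.
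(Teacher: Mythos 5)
Your proof is correct and takes essentially the same route as the paper's: both arguments rest on the base-change identity $\End_{\Hg_F}(V_F)\cong \End_{\Hg}(V)\otimes_\bQ F$ together with a comparison of the number of summands in the eigenspace decomposition against the size of the commutative semisimple algebra $E\otimes_\bQ F$ (the paper counts dimensions and forces equality, you use the idempotents and multiplicity-freeness of a commutative endomorphism algebra, which is the same mechanism). The only cosmetic difference is that you work over $\bC$ throughout and pass from $\Hg_\bC$- to $G_\bC$-irreducibility uniformly via Schur's lemma applied to the center, whereas the paper splits cases, invoking $\Hg=G$ in the real case (Lemma \ref{lemrealtype}) and the one-dimensionality of torus representations in the complex case.
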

\begin{proof}
We make two basic remarks. First, let $k$ be a subfield of $\bC$,  
let $H$ be an algebraic group defined over $k$, and let $W$ an $H$-representation defined over $k$. Using the fact that $H(k)$  is dense in $H(\bC)$, it follows easily that 
$$\End_{H_F}(W_F)=\End_H(W)\otimes_k F$$
for any extension $k\subseteq F\subseteq \bC$.  
Secondly, $\dim_F \End_{H_F}(W_F)$ is at least equal to the number of irreducible $H_F$ factors (corresponding to scaling on each factor). 

Returning to our situation, in the real case ($E_0=E$), $\Hg$ is semi-simple, hence $\Hg = G$.  Moreover, $E_0\otimes_\bQ\bR \cong \R^d$ and
$$\dim_\bR \End_{\Hg_\bR}(V_\bR)=\dim_\bR E_0\otimes_\bQ\bR=d.$$
On the other hand, from \eqref{decompositionr}, $\dim_\bR \End_{\Hg_\bR}(V_\bR)\ge d$. Since the equality holds, we conclude that $V_i$ are irreducible representations with $\End_{\Hg_\bR}(V_i)=\bR$, showing that the $V_i$ are of real type.  In the complex case, $\Hg_\bC$ is an almost direct product of $G_\bC$ and a torus $(\bC^*)^k$. Since the representations of the torus are $1$-dimensional (corresponding to twisting by characters as in \eqref{representationtwist} below), the claim then follows by an argument  similar to that in the real case.
\end{proof}

In particular, the situation studied in Section \ref{sectclassify} is described as follows: 
\begin{corollary}\label{corcm}
Let $(V,h)$ be a polarizable $\bQ$-Hodge structure of CY type.  Let $\Hg$ be the Mumford--Tate group and $G=\Hg_{\textrm{der}}$ the derived subgroup (assumed to be non-trivial). Assume \ref{conventionired} holds (i.e.\ $V_\bR$ is irreducible as a $G_\bR$-representation), then either
\begin{itemize}
\item[i)] the real case: $E=\bQ$ and $V_\bR$ is a $G_\bR$-representation of real type, or
\item[ii)] the complex case: $E=\bQ[\sqrt{-d}]$ for some positive square-free integer $d$  and $V_\bC$ is a $G_\bR$-representation of complex type.
\end{itemize}
Conversely, let $V$, $G$, and $E$ be as above. Assume that $V$ is an irreducible $G$-representation and that $E$ is either $\bQ$ or $\bQ[\sqrt{-d}]$. Then $V_\bR$ is an irreducible $G_\bR$-representation. \qed
\end{corollary}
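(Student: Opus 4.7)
The plan is to deduce the corollary directly from the decomposition \eqref{decompositionr} together with Propositions \ref{thmendo} and \ref{propirred}. The structural observation driving the argument is that \eqref{decompositionr} exhibits $V_\bR = \bigoplus_{i=1}^{d} V_i$ as a decomposition into $G_\bR$-subrepresentations (because $G$ commutes with the action of $E \supseteq E_0$), where $d = [E_0 : \bQ]$. Consequently $V_\bR$ is $G_\bR$-irreducible if and only if $d = 1$, i.e., $E_0 = \bQ$.

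For the forward direction, I would first observe that an irreducible $V_\bR$ forces $V$ to be a simple $\bQ$-Hodge structure (any proper sub-Hodge-structure would yield a nontrivial $G_\bR$-subrepresentation upon tensoring with $\bR$). Proposition \ref{thmendo} then classifies $E$ as totally real or CM, and the constraint $E_0 = \bQ$ collapses this to $E = \bQ$ in the real case and $E = \bQ[\sqrt{-d}]$ for some positive square-free $d$ in the complex case. The representation type of $V_\bR$ is then read off from Proposition \ref{propirred}: in case (i) one has $V_{1,\bC}$ irreducible as a $G_\bC$-representation, hence $V_\bR$ is of real type; in case (ii) one has $V_{1,\bC} = V_1' \oplus V_1''$ with $V_1'$, $V_1''$ non-isomorphic $G_\bC$-irreducibles, hence $V_\bR$ is of complex type. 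The single delicate step is ruling out the quaternionic possibility $V_1' \cong V_1''$ in case (ii), which is the main obstacle in principle; however, this is already packaged in Lemma \ref{reptype} (whose proof uses the CY weight argument of Lemma \ref{mainlemma} to force the highest weight of $V_+$ to be a multiple of the cominuscule weight $\varpi_i$), so one can simply invoke it under the Hermitian-type hypothesis of Convention \ref{conventionired}.

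The converse is then immediate: if $E \in \{\bQ, \bQ[\sqrt{-d}]\}$, then $E_0 = \bQ$, so $d = 1$ in \eqref{decompositionr} and $V_\bR = V_1$. Since $V$ is $G$-irreducible (in particular simple as a $\bQ$-Hodge structure) and of CY type, Proposition \ref{propirred} applies and asserts directly that $V_1$ is an irreducible $G_\bR$-representation, completing the proof.
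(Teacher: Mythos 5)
Your forward direction is sound and follows exactly the assembly the paper intends (the paper states the corollary with no written proof, as an immediate consequence of the surrounding results): irreducibility of $V_\bR$ forces $d=[E_0:\bQ]=1$ in the decomposition \eqref{decompositionr}, Proposition \ref{thmendo} then pins $E$ down to $\bQ$ or an imaginary quadratic field, and the real/complex dichotomy is read off from Proposition \ref{propirred}, with the quaternionic case excluded through Lemma \ref{mainlemma} and Lemma \ref{reptype} --- a legitimate appeal in that direction, since Convention \ref{conventionired} is a hypothesis there. One small overstatement: your opening claim that $V_\bR$ is $G_\bR$-irreducible ``if and only if'' $d=1$ is not justified at that stage (only the implication ``irreducible $\Rightarrow d=1$'' is immediate), though you do return to the other implication later.

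The genuine gap is in the converse, complex case. Proposition \ref{propirred}(ii) does \emph{not} ``assert directly'' that $V_1=V_\bR$ is an irreducible $G_\bR$-representation; it asserts only that the $E$-eigenspaces $V_1'$ and $V_1''$ of \eqref{decompositionc} are irreducible $G_\bC$-representations. To descend one must argue: if $V_\bR=A\oplus B$ were a proper $G_\bR$-splitting, then $A_\bC$ and $B_\bC$ would be conjugation-stable irreducible constituents of $V_\bC$, which is impossible provided $V_1'\not\cong V_1''$ as $G_\bC$-modules. But that non-isomorphy is precisely what is not free: the endomorphism count $\dim_\bC\left(\End_{\Hg_\bC}(V_\bC)\right)=\dim_\bC (E\otimes_\bQ\bC)=2$ rules out $V_1'\cong V_1''$ only as $\Hg_\bC$-modules, and the two constituents could \emph{a priori} be isomorphic as $G_\bC$-modules, differing merely by the central character of $\Hg$. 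Equivalently: since $E\otimes_\bQ\bR\in\{\bR,\bC\}$ is a division algebra with no nontrivial idempotents, what you get for free is irreducibility of $V_\bR$ as an $\Hg_\bR$-module; irreducibility over the derived group $G_\bR$ additionally requires that $\End_G(V)$ --- which contains $E$ but could \emph{a priori} be a quaternion algebra over $\bQ$ split at $\bR$ --- acquire no idempotents after $\otimes_\bQ\bR$. In the real case this is automatic: $E=\bQ$ forces the center of $\Hg$ to act by rational scalars, so $\End_G(V)=\End_{\Hg}(V)=\bQ$ (or invoke Lemma \ref{lemrealtype}), and there Proposition \ref{propirred}(i) does literally assert absolute irreducibility of $V_1$. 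In the complex case, however, none of the cited results closes this loophole, and your fallback, Lemma \ref{reptype}, is unavailable: it enters through Lemma \ref{mainlemma}, whose hypotheses include Convention \ref{conventionired}, i.e.\ precisely the irreducibility of $V_\bR$ that the converse is trying to establish --- invoking it here would be circular. As written, your converse therefore proves less than claimed; it needs a separate argument (specific to the CY condition) excluding a noncommutative commutant $\End_G(V)\supsetneq E$ split at the real place, and no such argument appears in your proposal.
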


\begin{definition}\label{weakCMdef}
We recall that a Hodge structure $(V,h)$ is said to have \textsl{weak real multiplication}, respectively  \textsl{weak complex multiplication (CM)}, by $F$ if there is an inclusion $F\hookrightarrow E=\End_{\Hg}(V)$ for a totally real field, respectively CM field $F$ (see \cite[Definition  V.B.1(i)]{mtbook}). 
\end{definition} 

If $V$ has weak CM, then one can apply the half-twist construction of van Geemen \cite{halftwist0} (see \cite[\S V.B]{mtbook} for a fuller discussion). The previous corollary gives a more intrinsic description of the distinction between the real and complex case and explains the occurrence of half-twists in Theorem \ref{thmclassify}.

\subsection{Some remarks on the Hodge group and the structure of $V$}
Now suppose that $\rho\colon G\to \GL(V)$ is a Hodge representation defined over $\bQ$ as in Section \ref{sectclassify} (i.e. $\rho$ gives a Hermitian VHS of CY type).  However,  we do not assume that Convention \ref{conventionired} holds. Thus we assume that $V$ is  irreducible  over $\bQ$, but not  necessarily over $\R$. Without loss of generality, we assume $G$ is simply connected and almost simple.  Following \cite[\S10, p. 54]{milne}, there exists a geometrically almost simple group $G'$ defined over a totally real number field $F$ such that $G=\Res_{F/\bQ} G'$.
Thus,
\begin{equation}\label{decomposeg}
G_\bR=\prod_{j\in \calS_F(\bR)} G_j,
\end{equation}
where $\calS_F(\bR)=\Hom(F,\bR)$ is the set of real embedding of $F$ and $G_j=G'\otimes_{F,j}\bR$ are the twisted forms of $G'$. Since $\Gal(\bar \bQ/\bQ)$ permutes the factors $G_j$, all of the $G_{j,\bC}$ will have the same type (i.e. the same irreducible root system $R$). While some of the $G_j$ may be compact,  there is at least one non-compact factor. For each non-compact factor $G_j$, we have a cocharacter $\varphi_j\colon U(1)\to G_{j,\bR}$, which (due to the Hermitian assumption) corresponds to a special root of $R$ as in \S\ref{sectprelim}. For a compact factor $G_j$, the corresponding cocharacter $\varphi_j$ is trivial.
Note also that $G_j$ can be chosen arbitrarily, i.e. for a prescribed choice of real forms $G_j$ there exists a $\bQ$-form $G$ of type $G=\Res_{F/\bQ}G'$ with associated almost simple real factors $G_j$ (see \cite[Proposition 10.10]{milne}). Since $\rho(G)$ is the derived subgroup of the generic Hodge or Mumford-Tate group of the corresponding VHS $\calD$ of Hermitian type, the results of \S\ref{endoalgssect} apply to this situation, where $E$ denotes the endomorphism algebra of a Hodge structure at a general point of $\calD$.

\subsubsection{}Recall that the representation $\rho\colon G\to \GL(V)$ factors through the Hodge group $\Hg$ and that $\rho(G)=\Hg_{der}$. Thus, at the level of Lie algebras we have 
\begin{equation}\label{splithg}
\hg_\bR=\fa\oplus \fg_1\oplus \dots\oplus \fg_q,
\end{equation}
where $\fa$ is an abelian Lie algebra, and $\fg_j$ are the Lie algebras of $G_j$ (compare \eqref{liehodge}). An irreducible factor $W$ of the representation $V_\bC$ will be then of type 
\begin{equation}\label{representationtwist}
\chi_0\otimes W_1\otimes\dots\otimes W_q,
\end{equation}
where $W_j$ are irreducible $G_{j,\bC}$-representations and $\chi_0$ corresponds to twisting the weights of the Hodge structure on $W_1 \otimes \cdots \otimes W_q$ by a character. By Proposition \ref{propirred}, the irreducible components $W$ of $V_\bC$ are $V_{i,\bC}$ in the real case or $V'_{i}$ and $V_i''$ in the complex case. Thus, 
$$\Hg_\bR\subset \GL(V_1)\times \dots \times \GL(V_d),$$
and we get 
irreducible representations (defined over $E_0$)
$$\rho^i\colon G_\bR\to \GL(V_i).$$
Writing $G_\R$ as a product of  geometrically simple factors, we obtain absolutely irreducible representations
\begin{equation}\label{eqrhoij}
 \rho_j^i\colon G_j\to \GL(W_j^i)
 \end{equation}
 where $V_i=\bigotimes_j W_j^i$ and the $\rho^i=\bigotimes_j \rho_j^i\colon G\to \GL(V_i)$ are  the projections of $\rho\colon G\to \GL(V)$ on the $i^{\text{th}}$ factors of the decomposition \eqref{decompositionr}.  In the real case, the Galois group  $\Gal(\bar \bQ/\bQ)$ then permutes the  factors $\rho_j^i$, and a similar statement holds in the complex case. 
  
 \begin{remark}
 Note that by composing $ \rho_j^i$ with the cocharacter $\varphi_j$ associated to $G_j$, we get Hodge structures on  each $W_j^i$.  In particular, the compact factors $G_j$ give trivial Hodge structure of type $(0,0)$ (up to normalizing the weight) on $W_j^i$. 
 \end{remark}

\subsubsection{} As before we can view $V$ as an $E_0$-vector space. As $\bQ$-vector space we have $V=\Res_{E_0/\bQ} V$. Furthermore, in the polarized case, there is a symmetric or symplectic $E_0$-linear form $\tilde Q$ on $V$ such that the trace of $\tilde Q$ is the original polarization $Q$ on $V$.  If $E_0\subsetneq E$, then $\tilde Q$ can be taken to be a $E_0$- Hermitian form with respect to the extension $E_0\subset E$. It follows immediately that 
\begin{equation*}
\Hg\subseteq \left\{\begin{matrix}\Res_{E_0/\bQ}\Sp_{E_0}(V,\tilde Q)&\textrm{if  } E \textrm{ is totally real and odd weight}\\
\Res_{E_0/\bQ}\SO_{E_0}(V,\tilde Q)&\textrm{if  } E \textrm{ is totally real and even weight}\\\
\Res_{E_0/\bQ}\U_{E}(V,\tilde Q)& \textrm{if } E\textrm{ is a CM field }
\end{matrix}\right.
\end{equation*}
(compare \cite[(1.22)]{moonenmt2}). 

 A lower bound on the Hodge group can be obtained by considering the subgroup
 $$H'_{\bR}=(\Hg_\bR\cap \GL(V_1))\times\dots\times  (\Hg_\bR\cap \GL(V_d))\subseteq \Hg_\bR.$$
Since $\Hg$ and the representation $V$ are defined over $\bQ$, $H'$ is in fact an algebraic group over $\bQ$ and of type $\Res_{E_0/\bQ}(H)$ for some algebraic group $H$ over $E_0$ (compare \cite[(3.17)]{moonenmt2}). We conclude:
\begin{equation}\label{boundhodge}
\Res_{E_0/\bQ}(H)\subseteq\Hg\subseteq \left\{\begin{matrix}\Res_{E_0/\bQ}\Sp_{E_0}(V,\tilde Q)\\
\Res_{E_0/\bQ}\SO_{E_0}(V,\tilde Q)\\
\Res_{E_0/\bQ}\U_{E}(V,\tilde Q).
\end{matrix}\right.
\end{equation}
In general, both inclusions  can be strict.

\subsection{The case of $K3$ type (cf.\ \cite{zarhin}, \cite{vgk3})}\label{k3mult} For VHS of $K3$ type, both inclusions in \eqref{boundhodge} are equalities. First note that in the decomposition \eqref{decompositionr}, $V_1$ is a Hodge structure of $K3$ type and $V_i$ for $i\neq 1$ are Hodge structures of type $(1,1)$. Thus, the Hodge structure of $V$ is completely determined by that on $V_1$. It follows  $\Hg=\Res_{E_0/\bQ} H_1$, where $H_1=\Hg_{E_0}\cap \GL(V_1)$ (compare \cite[(3.17)]{moonenmt2}). The other equality follows from the classification of the Hermitian VHS of $K3$ type (given by $H_1\to \GL(V_1)$) as in Section \ref{sectclassify} (N.B. since the period domain $\bD$ of $K3$ type is already Hermitian symmetric, $H_1$ is automatically of Hermitian type). In conclusion,  the following holds in the real case (see \cite[Theorem  2.8]{vgk3}, \cite[Theorem  2.2.1]{zarhin}; see \cite[Theorem  2.3.1]{zarhin} for the complex case):

\begin{theorem}[Zarhin]
Let $(V,h,Q)$ be a $K3$-type Hodge structure with $E=\End_{\Hg}(V)=E_0$ a totally real field. Then, the Hodge group $\Hg$ satisfies:
$$\Hg=\Res_{E_0/\bQ}\SO(V,\tilde Q), \ \Hg(\bR)\cong \SO(2,m-2)\times \SO(m,\bR)^{d-1},$$
and then $\Hg(\bC)\cong \SO(m,\bC)^d$, where $\tilde Q$ is the unique symmetric $E_0$-bilinear form on $V$ such that $\mathrm{Nm}(\tilde Q)=Q$. The representations of these Lie groups on the $d\cdot m$-dimensional vector spaces $V_\bR$ and $V_\bC$ are the direct sum of the standard representations of the factors (N.B. in the notation of \eqref{eqrhoij}, only the representation $\rho_i^i$ are non-trivial, and they are permuted transitively by $\Gal(\bar \bQ/\bQ)$). 
\end{theorem}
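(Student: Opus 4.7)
The plan is to follow the outline given in the excerpt: reduce the statement to the classification in Section \ref{sectclassify} by exploiting the very special structure of polarized weight-two Hodge structures with $h^{2,0}=1$. Throughout, I normalize so that $\sigma_1=\epsilon$, and hence $V^{2,0}\subset V_{1,\bC}$.

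First I would analyze the decomposition $V_\bR=\bigoplus_{i=1}^d V_i$ of \eqref{decompositionr}. Because $E=E_0$ is totally real, the Rosati involution is trivial on $E_0$, so $Q$ is the trace (from $E_0$ to $\bQ$) of a unique symmetric $E_0$-bilinear form $\tilde Q$ on $V$, and the $V_i$ are mutually orthogonal polarized $\bR$-sub-Hodge structures of $V_\bR$. By the choice of $\sigma_1$, $V_1$ is of $K3$ type, whereas for $i\geq 2$ the weight-two polarized Hodge structure $V_i$ satisfies $h^{2,0}(V_i)=0$ and is therefore pure of type $(1,1)$. The Riemann bilinear relations then force $\tilde Q_{\sigma_i}$ to be definite on $V_i$ for $i\geq 2$.

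Next I would prove $\Hg=\Res_{E_0/\bQ}H_1$, where $H_1=\Hg_{E_0}\cap\GL(V_1)$. Since each $V_i$ with $i\geq 2$ is purely of type $(1,1)$, the generating circle $\U(1)\subset\bS$ acts on $V_i$ by the character $z\mapsto z^{p-q}=1$, hence trivially. Under the embedding $\Hg_\bR\hookrightarrow\prod_i(\Hg_\bR\cap\GL(V_i))$ coming from the $E_0$-eigenspace decomposition, the circle $h(\U(1))$ therefore lies entirely in the first factor. Taking the smallest $\bQ$-algebraic subgroup of $\GL(V)$ containing these real points and commuting with $E_0$, and using the Galois permutation of the $V_i$, one obtains $\Hg=\Res_{E_0/\bQ}H_1$ exactly as in \cite[(3.17)]{moonenmt2}. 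The upper bound in \eqref{boundhodge} then specializes to the inclusion $H_1\subseteq\SO_{E_0}(V,\tilde Q)$.

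Finally I would invoke the classification of Hermitian VHS of $K3$ type. The period domain $\bD$ for $K3$-type Hodge structures is itself Hermitian symmetric, so $H_1$ is automatically of Hermitian type, and by Theorem \ref{thmclassify} together with the cases $\IV_{2n-1}$ and $\IV_{2n-2}$ listed in \S\ref{listrep}, the only possibility for the $\bR$-irreducible representation $V_1$ of $H_1$ is the standard representation of $\SO(2,m-2)$. This identifies $H_1$ with $\SO_{E_0}(V,\tilde Q)$ as algebraic groups over $E_0$, and combined with the definiteness of $\tilde Q_{\sigma_i}$ for $i\geq 2$, restriction of scalars yields $\Hg(\bR)\cong\SO(2,m-2)\times\SO(m,\bR)^{d-1}$ and $\Hg(\bC)\cong\SO(m,\bC)^d$. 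The representation-theoretic statement that only the diagonal factors $\rho_i^i$ in \eqref{eqrhoij} are nontrivial, and that they are permuted transitively by $\Gal(\bar\bQ/\bQ)$, is then automatic from the restriction-of-scalars structure.

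The main obstacle is the middle step: showing that the lower bound $\Res_{E_0/\bQ}H_1\subseteq\Hg$ already coincides with $\Hg$, i.e.\ that $\Hg$ acts on the non-$K3$ components $V_i$ in the smallest possible way. This ultimately relies on the fact that the generic Hodge structure is concentrated on $V_1$ — a phenomenon peculiar to $K3$ type which fails in general for Calabi--Yau Hodge structures of higher weight, and which is what makes Zarhin's classification so clean in this case.
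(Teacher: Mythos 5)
Your proposal is correct and takes essentially the same route as the paper's own argument in \S\ref{k3mult}: the eigenspace decomposition \eqref{decompositionr} with $V_1$ of $K3$ type and $V_i$ ($i\geq 2$) pure of type $(1,1)$, the identity $\Hg=\Res_{E_0/\bQ}H_1$ via \cite[(3.17)]{moonenmt2} because the Hodge structure is concentrated on $V_1$, and then the identification $H_1=\SO_{E_0}(V,\tilde Q)$ from the classification of Hermitian VHS of $K3$ type in Section \ref{sectclassify}, using that the $K3$ period domain is already Hermitian symmetric so that $H_1$ is automatically of Hermitian type. The only blemish is notational: you write the inclusion as $\Hg_\bR\hookrightarrow\prod_i(\Hg_\bR\cap\GL(V_i))$, whereas the product of the intersections is the lower-bound subgroup $H'_\bR\subseteq\Hg_\bR$ of \eqref{boundhodge}; what you need (and clearly intend) is $\Hg_\bR\subseteq\prod_i\GL(V_i)$, with $h(\U(1))$ landing in the $\GL(V_1)$ factor.
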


\begin{remark}
It is not difficult to construct a Hermitian VHS of $K3$ type with the prescribed weak real multiplication by any totally real field $E_0$. Essentially, after fixing an embedding $\epsilon\colon E_0\hookrightarrow \bR$, one considers  an $E_0$-vector space $V$ of dimension $m$ with an $E_0$-symmetric bilinear form $\tilde Q$ of signature $(2,m-2)$ and proceeds as usual. The result will be a VHS of $K3$ type over the   Type $\IV$ domain  $$\calD=\left\{\omega\in \bP(V\otimes_{E_0,\epsilon}\bC)\mid \tilde Q(\omega,\omega)=0,\ \tilde Q(\omega,\bar\omega) >0\right\}.$$
The only difference to the usual situation  is that the Hodge structures involved are defined over $E_0$. By considering the action of  $\Res_{E_0/\bQ}\SO(V,\tilde Q)$ on $\Res_{E_0/\bQ}V$,  one obtains a VHS defined over $\bQ$. In order for this VHS to be of $K3$ type, one needs the  forms  on $V_\R$ defined by $\tilde Q$, but using the embeddings $\sigma \colon E_0\to \R$ with $\sigma \neq \epsilon$,  to be negative definite. For example, this can be achieved by using a quadratic form of type:
$$\tilde Q(x_1,\dots,x_n)=a_1x_1^2+a_2 x_2^2-x_3^2-\dots-x_{m}^2,$$
where  $a_1,a_2\in E_0$ such that $\epsilon(a_i)>0$ and $\sigma(a_i)<0$ for all other $\sigma\in\calS_{E_0}(\bR)$ (see \cite{vgk3}, esp. \cite[Lem. 3.2]{vgk3}, for further details).
\end{remark}

\subsection{The case of weight three CY type}\label{cyexampleq}
We close by making some remarks on the Calabi-Yau $3$-fold case. We restrict the discussion to the real case: $E=E_0$ is  a totally real field. The case $E=\bQ[\sqrt{-d}]$ was already discussed in Section \ref{sectclassify}.  The general complex case ($E$ is an arbitrary CM field) is a mixture of the real case and the CM type case, i.e. in addition to the study of $G_\bR$ representations $V_i$ one needs to keep track also of the shifts of the weights given by the (connected) center of the Hodge group. This leads to subtle Galois theoretic issues that go beyond the scope of this paper. A general reference is the discussion of 
 the CM type case (i.e. $E$ is CM with $[E:\bQ]=\dim_\bQ V$) in \cite[Chapter V]{mtbook} (esp. \cite[Proposition V.C.2]{mtbook}, which applies in the Calabi-Yau case).

It is easy to produce a Hermitian VHS of CY type with weak real multiplication by considering $V=W_1\otimes W_2$, where $W_1$ is a VHS of elliptic type and $W_2$ is a  VHS of $K3$ type with  real multiplication as in \S\ref{k3mult}.  Therefore, we focus on the primitive cases given by  Corollary \ref{thmclassify3} (i) and give the following partial classification.

\begin{theorem}\label{classify3q}
Let $G$ be an almost simple $\bQ$-algebraic group, and $\rho\colon G\to \GL(V)$ an irreducible Hodge representation giving a Hermitian VHS $\calV$ of Calabi-Yau $3$-fold type. Assume that the generic endomorphism algebra $\End_{\Hg}(V)$ is a totally real field $E_0\neq \bQ$. Then
\begin{itemize}
\item[(i)] As an $\bR$-VHS $\calV$ decomposes as a direct sum $\bigoplus_{i=1}^d\calV_i$ (with $d=[E_0:\bQ]$) such that $\calV_1$ is an irreducible Hermitian VHS of Calabi-Yau $3$-fold type (classified by Cor. \ref{thmclassify3}) and $\calV_i$ for $i>1$ are Tate twists of VHS of abelian variety type (i.e.\ weight one). 
\item[(ii)] Assume additionally that $\calV_1$ is primitive in the sense of \ref{thmclassify3}(i) (i.e. the associated domain $\calD_1$ is irreducible, of tube type, and rank $3$). Then the only two possibilities are 
\begin{itemize}
\item[a)] $\calV_1$ is of type $(A_5,\alpha_3;\varpi_3)$ and $\calV_i$ (for $i>1$) are of type $(A_5,\alpha_1;\varpi_3)$;
\item[b)] $\calV_1$ is of type $(D_6,\alpha_6;\varpi_6)$ and  $\calV_i$ (for $i>1$) are of type $(D_6,\alpha_1;\varpi_6)$.
\end{itemize}
Furthermore, both of these cases can be realized as $\bQ$-VHS $\calV$ as above.   
\end{itemize}
\end{theorem}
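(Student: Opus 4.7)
My plan is to deduce (i) directly from the eigenspace decomposition of \S\ref{endoalgssect}, and to prove (ii) by enumerating the admissible Hermitian real forms of $G'_{\bC}$ at each real place of $E_{0}$ and computing the cocharacter eigenvalues on the given cominuscule representation. For (i), since $E=E_{0}$ is totally real of degree $d$, the decomposition \eqref{decompositionr} gives $V_{\bR}=\bigoplus_{i=1}^{d}V_{i}$ as a direct sum of sub-$\bR$-Hodge structures of weight $3$. The canonical embedding $\epsilon\colon E_{0}\hookrightarrow\bR$ of Notation~\ref{assumptiononv}, defined by the $E_{0}$-action on the line $V^{3,0}$, singles out the unique summand $V_{1}$ (the $\epsilon$-eigenspace) with $V^{3,0}\subset V_{1,\bC}$, so $h^{3,0}(V_{1})=1$ while $h^{3,0}(V_{i})=0$ for $i>1$. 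Complex conjugation then also forces $h^{0,3}(V_{i})=0$ for $i>1$, so each such $V_{i}$ is concentrated in bidegrees $(2,1)$ and $(1,2)$, which is exactly the Tate twist by $-1$ of a polarized weight one Hodge structure, i.e.\ of abelian variety type. By Proposition~\ref{propirred} the summand $V_{1}$ is an absolutely irreducible $G_{\bR}$-sub-representation on which only the non-compact Hermitian factor $G_{1}$ of $G_{\bR}$ acts nontrivially, so $\calV_{1}$ is an irreducible Hermitian VHS of Calabi--Yau $3$-fold type on the factor $\calD_{1}\subset\calD$, and is classified by Corollary~\ref{thmclassify3}.

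For (ii), write $G=\Res_{E_{0}/\bQ}G'$ with $G'$ geometrically almost simple over $E_{0}$, and let $G_{j}=G'\otimes_{E_{0},\sigma_{j}}\bR$. Then $V_{i}=V'\otimes_{E_{0},\sigma_{i}}\bR$, and the Hodge structure on $V_{i}$ is governed by the cocharacter $\varphi_{i}\colon U(1)\to\bar G_{i}(\bR)$ attached to the real form $G_{i}$ of the fixed complex group $G'_{\bC}$, together with its lift $h_{i}$. The Hermitian hypothesis on $\calV$ forces each $G_{i}$ to be compact or of Hermitian type, and the weight $3$ condition forces the eigenvalues of $h_{i}$ on $V_{i,\bC}$ (twice those of $\varphi_{i}$ by \eqref{diag1}) to be odd integers in $\{\pm 3,\pm 1\}$; in particular, any compact $G_{i}$ is excluded since its cocharacter is trivial and would produce the non-integer bidegree $(3/2,3/2)$. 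Since $\calV_{1}$ is primitive, Corollary~\ref{thmclassify3}(i) forces $G'_{\bC}\in\{\SL_{6},\Sp_{6},\Spin_{12},E_{7}\}$ with $V'_{\bC}$ the corresponding cominuscule representation, and the four cases are dispatched by direct weight arithmetic.

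In the $\SL_{6}$ case with $V'=\Lambda^{3}\bC^{6}$, the Hermitian real forms are $\SU(p,q)$ with $p+q=6$; on the summand $\Lambda^{j}\bC^{p}\otimes\Lambda^{3-j}\bC^{q}$ the eigenvalue of $h$ equals $2j-p$, which is an odd integer if and only if $p$ is odd, leaving only $(p,q)=(3,3)$ (forced to be $\calV_{1}$) together with $(1,5),(5,1)$ (giving type $(A_{5},\alpha_{1};\varpi_{3})$, with Hodge numbers $h^{2,1}=h^{1,2}=10$). In the $\Spin_{12}$ case with $V'$ the half-spin representation $\varpi_{6}$, the non-compact Hermitian real forms are $\SO^{*}(12)$ and $\Spin(2,10)$; for $(D_{6},\alpha_{1})$ one computes $H_{0}=e_{1}^{*}$, so evaluating the weights $\tfrac{1}{2}(\pm 1,\dots,\pm 1)$ of $\varpi_{6}$ gives $\pm\tfrac{1}{2}$ with multiplicity $16$, doubling to $\pm 1$, and $\Spin(2,10)$ is admissible, yielding type $(D_{6},\alpha_{1};\varpi_{6})$. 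For $G'_{\bC}=\Sp_{6}$ and $G'_{\bC}=E_{7}$, the unique non-compact Hermitian real form is already the one giving $\calV_{1}$, so repeating it at another real place would produce a second Calabi--Yau summand and violate $h^{3,0}(V)=1$, while the compact form is excluded as above; hence only configurations (a) and (b) survive.

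For realizability, in case (a) take $G'=\SU(H)$ over $E_{0}$ with $H$ a rank $6$ Hermitian form over a CM quadratic extension $K/E_{0}$ whose signature is $(3,3)$ at $\sigma_{1}$ and $(1,5)$ at every other real place; such an $H$ exists by an elementary diagonalization argument prescribing signs place by place. Case (b) is constructed analogously from a rank $12$ quadratic form over $E_{0}$ realizing the appropriate $\SO^{*}(12)$ and $\SO(2,10)$ signatures. In both constructions the generic endomorphism algebra contains $E_{0}$ by construction and cannot be larger because $G_{1}$ acts absolutely irreducibly on $V_{1}$ with $\End_{G_{1}}(V_{1})=\bR$. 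The hard part is precisely the parity check on the cocharacter eigenvalues that eliminates the $\SU(p,q)$ with $p$ even and all compact factors, together with verifying that no accidental isomorphism enlarges the endomorphism algebra beyond $E_{0}$.
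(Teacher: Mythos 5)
Your arguments for (i) and for the classification half of (ii) are essentially correct and run parallel to the paper's own proof: the paper likewise starts from the eigenspace decomposition \eqref{decompositionr} and the absolute irreducibility of the summands (Proposition \ref{propirred}), observes that for $i>1$ exactly one non-compact simple factor of $G_\bR$ can act nontrivially on $V_i$ (else the Hodge level exceeds one), and then eliminates the $C_3$ and $E_7$ cases --- the paper by a dimension count against Deligne's list of symplectic representations, you by noting that the unique non-compact Hermitian real form would force a second $(3,0)$-class; your parity computation $2j-p$ for $\SU(p,q)$ on $\bigwedge^3\bC^6$ and the evaluation $\pm\tfrac12$ of the half-spin weights against $H_0$ for $(D_6,\alpha_1)$ reproduce the paper's conclusions. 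Two small inaccuracies: in (i), without the primitivity hypothesis you cannot assert that only one simple factor acts nontrivially on $V_1$ (the CY summand may itself be of product type, e.g.\ $\mathfrak{H}\times \IV_n$), though this does not affect the statement since Corollary \ref{thmclassify3} includes the reducible cases; and writing $G=\Res_{E_0/\bQ}G'$ with $V_i=V'\otimes_{E_0,\sigma_i}\bR$ silently identifies the totally real field of definition of $G'$ with the endomorphism field $E_0$, a point the paper handles slightly more carefully.

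The genuine gap is in the realizability claim. In case (a), prescribing signatures $(3,3)$ at $\sigma_1$ and $(1,5)$ elsewhere does produce a group $G_1=\SU(U,\psi)$ over $E_0$, but the representation $\bigwedge^3_E U$ is \emph{a priori} defined only over the CM field $E$, and your construction requires it to descend to $E_0$: otherwise the resulting $\bQ$-representation has generic endomorphism algebra containing $E$ (or a quaternion division algebra over $E_0$), not the totally real field $E_0$, so the hypotheses of the theorem are violated. This descent is precisely the content of the paper's appendix \S\ref{hodgestar}: by Proposition \ref{repdecomposition} and Corollary \ref{ratcor}, an $E_0$-form of $\bigwedge^3_E U$ exists iff $(-1)^3\disc(\psi)$ is a norm from $E$, and when it is not, the algebra generated by $E$ and the Hodge star operator is a quaternion division algebra. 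Your ``sign prescription'' only makes $-\disc(\psi)$ totally positive, which guarantees the norm condition at the real places but not at the finite ones; the paper secures it by the explicit choice $\psi_1(z,w)=z_1\bar w_1+\delta z_2\bar w_2+\delta z_3\bar w_3-z_4\bar w_4-z_5\bar w_5-z_6\bar w_6$, for which $(-1)^3\disc(\psi_1)=\delta^2$ is a square, hence a norm. In case (b) your sketch fails outright: no quadratic form over $E_0$ has $\SO^*(12)$ as a real form ($\SO^*(12)$ is the isometry group of a quaternionic skew-Hermitian form, not of a real quadratic form), and one additionally needs an $E_0$-form of the half-spin representation $\varpi_6$, which depends on the structure of the even Clifford algebra; this is why the paper defers case (b) to \cite{FL2} rather than treating it ``analogously.''
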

\begin{proof}
As discussed (Lemma \ref{lemrealtype}), in the real case we can focus on $V$ as a $G_\bR$-representation. We get a decomposition into absolutely  irreducible representations $V_\bR=V_1\oplus \dots \oplus V_d$ (Proposition \ref{propirred}) and a decomposition into simple geometric factors $G_\bR=G_1\times\dots\times G_q$ (cf. \eqref{decomposeg}) such that all  $G_j$ have the same type (over $\bC$). We make the following two easy remarks:
\begin{itemize}
\item[(1)] If $G_j$ occurs non-trivially in the representation $V_1$, then $G_j$ is of non-compact type and $\rho_j^1\colon G_j\to \GL(W_j^1)$ is of CY type and real, and thus classified in Section \ref{sectclassify}.  
\item[(2)] For all $i>1$, precisely one non-compact $G_j$ occurs non-trivially in $V_i$, since otherwise the Hodge level of $V_i$ will be larger than $1$  contradicting the weight $3$ Calabi-Yau assumption on $V$. The associated representations $\rho_j^i\colon G_j\to \GL(W_j^i)$ are of symplectic type (for $i>1$) as classified by Deligne \cite{dshimura}.
\end{itemize}
From (1) and the primitivity assumption, we get that only one group occurs nontrivially in the representation $V_1$ (i.e. only one $\rho_j^1$ is non-trivial). Since over $\bC$ the representations $V_i$ are isomorphic, it follows then that only one group $G_{j_i}$ occurs non-trivially in each $V_i$ (i.e. $\rho_{j_i}^i$ is nontrivial). From this and the two remarks above, it follows that each $G_j$ is of non-compact type. Furthermore, from (2) and the classification of symplectic representations, we conclude that $G_j$ has to be of classical type (e.g. \cite[\S10.8]{milne}); this excludes the $E_7$ case.  

Returning to (1),  the assumption in (ii)  gives the following three possibilities for the CY piece $V_1$ (with notations as in \ref{thmclassify3}):
 $(A_5,\alpha_3; \varpi_3)$, $(D_6,\alpha_6; \varpi_6)$,  $(C_3,\alpha_3; \varpi_3)$. The symplectic representations associated to these groups are: $(A_5,\alpha_3; \varpi_1)$  and its dual $(A_5,\alpha_3; \varpi_5)$, $(A_5,\alpha_1;\varpi_i)$, $(D_6,\alpha_6; \varpi_1)$, $(D_6,\alpha_1;\varpi_6)$, $(D_6,\alpha_1;\varpi_5)$, $(C_3,\alpha_3;\varpi_1)$ (e.g. \cite[\S10]{milne}).
For dimension reasons ($\dim V_i=\dim V_1$), the only two possibilities are 
\begin{itemize}
\item[a)] $V_1$ of type $(A_5,\alpha_3;\varpi_3)$, $V_i$ of type $(A_5, \alpha_1;\varpi_3)$ for $i>1$;
\item[b)] $V_1$ of type $(D_6,\alpha_6;\varpi_6)$, $V_i$ of type $(D_6,\alpha_1;\varpi_6)$
for $i>1$.
\end{itemize}

To conclude the proof, we only need to show that these two cases are realized by some Hodge representation $\rho\colon  G\to \GL(V)$ (over $\bQ$) as in the statement of the theorem. Both cases can be realized by starting with an appropriate  representation $\rho_1\colon G_1\to \GL(V_1)$ defined over a given totally real field $E_0$ and taking $G=\Res_{E_0/\bQ} G_1$ together with the induced representation on $V=\Res_{E_0/\bQ} V_1$. Then $G_\R = \prod_{i=1}^dG_{i, \R}$,   where $d=[E_0:\bQ]$, the distinct embeddings of $E_0$ in $\R$ are denoted by $\sigma_1, \dots, \sigma_d$,   the group $G_{i,\R}$ is the group over $\R$ induced by the embedding $\sigma_i$, and similarly $V\otimes_{\Q}\R = \bigoplus _{i=1}^d V_{i, \R}$.
The associated $\bQ$-VHS $\calV$ will have real multiplication by $E_0$ and will split over $\bR$  into a CY piece $\calV_1$ and a product of Tate twists of weight $1$ factors, say  $\calV_2, \dots, \calV_d$. 
The Mumford-Tate domain is $\calD_1\times \calD_2\times \cdots \times \calD_d$, a product of  different Hermitian symmetric domains, with $\calD_i$ parameterizing the Hermitian VHS $\calV_i$.
We will discuss here only the $A_5$ case. The  case of $D_6$ involves the detailed study of the interplay between the Clifford algebra and a suitable Hermitian form and it is discussed in \cite{FL2}.

In the $A_5$ case, we want $G_{1, \R}=\SU(3,3)$ and $G_{i, \R}=\SU(1,5)$ for $i> 1$. Over $\bC$, $G_{i,\bC}\cong \SL(6,\bC)$ for all $i$, and  $V_{i,\bC}\cong \bigwedge_{\bC}^3 U$ (corresponding to weight $\varpi_3$), where $U$ is the standard representation of $\SL(6,\bC)$. (Here we write $\bigwedge_{\bC}^3$ to emphasize that we take the wedge product of the \textbf{complex} vector space $U$.) A real representation  of $G_{1, \R}$, resp.\  $G_{i, \R}$ for $i>1$, is obtained by endowing $U$ with a Hermitian form  of signature $(3,3)$ resp.\  $(1,5)$. From  \cite[Thm. IV.E. 4]{mtbook}, it follows that the representation $\bigwedge_{\bC}^3 U$ for $G_{i, \R}$ is of real type, i.e. there exist real representations $V_{i, \R}$ of $G_{i, \R}$ with $\bigwedge_{\bC}^3 U\cong V_{i,\R}\otimes_\bR \bC$. 
Thus, we need to construct $G_1$ and the representation 
$$\rho_1\colon G_1\to \GL(V_1)$$
over the totally real field $E_0$, so that the induced representations $\rho_i\colon G_{i,\R} \to V_{i,\R}$ are  as claimed. Then  $\rho=\Res_{E_0/\bQ}(\rho_1)$ will be an irreducible representation over $\bQ$ with the desired properties. 

We fix an embedding $E_0\subset \bR$, and chose a quadratic imaginary extension $E_0\subset E$. By the approximation theorem, there exists a $\delta \in E_0$ such that $\sigma_1(\delta) > 0$ and $\sigma_i(\delta) < 0$ for $i> 1$. For example, in case $E_0 = \Q[\sqrt{d}]$ for a positive square-free integer $d$, we can simply take $\delta = \sqrt{d}$.
 Let $U$ be an $E$-vector space of dimension $6$, together with a Hermitian form $\psi_1$, where $\psi_1$ is given in a suitable basis by
$$\psi_1(z,w)=z_1\bar w_1+\delta\cdot z_2\bar w_2+\delta\cdot z_3\bar w_3-z_4\bar w_4-z_5\bar w_5-z_6\bar w_6.$$
Let   $\psi_i$ be the   form obtained by replacing  $\delta$ by $\sigma_i(\delta)$ above. 
Define $G_i=\SU(U,\psi_i)$,  and let $U_i=(U,\psi_i)$ (i.e.\ the standard representations of $G_i$). Clearly, $G_{1,\bR}\cong \SU(3,3)$ and $G_{i,\bR}\cong \SU(1,5)$ for $i > 1$. (Since the embeddings $\sigma_i \colon E_0\to \bR$ are given,  the groups $G_{i,\bR}$ are well defined.)    The representation  $\bigwedge^3_E U_1$ of $G_1$ is \emph{a priori} defined over $E$, where again we write $\bigwedge_{E}^3$ to emphasize that we take the wedge product of    $U_1$ viewed as  $E$-vector spaces.  By Corollary~\ref{ratcor} below, since $(-1)^3 \disc(\psi_1)=\delta^2$ is a square in $E_0$, and hence of the form $\Nm_{E/E_0}(c)$, where $\Nm_{E/E_0}(c)\colon E^* \to E_0^*$ is the norm,   $\bigwedge^3_E U_1$ is in fact defined over $E_0$: $\bigwedge^3_E U_1\cong V_1\otimes_{E_0} E$ for some $G_1$-representation $V_1$ defined over $E_0$. 
Clearly,  the different embeddings $\sigma_i$ replace the $\Cee$-vector space  $U_{1,\bR}$ by $U_{i,\bR}$ and the real vector space  $V_{1,\bR}$ by $V_{i,\bR}$. Hence the factors $G_{i,\bR}$ and the representations $\rho_i$  are as claimed, concluding the proof of Theorem~\ref{classify3q}.
\end{proof}

\begin{remark} By the classification of Hermitian forms due to Landherr (see for example \cite{shimurah}), if $\psi'$ is an $(E,E_0)$-Hermitian form of rank $6$ such that the signature of $\sigma_i(\psi')$ is $(3,3)$ for $i=1$ and $(1,5)$ for $i> 1$ and such that $(-1)^3 \disc(\psi')$ is a norm, then $\psi'$ is equivalent to the form $\psi_1$ defined above. Thus in some sense the above construction is the most general one possible starting with a real quadratic field $E_0$ and a Hermitian form defined over an imaginary quadratic extension of $E_0$.
\end{remark}

\subsection{Appendix to Section \ref{cmsect}}\label{hodgestar}
For $p+q=2n$, the representation $\bigwedge^n_\Cee U$ of the standard representation  $U$ of $\SU(p,q)$  has an extra endomorphism given by a variant of the Hodge star  operator.  Over $\bR$, this implies the fact that the representation of highest weight $\varpi_n$ of $\SU(p,q)$ is of real type if $p-q\equiv 0 \mod 4$ and of quaternionic type otherwise (compare \cite[Thm. IV.E.4]{mtbook}). For the proof of Theorem \ref{classify3q}, we need an analogous statement over number fields. For lack of a suitable reference, we sketch the details of the construction.  (See also \cite{lombardo} for a partial discussion in the number field case  and \cite[\S3.9]{gs} for the real case.)

Fix a subfield $E_0$ of $\bR$ (in practice,  either $E_0= \bR$ or $E_0$ is a totally real number field with a fixed embedding into $\R$), and let $E$ be an imaginary quadratic extension of $E_0$, with $\bar\ $ the complex conjugation ($E_0=\{z\in E\mid \bar z=z\}$). Consider an $E$-vector space $U$ of dimension $2n$ together a non-degenerate Hermitian form $\psi$. Let $G=\SU(U,\psi)$. Let $W$ be the $E$-vector space $\bigwedge_E^n U$. On $W$ we have a  pairing 
$$\wedge\colon  W\times W\to \bigwedge_E^{2n} U\cong E$$
given by the wedge product on $n$-forms. Fixing a generator $\omega \in \bigwedge_E^{2n} U$ (i.e.\ a ``volume form,''  or equivalently an identification $\bigwedge_E^{2n} U\cong E$) gives an $E$-linear identification:
$$\rho\colon W\to W^\vee.$$
Additionally, on $W$ we have an induced Hermitian form $\tilde \psi=\wedge^n\psi$, which can be viewed as an $E$-anti-linear identification 
$$\tau\colon W\to W^\vee.$$
One can check that $\tilde \psi$ is characterized by the condition that
$$\tilde \psi(v_1\wedge \cdots \wedge v_n,w_1\wedge \cdots \wedge w_n) = \det(\psi(v_i, w_j)).$$ 
The $E$-anti-linear (and hence $E_0$-linear) {\it Hodge star operator} is defined by
\begin{equation}\label{eqhodgestar}
\star= \rho^{-1}\circ \tau\colon W\to W.
\end{equation}
Equivalently, for all  $w_1,w_2\in W=\bigwedge_E^n U$, 
$$\tilde \psi(w_1,w_2)\cdot \omega=w_1\wedge \star w_2.$$
If we replace $\omega$ by $c\omega$ for some $c\in E$, we replace $\star$ by $c\star$ and hence we replace  $\star\star\colon W\to W$  by $\Nm_{E/E_0}(c )\star\star$.
For a non-degenerate Hermitian form $\psi$, one defines $\disc(\psi)\in E_0^*$ to be $\det M$, where $M$ is the Hermitian matrix (with entries in $E$) representing $\psi$ with respect to a  choice of basis on $U$. Since a change of basis transforms $M$ into $A\cdot M\cdot \bar{A}^t$,  the discriminant is intrinsically defined only as a class in $E_0^*/\Nm_{E/E_0}(E^*)$. For example,  for $E_0=\bR$ and signature $(p,q)$, the discriminant is $(-1)^q\in \bR^*/(\bR^*)^2\cong\{-1,1\}$. However, once the volume form $\omega$ has been fixed, by considering only bases $e_1, \dots, e_{2n}$ with  $e_1\wedge  \cdots \wedge e_{2n} = \omega$, we get a well defined $\disc(\psi)\in E_0$.   

\begin{lemma}\label{lemhodgestar}
The Hodge star operator $\star\colon W\to W$ commutes with the natural $G$-action on $W$ and satisfies:
$$\star \star =(-1)^n \disc(\psi)\cdot \id_W.$$
\end{lemma}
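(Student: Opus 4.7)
My plan splits the lemma into its two assertions, $G$-equivariance of $\star$ and the identity $\star\star = (-1)^n \disc(\psi)\cdot\id_W$; the first will be essentially formal from the construction, while the second reduces to a direct computation in a suitably normalized orthogonal basis.

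For $G$-equivariance, I intend to show that the two building blocks $\rho$ and $\tau$ of $\star = \rho^{-1}\circ\tau$ are each $G$-equivariant, so that $\star$ is as well. An element $g \in G = \SU(U,\psi)$ acts $E$-linearly, preserves $\psi$ (hence $\tilde\psi$), and has determinant $1$, so the action on $\bigwedge_E^{2n} U$ is trivial and $g\cdot\omega = \omega$. With the usual contragredient action on $W^\vee$, the identity $v\wedge gw = g(g^{-1}v\wedge w)$ together with $g\omega = \omega$ gives $\rho(gw) = g\cdot\rho(w)$, and $\tilde\psi(v, gw) = \tilde\psi(g^{-1}v, w)$ gives the corresponding statement for $\tau$. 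The $E$-anti-linearity of $\tau$ creates no difficulty here since $G$ acts $E$-linearly on both sides.

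My plan for $\star\star$ is to first observe that $\star\star$ is $E$-linear (as a composition of two $E$-anti-linear maps), so it suffices to evaluate it on a basis. Diagonalize $\psi$ to obtain an orthogonal basis $f_1,\dots,f_{2n}$ of $U$ with $\psi(f_i,f_i) = a_i \in E_0^*$, then rescale $f_1$ so that $f_1\wedge\cdots\wedge f_{2n} = \omega$; this normalization is precisely what is needed for the product $\prod_{i=1}^{2n} a_i$ to equal $\disc(\psi)$ as a genuine element of $E_0$ rather than merely a class modulo $\Nm_{E/E_0}(E^*)$. For an $n$-subset $I\subseteq\{1,\dots,2n\}$ let $e_I = f_{i_1}\wedge\cdots\wedge f_{i_n}$ with $i_1<\cdots<i_n$, and define $\epsilon(I)\in\{\pm1\}$ by $e_I\wedge e_{I^c} = \epsilon(I)\omega$. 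The expansion $\tilde\psi(e_I,e_J) = \det(\psi(f_{i_k}, f_{j_\ell}))$ makes $\{e_I\}$ a $\tilde\psi$-orthogonal basis with $\tilde\psi(e_I,e_I) = \prod_{i\in I} a_i \in E_0$, and the defining equation $\tilde\psi(w_1,w_2)\omega = w_1\wedge\star w_2$ yields
\[
\star e_I = \epsilon(I)\Bigl(\prod_{i\in I} a_i\Bigr) e_{I^c}.
\]

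Iterating this formula, and using that the $a_i \in E_0$ are fixed by complex conjugation (so no factors of $\bar{\ }$ enter when applying the anti-linear $\star$ a second time), gives
\[
\star\star\, e_I \;=\; \epsilon(I)\epsilon(I^c)\Bigl(\prod_{i=1}^{2n} a_i\Bigr) e_I.
\]
The main piece of bookkeeping is the sign: the block swap $(I, I^c) \mapsto (I^c, I)$ is an exchange of two blocks of size $n$, with sign $(-1)^{n^2} = (-1)^n$, so $\epsilon(I)\epsilon(I^c) = (-1)^n$; combined with $\prod_{i=1}^{2n} a_i = \disc(\psi)$ from the normalization, this completes the proof. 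The only genuinely delicate point in the whole argument is that rescaling step, which is needed precisely because $\disc(\psi)$ is otherwise defined only modulo norms from $E^*$, and without the normalization one would only get the identity up to an element of $\Nm_{E/E_0}(E^*)$.
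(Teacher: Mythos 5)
Your proof is correct and takes essentially the same route as the paper's: $G$-equivariance because $G$ preserves the volume form, the wedge pairing, and $\tilde\psi$, and the identity $\star\star=(-1)^n\disc(\psi)\cdot\id_W$ via the same computation $\star e_I=\epsilon(I)\,a_I\,e_{I^c}$ in a $\psi$-diagonalizing basis whose wedge equals $\omega$, with the identical sign count $(-1)^{n^2}=(-1)^n$. Your additional remarks (the $E$-linearity of $\star\star$, that the $a_i\in E_0$ are conjugation-fixed, and the normalization making $\disc(\psi)$ a well-defined element of $E_0$ rather than a class modulo norms) merely make explicit points the paper leaves implicit.
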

\begin{proof} The action of $G=\SU(U,\psi)$  preserves the volume form $\omega$, the pairing $W\times W\to \bigwedge_E^{2n} U$, and the Hermitian form $\tilde \psi=\wedge^n \psi$ and hence commutes with $\star$. 

We can check the formula for $\star \star$ by choosing a basis $\{e_1,\dots,e_{2n}\}$ for $U$ that diagonalizes the Hermitian form $\psi$, so that the matrix $M$ of $\psi$ with respect to this basis is $M=\mathrm {diag}(a_1,\dots,a_{2n})$ (with $a_i\in E_0$). Choose  $\omega=e_1\wedge\dots\wedge e_{2n}$. A basis for $W$ is given by $\{e_{I}:=e_{i_1}\wedge \dots \wedge e_{i_n}: |I|=n\}$. Clearly
$\star(e_{I})=\epsilon_{I,I'} \cdot a_I\cdot e_{I'}$,
where $I'=\{1,\dots, 2n\}- I$ is the complementary index, $a_I=a_{i_1}\dots a_{i_n}$, and $\epsilon_{I,I'}\in \{\pm 1\}$ is the signature of the permutation $(I,I')$. Thus, 
\begin{eqnarray*}
\star\star(e_{I})&=&(\epsilon_{I,I'}\cdot \epsilon_{I',I}) \cdot (a_I\cdot a_{I'})\cdot e_{I},\\
&=& (-1)^{n\cdot n} \disc(\psi) \cdot e_I,
\end{eqnarray*}
as claimed.
\end{proof}

The standard representation $U$ is only defined over $E$. Thus, the representation $W=\bigwedge^n_EU$ is also \emph{a priori} only defined over $E$. We can view it a representation over $E_0$, by considering $\Res_{E/E_0} W$,   i.e.\ by considering $W$ as a $E_0$-vector space. Note that 
$$\Res_{E/E_0} W \otimes _{E_0}E = W\otimes _{E_0}E \cong W \oplus \bar{W} \cong W \oplus W\spcheck \cong W\oplus W$$
as $E[G]$-modules. Clearly, $E\subseteq \End_{E_0[G]}(\Res_{E/E_0} W)$  (induced by the $E$-vector space structure on $W$). The above discussion says that $\star$ gives an extra endomorphism for the $G$-representation $\Res_{E/E_0} W$. Let $\mathcal{A}$ be the $E_0$-subalgebra of $\End_{E_0[G]} (\Res_{E/E_0} W)$ generated by $E$ and $*$.  Note that, for all $\alpha \in E$, $ \star \alpha= \bar{\alpha}\star$, so that $\mathcal{A}$ is a noncommutative $E_0$-algebra which is also an $E$-vector space. Clearly $\dim_E \mathcal{A}=2$ and hence $\dim_{E_0} \mathcal{A}=4$. Changing the volume form $\omega$ simply replaces $\star$ by $c\star$ for some $c\in E$, and hence $\mathcal{A}$ is canonically defined. In case $E = E_0(\sqrt{-e})$, an $E_0$-basis for $\mathcal{A}$ is given by the operators $\Id$, $\mathbf{i} = \sqrt{-e}\Id$, $\mathbf{j} = \star$, and $\mathbf{k} = \mathbf{i}\mathbf{j}$, with $\mathbf{i}^2 =-e\Id$, $\mathbf{j}^2=(-1)^n\disc(\psi)\Id$, and $\mathbf{i}\mathbf{j} = -\mathbf{j}\mathbf{i}$, hence $\mathbf{k}^2 = -\mathbf{i}^2\mathbf{j}^2$.

\begin{proposition}\label{repdecomposition} Let $W=\bigwedge_E^n U$, $G=\SU(U,\psi)$,  and $\mathcal{A}$ be as above.
\begin{enumerate}
\item[(i)] $\mathcal{A}= \End_{E_0[G]} (\Res_{E/E_0} W)$.
\item[(ii)] The algebra $\mathcal{A}$ is semisimple.  
\item[(iii)]  $\mathcal{A}$ is  a division algebra $\iff$ the representation $W$ cannot be defined over $E_0$, i.e.\ $\iff$ $\Res_{E/E_0} W$ is an irreducible $E_0[G]$-module.
\item[(iv)] $\mathcal{A}\cong  \mathbb{M}_2(E_0)$ is  a matrix algebra $\iff$ the representation $W$ can  be defined over $E_0$, i.e.\ $\iff$ $\Res_{E/E_0} W$ is a reducible $E_0[G]$-module, or equivalently if there exists a $G$-representation on an $E_0$-vector space $W_0$ such that $W\cong W_0 \otimes_{E_0}E$ as $E[G]$-modules.
\item[(v)] $\mathcal{A}\cong  \mathbb{M}_2(E_0)$ is  a matrix algebra $\iff$ $(-1)^n\disc(\psi)$ is a norm in $E_0$, i.e\ there exists a $c\in E$ such that $(-1)^n\disc(\psi) = \Nm_{E/E_0}(c)$.
\end{enumerate}
\end{proposition}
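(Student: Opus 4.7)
The plan is to first identify $\mathcal A$ as the quaternion algebra
$\left(\tfrac{-e,\,(-1)^n\disc(\psi)}{E_0}\right)$: Lemma~\ref{lemhodgestar} together with the $E$-conjugate-linearity of $\star$ give the defining relations $\mathbf i^2=-e$, $\mathbf j^2=(-1)^n\disc(\psi)$, and $\mathbf i\mathbf j=-\mathbf j\mathbf i$ on the basis $\{1,\mathbf i,\mathbf j,\mathbf k\}$. Since every quaternion algebra is a central simple $E_0$-algebra of dimension $4$, statement (ii) will be immediate, and the Wedderburn dichotomy for rank-$4$ central simple algebras reduces the division-algebra vs.\ matrix-algebra alternative in (iii) and (iv) to a single question: when is $\mathcal A$ split? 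Throughout, the main technical care is in keeping straight the interplay between the $E$-linear, $E_0$-linear, and $E$-conjugate-linear structures, but this is essentially bookkeeping.

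For (i), the inclusion $\mathcal A\subseteq \End_{E_0[G]}(\Res_{E/E_0}W)$ is by construction, so I would compare $E_0$-dimensions. By flat base change,
$$\End_{E_0[G]}(\Res_{E/E_0}W)\otimes_{E_0}E\;\cong\; \End_{E[G]}(W\oplus \bar W).$$
The Hermitian form $\psi$ yields an $E$-linear, $G$-equivariant isomorphism $\bar U\cong U^\vee$ (via $u\mapsto \psi(u,\,\cdot\,)$), hence $\bar W\cong W^\vee$; the wedge pairing $\rho$ gives $W\cong W^\vee$, so $W\cong \bar W$ as $E[G]$-modules. Since $W$ becomes the fundamental representation $\bigwedge^n$ of the standard representation of $\SL_{2n}$ after further base change to $\bar E$, it is absolutely irreducible, and Schur's lemma identifies $\End_{E[G]}(W\oplus\bar W)$ with $\mathbb M_2(E)$. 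Counting $E$-dimensions then gives $\dim_{E_0}\End_{E_0[G]}(\Res_{E/E_0}W)=4=\dim_{E_0}\mathcal A$, so (i) follows.

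To link $\mathcal A$ to the reducibility of $\Res_{E/E_0}W$, note first that if $\Res_{E/E_0}W$ is $E_0[G]$-irreducible then Schur forces $\mathcal A$ to be a division algebra, while any non-trivial $E_0[G]$-summand supplies a non-zero, non-invertible idempotent in $\mathcal A$, forcing $\mathcal A\cong \mathbb M_2(E_0)$. Next, $E_0[G]$-reducibility is equivalent to $W$ having an $E_0$-form: if $W\cong W_0\otimes_{E_0}E$ then $\Res_{E/E_0}W\cong W_0\oplus W_0$, and conversely if $p_1, p_2$ are primitive orthogonal idempotents in $\mathcal A\cong \mathbb M_2(E_0)$ then $W_0:=p_1\Res_{E/E_0}W$ is $E_0[G]$-stable and $W_0\otimes_{E_0}E\cong W$ by a dimension count inside $W\oplus \bar W$ (using $W\cong \bar W$ to identify the two summands). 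This establishes (iii) and (iv) together.

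Finally, (v) will follow from the standard splitting criterion for quaternion algebras: $\left(\tfrac{a,b}{E_0}\right)$ is split iff $b\in \Nm_{E_0(\sqrt a)/E_0}(E_0(\sqrt a)^*)$, which for $\mathcal A$ reads $(-1)^n\disc(\psi)\in \Nm_{E/E_0}(E^*)$. A direct descent argument also works and is in keeping with the paper's point of view: $W$ admits an $E_0$-form iff there exists a $G$-equivariant, $E$-conjugate-linear involution $\sigma\colon W\to W$ with $\sigma^2=\id$. The operator $\star$ is the natural candidate, failing only because $\star^2=(-1)^n\disc(\psi)\,\id$; and $\alpha\star$ for $\alpha\in E^*$ satisfies $(\alpha\star)^2=\Nm_{E/E_0}(\alpha)\cdot(-1)^n\disc(\psi)\,\id$, which equals $\id$ for some $\alpha$ precisely when $(-1)^n\disc(\psi)$ lies in the image of $\Nm_{E/E_0}$. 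The hardest step, if any, is verifying the chain of $G$-equivariant identifications $\bar W\cong W^\vee\cong W$ over $E$ used in (i); once that is in hand, the whole proposition reduces to the standard Galois-descent and quaternion-algebra package.
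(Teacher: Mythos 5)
Your proposal is correct. For parts (i)--(iv) it coincides with the paper's own proof: the same inclusion-plus-dimension count through $\End_{E_0[G]}(\Res_{E/E_0} W)\otimes_{E_0}E\cong \End_{E[G]}(W\oplus \bar W)\cong \mathbb{M}_2(E)$, resting on the identifications $\bar W\cong W\spcheck\cong W$ --- and the chain you single out as the ``hardest step'' is already built into the paper's definition of $\star=\rho^{-1}\circ\tau$, with $\tau$ induced by $\tilde\psi=\wedge^n\psi$ and $\rho$ by the wedge pairing, so nothing new is needed there --- together with the same dictionary between (ir)reducibility of $\Res_{E/E_0}W$ and $\mathcal{A}$ being division or matrix. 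The genuine divergence is in (v): the paper computes $(\alpha+\beta\star)(\bar\alpha-\beta\star)=\alpha\bar\alpha-(-1)^n\disc(\psi)\beta\bar\beta$ and concludes by hand (every nonzero element is invertible if $(-1)^n\disc(\psi)$ is not a norm, and one manufactures a zero divisor if it is), whereas you recognize $\mathcal{A}$ as the quaternion algebra $\left(\tfrac{-e,\,(-1)^n\disc(\psi)}{E_0}\right)$ --- as the paper's remark on the basis $\Id,\mathbf{i},\mathbf{j},\mathbf{k}$ already suggests --- and quote the classical splitting criterion (split iff the second slot is a norm from $E_0(\sqrt{-e})=E$). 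These are the same mathematics at different levels of abstraction, since the paper's identity is precisely the textbook proof of that criterion for this algebra: your route is shorter and places $\mathcal{A}$ inside standard central-simple-algebra theory, while the paper's stays elementary and self-contained. Your supplementary descent argument, seeking a $G$-equivariant conjugate-linear involution $\alpha\star$ with $(\alpha\star)^2=\Nm_{E/E_0}(\alpha)\,(-1)^n\disc(\psi)\,\id$, is a genuinely different and illuminating proof of the equivalence of (iv) and (v), tying the norm condition directly to the existence of an $E_0$-form in the way Corollary~\ref{ratcor} is actually applied; for its ``only if'' direction you should invoke (i) to see that any $G$-equivariant conjugate-linear endomorphism lies in $E\star\subset\mathcal{A}$ and hence is of the form $\beta\star$.
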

\begin{proof} Clearly, there is an inclusion $\mathcal{A}\subseteq  \End_{E_0[G]} (\Res_{E/E_0} W)$ and hence 
$$\mathcal{A}\otimes_{E_0}E\subseteq  \End_{E_0[G]} (\Res_{E/E_0} W)\otimes_{E_0}E \subseteq \End_{E[G]} (\Res_{E/E_0} W\otimes_{E_0}E).$$
Since $\Res_{E/E_0} W\otimes_{E_0}E \cong W\oplus W$ is a direct sum of two irreducible $E[G]$-modules, $\End_{E[G]} (\Res_{E/E_0} W\otimes_{E_0}E) \cong \mathbb{M}_2(E)$ is a matrix algebra of dimension $4$ over $E$. Since $\dim_{E_0}\mathcal{A} =4$, all of the above inclusions are equalities, proving (i), and (ii) follows since $\mathcal{A}\otimes_{E_0}E \cong \mathbb{M}_2(E)$ is semisimple. Clearly, if $\Res_{E/E_0} W$ is an irreducible $E_0[G]$-module, then $\mathcal{A}$ is a division algebra, and if $\Res_{E/E_0} W$ is a reducible $E_0[G]$-module, then it is necessarily of the form $W_0 \oplus W_0$, where $W_0\otimes_{E_0}E \cong W$ as $E[G]$-modules. This implies (iii) and (iv). 
Finally, to see (v), every element $a$ of $\mathcal{A}$ is of the form  $\alpha + \beta\star$,  with $\alpha, \beta\in E$, and $a=0$ $\iff$ $\alpha =\beta = 0$. Now
$$(\alpha + \beta\star)(\bar{\alpha} -  \beta\star) = \alpha\bar{\alpha} - (-1)^n\disc(\psi)\beta\bar{\beta}.$$
This expression is $0$ $\iff$ $\beta \neq 0$ and  $(-1)^n\disc(\psi) = \Nm_{E/E_0}(\alpha/\beta)$ or  $\alpha =\beta =0$. Thus, if $(-1)^n\disc(\psi)$ is not a norm, then $\alpha + \beta\star$ is invertible as long as  it is nonzero.  Conversely, if $(-1)^n\disc(\psi)$ is a norm, then we can construct a zero divisor in $\mathcal{A}$, so that $\mathcal{A}\cong  \mathbb{M}_2(E_0)$ is  a matrix algebra.
\end{proof} 

\begin{corollary}\label{ratcor} The representation $W$ can  be defined over $E_0$ $\iff$ $(-1)^n\disc(\psi)$ is a norm in $E_0$. \qed
\end{corollary}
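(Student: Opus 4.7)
The plan is to observe that the corollary is an immediate two-step consequence of Proposition~\ref{repdecomposition}, specifically by chaining the equivalences in parts (iv) and (v). There is essentially no new content to add; the work has already been done in setting up the algebra $\mathcal{A} = \End_{E_0[G]}(\Res_{E/E_0} W)$ generated by $E$ and $\star$ and in analyzing when $\mathcal{A}$ is a matrix algebra versus a division algebra.

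Concretely, I would first invoke part (iv) of Proposition~\ref{repdecomposition}, which asserts that $W$ descends to an $E_0$-representation of $G$ (i.e.\ there is a $G$-module $W_0$ over $E_0$ with $W \cong W_0\otimes_{E_0} E$) if and only if $\mathcal{A}$ is isomorphic to the matrix algebra $\mathbb{M}_2(E_0)$. I would then invoke part (v) of the same proposition, which identifies the condition $\mathcal{A}\cong \mathbb{M}_2(E_0)$ with the arithmetic condition that $(-1)^n \disc(\psi)$ lies in the image of the norm map $\Nm_{E/E_0}\colon E^*\to E_0^*$. Composing these two equivalences yields exactly the statement of the corollary.

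The only substantive input is already packaged in Proposition~\ref{repdecomposition}: the dichotomy between $\mathcal{A}$ being a division algebra (in which case $\Res_{E/E_0}W$ is $E_0[G]$-irreducible and $W$ does not descend) and $\mathcal{A}$ being the split quaternion algebra $\mathbb{M}_2(E_0)$ (in which case $\Res_{E/E_0}W\cong W_0\oplus W_0$ for a descended module $W_0$), together with the explicit computation that the element $\alpha + \beta\star$ fails to be invertible precisely when $(-1)^n \disc(\psi) = \Nm_{E/E_0}(\alpha/\beta)$. Hence no obstacle remains: the corollary is a direct corollary in the strict sense, and its proof is a single line citing parts (iv) and (v) of the proposition.
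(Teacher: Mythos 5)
Your proposal is correct and matches the paper exactly: the corollary is stated with a \qed precisely because it follows by chaining Proposition~\ref{repdecomposition}(iv) (descent to $E_0$ is equivalent to $\mathcal{A}\cong \mathbb{M}_2(E_0)$) with Proposition~\ref{repdecomposition}(v) ($\mathcal{A}\cong \mathbb{M}_2(E_0)$ is equivalent to $(-1)^n\disc(\psi)$ being a norm from $E$). Nothing further is needed.
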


\section{Explicit realization of horizontal subvarieties: real case}\label{sectequations}
For the remainder of this paper, we shall only be concerned with the weight three CY case. In this and the next section, our goal will be to give a concrete description of a class of   horizontal subvarieties $Y$ of $\check\bD$. In case $Y=\hat{Z}$ is globally a closed subvariety of $\check\bD$,  $Z =\hat{Z}\cap \bD$ will then be semi-algebraic in $\bD$. 

A local description of horizontal subvarieties $Y\subset \check\bD$ is well known (Bryant--Griffiths \cite{bg}) under a mild non-degeneracy condition satisfied in  case $Y=Z$ is the image of the period map for a family of Calabi--Yau threefolds. Namely, if $X$ is a Calabi--Yau threefold then the first order deformations of $X$ are unobstructed, by the Tian--Todorov theorem, and the Gauss--Manin connection induces an isomorphism $H^1(X; T_X) \to \Hom (H^0(X; \Omega^3_X), H^1(X; \Omega^2_X))$. Thus, roughly speaking, the period map is a local embedding of the  moduli space, and the image subvariety $Z\subset \bD$ satisfies: 
\begin{convention}\label{nondegcond} For every choice of local  coordinates $z_1, \dots, z_h$ on $Z$, if $\omega(z)$ is a local section of the Hodge bundle $F^3$, then  the derivatives $\partial \omega/\partial z_i$ span $F^2/F^3$. 
\end{convention}
We will assume in this section that the horizontal subvariety $Y\subset \check \bD$ satisfies this property and that there is a global choice of coordinates adapted to this situation, which we shall describe below. This global description is well-adapted to questions involving real or rational structures (for example, the existence of maximally unipotent monodromy) and is an analogue of  the Cayley transform, which is an unbounded realization of a Hermitian symmetric space.  In the next section, we shall describe the analogue of the Harish-Chandra embedding of a Hermitian symmetric space as a bounded symmetric domain.

\subsection{Local description of the horizontal subvarieties}\label{sectlocal}
 
 \begin{notation} The standard symplectic basis on a lattice $\Lambda$ of rank $2h+2$ will be written as $e_0, e_1, \dots, e_h, f_1, \dots , f_h, f_0$ with $\langle e_i, e_j\rangle = \langle f_i, f_j\rangle =0$ for all $i, j$ and $\langle e_i, f_j\rangle =\delta_{ij}$. In particular, the symplectic lattice $(\Zee^{2h+2}, \langle  \cdot, \cdot \rangle)$ comes with a fixed   filtration $W_\bullet$ defined by
\begin{gather*}
W_0 =W_1 = \Zee \cdot e_0 \subseteq W_2 = W_3 = \operatorname{span}\{e_0,e_1,  \dots, e_h\}\\
\subseteq W_4 = W_5 = \operatorname{span}\{e_0, e_1, \dots, e_h, f_1, \dots, f_h\}\subseteq W_6 = \Zee^{2h+2}.
\end{gather*}
Hence we can speak of a large radius limit, or equivalently of integral symplectic matrices $T$ preserving the filtration (which will in general be maximally unipotent). We will also relax the condition that the $e_i, f_i$ be integral and sometimes just assume that they are a fixed symplectic basis of a $k$-vector space $V$ with a symplectic form defined over $k$, where $k$ is a subfield of $\R$ or $\Cee$.
\end{notation}

Let $\mathbf{D}$ be the classifying space of Hodge structures of weight $3$ on $\Lambda_\Cee =\Lambda \otimes _\Zee\Cee$ satisfying $h^{3,0} = h^{0,3} = 1$, and hence $h^{2,1} = h^{1,2} = h$, and let 
$\check{\mathbf{D}}$ denote the compact dual of $\mathbf{D}$.
We begin with a local description of the horizontal subvarieties of $\check{\mathbf{D}}$, essentially going back to Bryant--Griffiths \cite{bg} (see also \cite{friedmancy}, \cite{voisinmirror}). More precisely, let $(Y,0)$ be the germ of an analytic subspace of $\check{\mathbf{D}}$ which satisfies Griffiths' transversality condition, as well as the non-degeneracy condition \ref{nondegcond}. 
In \S\ref{sectlocal} and \S\ref{globalconst}, the Hodge--Riemann inequalities will be irrelevant. We discuss the Hodge--Riemann inequalities in \S\ref{secthr}. To emphasize the distinction, we will write $Z$ for a locally closed horizontal subvariety (or the germ of such) of $\bD$, and $Y$ for a horizontal subvariety of $\check\bD$ (possibly $Y=\hat Z$, the Zariski closure of $Z$ in $\check \bD$).

First, following \cite{bg}, we note that in the Calabi-Yau threefold case the transversality conditions are closely related to the geometric notion of Legendrian manifold. 
\begin{definition} Let $V$ be a complex vector space endowed with a symplectic form $\langle \cdot, \cdot \rangle$.
A \textsl{Legendrian immersion} from a manifold $X$ to $\Pee V$ is an immersion $\iota \colon X\to \Pee V$ such that, for every $x\in X$, the subspace of $V$ corresponding to  $\iota_*(T_xX)$ is a Lagrangian subspace of $V$.
\end{definition}
By definition a Legendrian immersion induces a filtration of $V$ for each $x\in X$: given $x\in X$, set 
$$F^3 =\bC\cdot \tilde x \subseteq  F^2 =  \tilde T \subseteq  F^1 =(F^3)^\perp \subseteq  F^0 =V,$$
where $\tilde x\in \tilde T$ are affine lifts of $\iota(x)$ and $\iota_*(T_xX)$. If
 $\iota\colon X\to \bP V$ is a Legendrian immersion, then its first prolongation is an immersion 
$$\iota^{(1)}\colon X\to \check{\mathbf{D}}$$
whose image is (locally on $X$) an $h$-dimensional horizontal submanifold of $\check{\mathbf{D}}$. Conversely, if $Y$ is any $h$-dimensional immersed  horizontal submanifold of $\check{\mathbf{D}}$ satisfying the appropriate non-degeneracy condition and $\iota \colon Y \to \Pee V$ corresponds to taking the complex line $F^3 \subseteq V$,    then $Y\to \check{\mathbf{D}}$ is the first prolongation of $\iota$.
 
\smallskip
 
For an explicit construction, let $\omega(z)$ be a generator of  $F^3$, where $z=(z_1,\dots, z_h)$ is a set of local coordinates on $(Y,0)$. We  assume that $\omega$ is in the normalized form:
\begin{equation}\label{normalize}
\omega(z) = \psi e_0 + \sum_{i=1}^h\alpha_ie_i + \sum_{i=1}^hz_if_i + f_0.
\end{equation}
Here the $z_i$ are local coordinates on the subvariety $Y$ and $\psi$ and $\alpha_i$ are functions of the $z_i$. 

\begin{proposition}\label{localz}
With the above normalization \eqref{normalize}, the local description of a horizontal subvariety $Y=Y_\varphi$ is given by 
\begin{equation}\label{eqalld}
\omega = \left(\varphi - \frac12\sum_{i=1}^h z_i\frac{\partial \varphi}{\partial z_i}\right)e_0 + \frac12\sum_{i=1}^h  \frac{\partial \varphi}{\partial z_i} e_i + \sum_{i=1}^hz_if_i + f_0.
\end{equation}
for $\varphi$ a holomorphic function of the $z_i$, i.e.\ $\D\psi = \varphi - \frac12\sum_{i=1}^h z_i\frac{\partial \varphi}{\partial z_i}$ and $\D\alpha_i = \frac12\frac{\partial \varphi}{\partial z_i}$.
\end{proposition}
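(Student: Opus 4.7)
The plan is to translate Griffiths transversality and the Lagrangian property of $F^2$ directly into equations on the coefficients $\psi$ and $\alpha_i$ of \eqref{normalize}, and then integrate.

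First, for any weight $3$ polarized Hodge structure on $V=\Lambda_\Cee$ with $h^{3,0}=1$, the polarization conditions $Q(F^p,F^q)=0$ for $p+q\ge 4$ force $F^2$ to be a Lagrangian subspace of $V$ containing $F^3=\Cee\cdot\omega$. Griffiths transversality amounts to $\partial_i\omega\in F^2$ for every $i$, and under Convention~\ref{nondegcond} the vectors $\omega,\partial_1\omega,\ldots,\partial_h\omega$ form a basis of $F^2$. So horizontality of $Y$ is equivalent to the two isotropy conditions
\[
\langle\omega,\partial_i\omega\rangle=0 \ \ (\text{all } i),\qquad \langle\partial_i\omega,\partial_j\omega\rangle=0\ \ (\text{all } i,j),
\]
where $\partial_i=\partial/\partial z_i$.

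Next, I would compute these pairings directly from \eqref{normalize}. Using $\partial_i\omega=(\partial_i\psi)e_0+\sum_j(\partial_i\alpha_j)e_j+f_i$ together with the relations $\langle e_k,f_\ell\rangle=\delta_{k\ell}$, a short calculation yields
\[
\langle\omega,\partial_i\omega\rangle=\alpha_i-\partial_i\psi-\sum_j z_j\,\partial_i\alpha_j,\qquad \langle\partial_i\omega,\partial_j\omega\rangle=\partial_i\alpha_j-\partial_j\alpha_i.
\]
Vanishing of the second expression says that the holomorphic $1$-form $\sum_i\alpha_i\,dz_i$ is closed, so by the Poincar\'e lemma there is a holomorphic function $\varphi(z)$, unique up to an additive constant, with $\alpha_i=\tfrac12\partial_i\varphi$; the factor $\tfrac12$ is a normalization chosen to make the final formula clean.

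Finally, I would substitute $\alpha_i=\tfrac12\partial_i\varphi$ into the first vanishing condition to obtain $\partial_i\psi=\tfrac12\partial_i\varphi-\tfrac12\sum_j z_j\,\partial_i\partial_j\varphi$, and verify by a direct differentiation that $\psi=\varphi-\tfrac12\sum_j z_j\,\partial_j\varphi$ is a solution; this determines $\psi$ up to an additive constant which can be absorbed into $\varphi$, yielding \eqref{eqalld}. The converse---that any $\varphi$ produces a horizontal subvariety via \eqref{eqalld}---is visible from the very same identities. The argument is essentially formal; there is no serious obstacle, and the only real care needed is in keeping track of sign conventions in the symplectic pairing.
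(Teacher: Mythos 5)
Your proof is correct, and its computational core coincides with the paper's, but the integration step is organized differently. The paper imposes only the single exterior differential condition $\langle \omega, d\omega\rangle = 0$ (whose coefficient in $dz_i$ is exactly your $\alpha_i - \partial_i\psi - \sum_j z_j\,\partial_i\alpha_j$) and integrates it in one stroke via the Legendre-type substitution $t_0 = \psi + \sum_i z_i\alpha_i$, $s_i = 2\alpha_i$, which turns the condition into $dt_0 = \sum_i s_i\,dz_i$ and hence immediately produces the potential $\varphi$ with $s_i = \partial\varphi/\partial z_i$. You instead take as input both isotropy conditions $\langle\omega,\partial_i\omega\rangle=0$ and $\langle\partial_i\omega,\partial_j\omega\rangle=0$, use the second (closedness of $\sum_i\alpha_i\,dz_i$) to integrate the $\alpha_i$ first, and then solve for $\psi$ by a second integration. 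Note that your second condition is in fact redundant: differentiating $\langle\omega,\partial_i\omega\rangle=0$ in $z_j$ gives $\langle\partial_j\omega,\partial_i\omega\rangle = -\langle\omega,\partial_i\partial_j\omega\rangle$, and comparing the skew-symmetry of the left side with the symmetry of the right side forces $\langle\partial_i\omega,\partial_j\omega\rangle=0$; so the Lagrangian property of $F^2$ comes for free from transversality here, which is why the paper never invokes it. Your route is essentially the one the paper attributes to Voisin in the remark following Proposition~\ref{lcubic} (her Lemma~3.4 is precisely the symmetry $\partial_i\alpha_j=\partial_j\alpha_i$ guaranteeing the potential), whereas the paper's substitution makes the contact-geometric structure $dt_0 - \sum_i s_i\,dz_i$ explicit; both are complete, and your bookkeeping of the sign conventions and of the additive-constant ambiguity (absorbed by $\varphi\mapsto\varphi+c$) is accurate.
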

\begin{proof}
The relevant exterior differential system is given by
$$
0=\langle \omega, d\omega\rangle = \langle \omega, d\psi \cdot e_0 + \sum_{i=1}^hd\alpha_i\cdot e_i + \sum_{i=1}^hdz_i\cdot f_i \rangle = -d\psi - \sum_{i=1}^h z_id\alpha_i + \sum_{i=1}^h\alpha_i dz_i.
$$
It follows that $d\psi + \sum_{i=1}^h z_id\alpha_i - \sum_{i=1}^h\alpha_i dz_i=0$. 
Setting $t_0 = \psi + \sum_{i=1}^h z_i\alpha_i$ and $s_i=2\alpha_i$ gives
\begin{align*}
dt_0 -\sum_{i=1}^hs_idz_i &= d\psi + \sum_{i=1}^h z_id\alpha_i + \sum_{i=1}^h\alpha_i dz_i-2\sum_{i=1}^h\alpha_i dz_i\\
&= d\psi + \sum_{i=1}^h z_id\alpha_i - \sum_{i=1}^h\alpha_i dz_i.
\end{align*}
This is then solved by $t_0 = \varphi(z_1, \dots, z_h)$, an arbitrary function of the $z_i$, and $\D s_i =\frac{\partial \varphi}{\partial z_i}$. Explicitly, one gets the formula \eqref{eqalld}.
\end{proof}

Given the choice of $\omega$, there is a cubic form on the tangent bundle of a horizontal subvariety by the formula
$$\Xi (\xi_1, \xi_2, \xi_3) = \left\langle \nabla_{\xi_1}\nabla_{\xi_2}\nabla_{\xi_3}\omega, \omega\right\rangle.$$
 A  computation left to the reader shows  that the cubic form  is expressed as follows (see \cite[\S5]{friedmancy}).
\begin{proposition}\label{lcubic}
With notation as above, the cubic form for $Y_\varphi$  is given by
\begin{equation}\label{cubic}
\Xi \left(\frac{\partial}{\partial z_r},\frac{\partial}{\partial z_s},\frac{\partial}{\partial z_t}\right)=-\frac12\frac{\partial^3 \varphi}{\partial z_r\partial z_s\partial z_t}.
\end{equation}
In particular, $\Xi$ is constant iff $\varphi$ is a polynomial of degree at most $3$. \qed
\end{proposition}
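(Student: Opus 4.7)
The proof should be a direct calculation starting from the explicit formula \eqref{eqalld} for $\omega$. The key observation is that the connection $\nabla$ is the trivial (Gauss--Manin) connection with respect to the fixed symplectic basis $e_0,\dots,e_h,f_1,\dots,f_h,f_0$, so for a local section $\omega$ of $F^3$ written as a sum $\omega=\sum A_I(z)\epsilon_I$ with $\epsilon_I$ in that basis, we have $\nabla_{\partial/\partial z_r}\omega=\sum(\partial A_I/\partial z_r)\,\epsilon_I$. The plan is then to compute $\partial_t\partial_s\partial_r\omega$ iteratively and pair it against $\omega$ using the symplectic form.

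First, I would differentiate once. Writing $\varphi_r=\partial\varphi/\partial z_r$, etc., the $e_0$ coefficient of $\omega$ is $\varphi-\tfrac12\sum z_i\varphi_i$, whose derivative in $z_r$ gives $\tfrac12\varphi_r-\tfrac12\sum z_i\varphi_{ir}$; the $e_i$ coefficient becomes $\tfrac12\varphi_{ir}$; the $f_r$ coefficient becomes $1$; and the $f_0$ coefficient dies. Differentiating a second time in $z_s$, the crucial cancellation $\tfrac12\varphi_{rs}-\tfrac12\varphi_{sr}=0$ kills the non-$z$ part of the $e_0$ coefficient, leaving $\partial_s\partial_r\omega=-\tfrac12\sum z_i\varphi_{irs}\,e_0+\tfrac12\sum\varphi_{irs}\,e_i$. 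A third derivative in $z_t$ then yields
\[
\nabla_{\partial_t}\nabla_{\partial_s}\nabla_{\partial_r}\omega=\left(-\tfrac12\varphi_{rst}-\tfrac12\sum_i z_i\varphi_{irst}\right)e_0+\tfrac12\sum_i \varphi_{irst}\,e_i.
\]

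Next I would pair against $\omega$. In the symplectic form, $\langle e_0,f_0\rangle=1$ and $\langle e_i,f_i\rangle=1$, while all $e_*$-$e_*$ and $f_*$-$f_*$ pairings vanish. Since the $f_0$ and $f_i$ components of $\nabla_{\partial_t}\nabla_{\partial_s}\nabla_{\partial_r}\omega$ are zero, only the $e_0$ and $e_i$ components contribute, pairing respectively with the $f_0$-coefficient $1$ and the $f_i$-coefficients $z_i$ of $\omega$. This gives
\[
\langle\nabla_{\partial_t}\nabla_{\partial_s}\nabla_{\partial_r}\omega,\omega\rangle=-\tfrac12\varphi_{rst}-\tfrac12\sum_i z_i\varphi_{irst}+\tfrac12\sum_i z_i\varphi_{irst}=-\tfrac12\varphi_{rst},
\]
which is exactly \eqref{cubic}. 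The only mildly delicate point is verifying the sign conventions in the symplectic pairing and confirming that no contributions arise from the $f$-part of the third derivative; this should be routine since after the first differentiation the $f$-coefficients are already constants, so further derivatives kill them. The ``in particular'' statement is immediate: a symmetric tensor of third derivatives is constant in $z$ if and only if $\varphi$ is a polynomial of degree at most three.
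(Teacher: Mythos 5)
Your calculation is correct and is exactly the computation the paper has in mind: the paper's ``proof'' simply states that ``a computation left to the reader'' gives \eqref{cubic} (citing \cite[\S5]{friedmancy}), and your iterated differentiation of \eqref{eqalld} in the flat symplectic frame, with the cancellation $\tfrac12\varphi_{rs}-\tfrac12\varphi_{sr}=0$ at the second step and the cancellation of the $\sum_i z_i\varphi_{irst}$ terms in the final pairing, is precisely that computation. The sign bookkeeping ($\langle e_0,f_0\rangle=\langle e_i,f_i\rangle=1$, no $f$-components surviving in the third derivative) and the ``in particular'' deduction are both handled correctly.
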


\begin{remark}
Compare the above results with \cite[Lemma 3.4]{voisinmirror} which says $\D\frac{\partial \alpha_i}{\partial z_j}=\frac{\partial \alpha_j}{\partial z_i}$ in the notation of Equation~\eqref{normalize}. This guarantees the existence of a \textsl{potential} $\varphi$ (denoted $F$ in \cite[p. 42]{voisinmirror}). Then, Proposition \ref{lcubic} is just \cite[Prop. 3.3]{voisinmirror}.
\end{remark}

\subsection{The global case}\label{globalconst}
We  now assume that \eqref{eqalld} is actually a description of an affine open subset of a  horizontal subvariety of $\check\bD$. Specifically, we assume that $Y\subset \bD$ satisfies $Y\cong\bA^h$ and its embedding in $\bD$ is given by \eqref{eqalld} for  some  polynomial $\varphi$ in $z=(z_1,\dots,z_h)\in \bA^h$. Additionally, suppose that the variety $Y$ is invariant under a maximally unipotent $T\in \Sp(2h+2, \Zee)$ preserving the filtration $W_\bullet$ (and defining it in the sense of monodromy weight filtrations). It is easy to see that $\varphi$ then satisfies a difference equation with terms which are polynomials in the $z_i$ of degree at most two. For example, in case $h=1$, with $T$ given by  $Tf_0 = f_0 + af_1+ be_1+ce_0$, $Tf_1 = f_1+de_1+(b-ad)e_0$, $Te_1=e_1-ae_0$ and $Te_0=e_0$,  the maximal unipotency condition says that $ad\neq 0$, and the function $\varphi$ satisfies the difference equation: 
\begin{equation}\label{eqdiff}
\varphi (z+a) -\varphi(z) = dz^2 + 2bz +(ab+c).
\end{equation}
This is only possible in general if the polynomial $\varphi$ has degree at most $3$. A similar statement, under mild nondegeneracy assumptions or by considering instead a system of $h$ commuting difference equations, is true in case $h>1$ as well. Thus, it is natural to make the following assumption on $\varphi$: 
\begin{convention}
The function $\varphi$ of \eqref{eqalld} is a \textbf{homogeneous} polynomial of degree $3$.
\end{convention}
In this case, by Euler's theorem, we can write
\begin{equation}
\omega =   - \frac12\varphi e_0 + \frac12\sum_{i=1}^h  \frac{\partial \varphi}{\partial z_i} e_i + \sum_{i=1}^hz_if_i + f_0.
\end{equation}
However, to avoid the factors of $\frac12$, we replace  $\frac12\varphi$ by $\varphi$, and assume that $Y$ is described via
\begin{equation}\label{eqalld2}
\omega =   - \varphi e_0 + \sum_{i=1}^h  \frac{\partial \varphi}{\partial z_i} e_i + \sum_{i=1}^hz_if_i + f_0,
\end{equation}
where again $\varphi$ is a homogeneous polynomial of degree $3$ in $z_1, \dots, z_h$.
Conversely, if $Y$ is so defined, then a calculation shows that $Y$ is invariant under the (symplectic) transformation $T_v$ defined by
\begin{align}\label{defTv}
T_vf_0 &= f_0 + \sum_i v_if_i + \sum_i\frac{\partial \varphi}{\partial z_i}(v)e_i + (-\varphi(v))e_0;\\
T_vf_i &= f_i + \sum_j\frac{\partial^2 \varphi}{\partial z_i\partial z_j}(v)e_j + \left(-\frac{\partial \varphi}{\partial z_i}(v)\right)e_0 \qquad (i\neq 0);\notag\\
T_ve_i &= e_i - v_ie_0 \qquad (i\neq 0);\notag\\
T_ve_0 &= e_0,\notag
\end{align}
because $T_v\omega(z) = \omega(z+v)$.  Note that $T_v$ is rational if $\varphi$ has rational coefficients and $v\in \Q^h$, and similarly $T_v$ is real if $\varphi$ has real coefficients and $v\in \R^h$.  Thus there is an action on $Y$ of an abelian unipotent group $U$ isomorphic to $\mathbb{G}_a^h$. There is also a (non-symplectic) action of $\mathbb{G}_m$ on $Y$: given $\lambda\in \mathbb{G}_m$, the automorphism $S_\lambda$ of $\Pee^{2h+1}$  defined by $S_\lambda(e_0) = \lambda^3e_0$, $S_\lambda(e_i) = \lambda^2e_i$, $S_\lambda(f_i) = \lambda f_i$, and $S_\lambda(f_0) = f_0$ satisfies $S_\lambda\omega(z) = \omega(\lambda z)$.  For $\varphi$ general, these are in fact all of the projective automorphisms of $Y$, as we shall see below.

\begin{remark} 
\begin{itemize}
\item[(1)] If instead of assuming that the subvariety $Y$ is invariant under a system of $h$ commuting difference equations, we assume that $Y$ is invariant under an $h$-dimensional unipotent abelian subgroup of the complex symplectic group satisfying the appropriate conditions, then one can always choose a complex symplectic basis in which $\varphi$ is homogeneous. This argument is essentially given in the course of the proof of Theorem~\ref{hermcubiccase} below.
\item[(2)]  With $T_v$ defined as in Equation~\eqref{defTv}, consider $ T_v-\Id$, which differs from $N_v=\log T_v$ by an invertible matrix commuting with $T_v$ and $N_v$, and let $C\subseteq \Pee^{h-1}$ be the cubic hypersurface $V(\varphi)$ defined by the homogeneous polynomial $\varphi$. Then $N_v^2 =0, N_v \neq 0$ $\iff$ $v\neq 0$ and $v$ defines a point $\bar{v}$ of $\Pee^{h-1}$ lying on $\operatorname{Sing}C$, $N_v^3 =0, N_v^2 \neq 0$ $\iff$ the point $\bar{v}$ of $\Pee^{h-1}$ lies on $C-\operatorname{Sing}C$, and $N_v^3 \neq 0$ $\iff$ the point $\bar{v}$ of $\Pee^{h-1}$ does not lie on $C$.
\end{itemize}
\end{remark}

Equation  \eqref{eqalld2} can be interpreted as a map $F\colon Y(\cong\bA^h) \to  \bA^{2h+2}$ defined by
$F(z) = \omega(z)$.
 Since the period is only defined  up to scaling, we homogenize and obtain a rational map
$\Pee^h \dasharrow \Pee^{2h+1}$ via:
$$F(z_1, \dots, z_h, t) =  -\varphi(z)e_0 + t\sum_{i=1}^h\frac{\partial \varphi}{\partial z_i}e_i + t^2\sum_{i=1}^h z_if_i + t^3f_0.$$
Thus $F$ is a dominant rational map from $\Pee^h$ to $\overline{Y}$, the closure of $Y$ in $\bP^{2h+1}$, which fails to be defined at the codimension $2$ subvariety $t= \varphi = 0$, the cubic hypersurface $C = V(\varphi)$ defined by the homogeneous cubic polynomial $\varphi$ in  the hyperplane $H \cong \Pee^{h-1}\subseteq \Pee^h$ defined by $t=0$ and with homogeneous coordinates $z_1, \dots, z_h$. Note that $F$ is regular on the affine open $\mathbb{A}^h$ defined by $t\neq 0$, and that the projection of $\Pee^{2h+1}$ onto $\Pee^h$ defined by taking the coordinates corresponding to $f_1, \dots, f_h, f_0$ induces an isomorphism on the corresponding affine open subsets $\mathbb{A}^h$. Also, if $h=1$, $F$ embeds $\Pee^1$ in $\Pee^3$ as a rational normal cubic.

\smallskip

Clearly the rational map $F$ corresponds to a subseries of the complete linear series $|\scrO_{\Pee^h}(3) -C|$ of cubics on $\Pee^h$ passing through $C$. Let $X$ be the blowup of $\Pee^h$ along $C$, with exceptional divisor $E$, ruled over $C$ via $\rho\colon E \to C$. The proper transform $H'$ of $H$ is then exceptional, i.e.\ the normal bundle $\scrO_{H'}(H')$ corresponds to $\scrO_{\Pee^{h-1}} (-2)$. Let $\overline{X}$ be the normal variety which is the contraction of $H'$ and let $\overline{E}\subseteq \overline{X}$ be the image of $E$, i.e.\ the  contraction of $E$ along $H'\cap E$. Clearly $E\cong \Pee(\scrO_{C}(1) \oplus \scrO_{C}(3)) \cong \Pee(\scrO_{C}\oplus \scrO_{C}(2))$ and $H'\cap E$ is the negative section.

\begin{proposition} With notation as above, suppose that $C$ is smooth.
\begin{itemize}
\item[(i)] The complete linear system $|\scrO_{\Pee^h}(3) -C|$ defines a base point free linear system on $X$. The associated morphism $\varphi$ blows down $H'$ and embeds $\overline{E}$ as the cone over the Veronese image of $C$ under $\scrO_{H}(2)$.
\item[(ii)] The subsystem of $|\scrO_{\Pee^h}(3) -C|$ induced by $F$, i.e.\ the linear system $\Sigma$ of cubics on $\Pee^h$ spanned by $\varphi(z)$, $t\partial \varphi/\partial z_i$, $t^2z_i$, and $t^3$, defines a base point free finite birational morphism $\overline{X}\to \overline{Y}\subseteq \Pee^{2h+1}$ which is an embedding on the affine open subset  $\mathbb{A}^h = \Pee^h -H \cong \overline{X} - \overline{E}=Y$.
\end{itemize}
\end{proposition}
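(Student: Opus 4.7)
The plan is to analyze the line bundle $L = 3\pi^*H - E$ on $X$, whose global sections are in natural bijection with cubics on $\Pee^h$ vanishing on $C$. The key preliminary computation is that $L|_{H'}$ is trivial: since $C$ has codimension one in $H$ but codimension two in $\Pee^h$, we have $\pi^*H = H' + E$ on $X$, so restricting to $H' \cong H = \Pee^{h-1}$ yields $(\pi^*H)|_{H'} = \scrO_{\Pee^{h-1}}(1)$ and $\scrO_X(E)|_{H'} = \scrO_{H'}(C) = \scrO_{\Pee^{h-1}}(3)$, whence $L|_{H'} \cong \scrO_{H'}$. In particular, the rational map defined by $|L|$ contracts $H'$ to a single point, which I denote $Q$.

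For (i), I first verify base-point freeness of $|L|$ region by region: off $\pi^{-1}(H) = H' \cup E$, the system pulls back from the full cubic system on $\Pee^h$ and is base-point free there; on $H'$, the section $\varphi$ itself is nonvanishing because a local chart computation on the blowup shows $V' \cap H' = \emptyset$; and on $E \setminus (H' \cap E)$, sections of the form $t\cdot \ell(z,t)$ for a linear form $\ell$ provide nonvanishing sections. To identify the image $\overline{E}$: on each ruling $E_p \cong \Pee^1$, the formula $\scrO_X(E)|_{E_p} = \scrO(-1)$ gives $L|_{E_p} = \scrO(1)$, so $|L|$ sends each ruling to a line through $Q$. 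A divisorial calculation yields $N_{H'\cap E/E} = N_{H'/X}|_{H'\cap E} = \scrO_{\Pee^{h-1}}(-2)|_C = \scrO_C(-2)$, so $E \cong \Pee_C(\scrO_C \oplus \scrO_C(2))$ with $H' \cap E$ as the negative section. On the opposite (positive) section, $L$ restricts to $\scrO_C(2)$, and since $\calI_{C/H}(2) = \scrO_H(-1)$ has vanishing $H^1$, every section of $\scrO_C(2)$ extends to $\scrO_H(2)$; hence this section embeds as the Veronese image of $C$ under $\scrO_H(2)$. Collapsing $H' \cap E$ to $Q$ realizes $\overline{E}$ as the cone over this Veronese image with vertex $Q$.

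For (ii), the $2h+2$ generators of $\Sigma$ are linearly independent (distinct bidegrees in $(z,t)$), so $\dim\Sigma = 2h+1$. Since $\Sigma \subset |L|$, the map $F$ factors through the morphism of (i) composed with a linear projection; hence $F$ contracts $H'$ and extends to a morphism $\overline{X} \to \Pee^{2h+1}$ once base-point-freeness is reverified for $\Sigma$ (the sections $\varphi$ and the $t^2 z_i$ suffice). On $\bA^h = \{t \neq 0\}$, setting $t = 1$ gives $F(z) = (-\varphi(z), \nabla\varphi(z), z, 1)$; the last $h+1$ coordinates recover $z$, yielding both the embedding on $\bA^h = \overline{X} \setminus \overline{E}$ and birationality of $F$. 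The main obstacle is finiteness of $F$ on $\overline{E}$. In the blowup chart with equation $\varphi = rt$, the restriction of $F$ to $E = \{t = 0\}$ equals $(z,r) \mapsto [-r : \nabla\varphi(z) : 0 : \cdots : 0]$, which factors through the Gauss map $p \mapsto [\nabla\varphi(p)]$ of $C \subset \Pee^{h-1}$; for smooth $C$ of positive dimension this is a finite morphism (since $C$ is not a linear subspace), and the degenerate low-dimensional cases are direct. Combining with the properness of $\overline{X}$ gives the finiteness claim.
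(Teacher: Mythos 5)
Your proposal is correct in outline and supplies far more detail than the paper, whose entire argument reads: ``(i) is straightforward; for (ii), one can check by hand that $\Sigma$ is base point free,'' with finiteness then deduced formally from the fact that $\Sigma$ is a base-point-free subsystem of a very ample system on $\overline{X}$ (a contracted curve would have degree zero against an ample class), and birationality from the embedding on $\bA^h$. Your treatment of (i) fills in exactly what the paper leaves implicit: $L|_{H'}\cong \scrO_{H'}$ via $\pi^*H = H'+E$, fiber degree one on the rulings, the identification of $H'\cap E$ as the negative section via $N_{H'\cap E/E}\cong \scrO_C(-2)$, and surjectivity of restriction to the positive section from $H^1(\scrO_H(-1))=0$. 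For the finiteness in (ii), however, you take a genuinely different route: instead of invoking very ampleness of the complete system on $\overline{X}$, you compute $F$ on $E$ in the chart $\varphi=rt$ and reduce finiteness on $\overline{E}$ to finiteness of the Gauss map of the smooth cubic $C$ (which holds most cleanly because the Gauss map pulls $\scrO(1)$ back to the ample $\scrO_C(2)$, so it can contract no curve). This is more self-contained than the paper's appeal to (i), and it yields a byproduct the paper does not record: under $\Sigma$, the image of $\overline{E}$ is the cone with vertex $Q$ over the \emph{Gauss} image of $C$, the projection of the Veronese cone of (i).

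Two local repairs are needed. In (i), the auxiliary sections on $E\setminus (H'\cap E)$ must be cubics: $t\cdot \ell$ with $\ell$ linear is only a quadric; you want $t\cdot q$ with $q$ a quadric satisfying $q(p)\neq 0$ at $p=\rho(e)$, whose divisor as a section of $L$ is $H'+\pi^*V(q)$. More substantively, your parenthetical verifying set for base point freeness of $\Sigma$ fails: the sections $t^2z_i$ have divisor $2H'+E+\pi^*V(z_i)$ and hence vanish identically on $E$, and at the point $[0:\cdots:0:1]$ both $\varphi$ and all the $t^2z_i$ vanish. The correct check --- which is precisely where smoothness of $C$ enters part (ii) --- is: $t^3$ handles $\{t\neq 0\}$; the section $\varphi$, whose divisor is the proper transform $V'$ of the cone, is nonvanishing on $H'$ and on $E$ away from the positive section $C_\infty = V'\cap E$; and on $C_\infty$ one needs the sections $t\,\partial \varphi/\partial z_i$, with divisors $H'+\pi^*V(\partial \varphi/\partial z_i)$, which have no common zero there exactly because the partials of $\varphi$ have no common zero on the smooth $C$. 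Relatedly, your ``distinct bidegrees'' argument for $\dim \Sigma = 2h+1$ does not separate the $h$ sections $t\,\partial\varphi/\partial z_i$ from one another; their linear independence again follows from smoothness, since a relation $\sum_i c_i\,\partial\varphi/\partial z_i = 0$ would make $C$ a cone, singular at its vertex by Euler's formula. With these repairs your argument is complete.
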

\begin{proof} (i) is straightforward. For (ii), one can check by hand that $\Sigma$ is base point free. It then automatically induces a finite morphism on $\overline{X}$ since the induced linear system is  a subsystem of a very ample linear system, and it is birational since it is an embedding on the open subset $\mathbb{A}^h$.
\end{proof}

\begin{theorem}\label{thmstabilizersmooth} 
Let $h>1$. If $C$ is smooth, then the identity component of $\operatorname{Aut}\overline{Y}\subset \PGL(2h+2)$ is isomorphic to the identity component of the group of automorphisms of $\Pee^h$ fixing the hyperplane $H$ and acting as the identity on $H$, and hence is isomorphic to the semidirect product of $U\cong \mathbb{G}_a^h$ and $\mathbb{G}_m$.
\end{theorem}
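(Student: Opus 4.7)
The plan is to establish the nontrivial inclusion $\operatorname{Aut}(\overline Y)^0 \hookrightarrow U\rtimes\Gm$, using the birational chain $\overline Y \leftarrow \overline X \leftarrow X \to \Pee^h$; the reverse inclusion is already explicit via $T_v$ and $S_\lambda$. First, I would verify that $\overline X \to \overline Y$ is the normalization: $\overline X$ is the contraction of $H'\cong\Pee^{h-1}\subset X$ along its normal bundle $\scrO(-2)$, so it has a single isolated singularity at the image $p$ of $H'$, namely the normal cone-over-Veronese singularity $\mathbb{A}^h/\{\pm 1\}$. Being a finite birational morphism onto $\overline Y$ from a normal source, $\overline X \to \overline Y$ is the normalization, so each $\sigma\in\operatorname{Aut}(\overline Y)$ lifts canonically to $\operatorname{Aut}(\overline X)$; since $p$ is the unique singular point, $\sigma$ further lifts to $\operatorname{Aut}(X)$ through the canonical blowup $X\to\overline X$ at $p$.

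Next, for $\sigma \in \operatorname{Aut}(\overline Y)^0$, its lift to $\operatorname{Aut}(X)^0$ acts trivially on the discrete group $\Pic(X)$, so it preserves the class of each irreducible component of the exceptional divisor $E$ of $\pi\colon X\to\Pee^h$. From the short exact sequence
\begin{equation*}
0\to\scrO_X\to\scrO_X(E)\to\scrO_E(-1)\to 0
\end{equation*}
(here $\scrO_E(E)\cong\scrO_E(-1)$ is the tautological bundle on $E=\Pee(N_{C/\Pee^h})$) and the vanishing $h^0(E,\scrO_E(-1))=0$ along the $\Pee^1$-fibers, we deduce $h^0(X,\scrO_X(E))=1$. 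Hence each component of $E$ is the unique effective divisor in its class and is preserved setwise, so $\sigma$ descends to $\bar\sigma\in\operatorname{Aut}(\Pee^h,C)^0\subseteq\operatorname{Aut}(\Pee^h,H,C)^0$ (the last inclusion because $C$ spans $H$).

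The final step is to show $\operatorname{Aut}(\Pee^h,H,C)^0 = U\rtimes\Gm$. The restriction map to $\operatorname{Aut}(H,C)$ has kernel precisely the subgroup of transformations fixing $H$ pointwise, namely $U\rtimes\Gm$, so it suffices to prove $\operatorname{Aut}(H,C)$ is finite. For $h=2$, $C$ is three distinct points in $\Pee^1$ and the group is a subgroup of $S_3$; for $h=3$, $C$ is an elliptic plane cubic and $\operatorname{Aut}(\Pee^2,C)$ is an extension of the finite origin-stabilizer of $C$ by the three-torsion subgroup of $C$; for $h\geq 4$, finiteness is the classical Matsumura--Monsky theorem for smooth projective hypersurfaces of degree $\geq 3$. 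This yields the desired equality $\operatorname{Aut}(\overline Y)^0 = U\rtimes\Gm$.

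The main obstacle is ensuring that the chain of liftings and descents is genuinely canonical and compatible with identity components; this reduces to two geometric facts highlighted above, namely the normality of the single cone singularity on $\overline X$ (so that the first lift truly lands in the normalization) and the uniqueness of $E$ in its linear equivalence class on $X$ (so that $\operatorname{Aut}(X)^0$ descends to $\Pee^h$). Both hinge on the smoothness of $C$ together with $h>1$.
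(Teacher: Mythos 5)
Your proposal is correct and follows essentially the same route as the paper: identify $\overline{X}\to\overline{Y}$ as the normalization, lift through the blowup to $\operatorname{Aut}(X)$, use triviality of the identity component's action on $\Pic X$ together with the rigidity of $E$ in its linear equivalence class to descend to $\operatorname{Aut}(\Pee^h, H, C)$, and conclude via finiteness of the linear automorphisms of the smooth cubic $C$ for $h>1$. Your only deviations are cosmetic: you compute $h^0(X,\scrO_X(E))=1$ by the exact sequence where the paper notes any effective divisor in the class of $E$ must contain all $\Pee^1$-fibers of $\rho\colon E\to C$, you deduce preservation of $H$ from the fact that $C$ spans $H$ rather than from $f^{-1}(H')=H'$, and you make explicit the finiteness cases ($h=2,3$, and Matsumura--Monsky for $h\geq 4$) that the paper asserts without citation.
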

\begin{proof} Since $\overline{X}\to \overline{Y}$ is a finite birational morphism, it identifies  $\overline{X}$ with the normalization of  $\overline{Y}$. Hence every automorphism $f$ of $\overline{Y}$ lifts to an automorphism of $\overline{X}$ which fixes the unique singular point and thus induces an automorphism of $X$, also denoted $f$, with $f^{-1}(H') = H'$. Since we are only considering the identity component, it follows that $f^*$ is the identity on $\Pic X$. Hence $f^{-1}(E)$ is an effective divisor linearly equivalent to $E$, and so $f^{-1}(E) = E$ since $f^{-1}(E)$ must contain every $\Pee^1$ fiber of the morphism $\rho\colon E\to C$. It follows that $f$ induces and is induced by an automorphism of $\Pee^h$, which we continue to denote by $f$ and now view as an element of $\PGL(h+1)$, such that $f(H) = H$ and $f(C) = C$. As $C$ is a cubic in $H\cong \Pee^{h-1}$ with $h>1$, there are only finitely many automorphisms of $H$ fixing $C$. Thus the restriction of $f$ to $H$ is the identity by connectedness. After normalizing $f$ by a scalar, we can then assume that $f(z_1, \dots, z_h, t) = (z_1 + v_1t, \dots, z_h + v_ht, \lambda t)$ as claimed.
\end{proof}

\begin{remark} The proof of Theorem~\ref{thmstabilizersmooth} can be interpreted as saying that, in case $C$ is smooth, the Legendrian submanifold $F(Y)\subseteq  \bA^{2h+1}\subseteq \Pee^{2h+1}$ defined by
$F(z) = \omega(z)$ cannot be completed to a (projective) Legendrian submanifold of $\Pee^{2h+1}$; in fact, the normalization of the closure $\overline{Y}$ has a unique singular point.
\end{remark}

\subsection{The Hodge--Riemann bilinear relations}\label{secthr}
We now discuss the inequalities imposed on $C$ by the Riemann-Hodge bilinear relations. Some related results, from the point of view of the mirror manifold, have been  given by Trenner--Wilson and Trenner  \cite{trennerw}, \cite{trenner}.

We continue to use the normalization
$$\omega(z) = -\varphi(z)e_0 + \sum_{i=1}^h\frac{\partial \varphi}{\partial z_i}e_i + \sum_{i=1}^h z_if_i + f_0,$$
and assume throughout this  subsection that $\varphi$ has \textbf{real} coefficients.

\begin{theorem}\label{thmhr} 
Let $z= (z_1, \dots, z_h)\in \Cee^h$ and let $y = (y_1, \dots , y_h)\in \R^h$, where $y_i = \operatorname{Im} z_i$. Then the open set of $z\in \Cee^h$ where the Hodge--Riemann inequalities are satisfied is given by
 the set of $z\in \Cee^h$ such that $\varphi(y) < 0$ and the signature of the quadratic form $\D \left (\frac{\partial ^2\varphi}{\partial z_i \partial z_j}\right)(y)$ is $(h-1,1)$.
 \end{theorem}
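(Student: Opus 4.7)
The plan is to use the real symplectic translation action constructed in \S\ref{globalconst} to reduce to $\re z=0$, and then to read off the Hodge--Riemann inequalities as explicit conditions on $\varphi(y)$ and on the Hessian $(\varphi_{ij}(y))$.

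First, I observe that for $v\in \R^h$ the transformation $T_v$ of \eqref{defTv} lies in $\Sp(V,Q)_\R$ and satisfies $T_v\omega(z)=\omega(z+v)$. Taking $v=-x$ therefore sends the Hodge filtration at $z=x+iy$ onto the filtration at $iy$ while preserving the polarization $Q$, so the Hodge--Riemann inequalities hold at $z$ if and only if they hold at $iy$. I may therefore assume $z=iy$. Using that $\varphi$ is a homogeneous cubic with real coefficients I have $\varphi(iy)=-i\varphi(y)$, $\varphi_j(iy)=-\varphi_j(y)$, $\varphi_{jk}(iy)=i\varphi_{jk}(y)$, and the Euler identities $\sum_j y_j\varphi_j(y)=3\varphi(y)$, $\sum_j y_j\varphi_{ij}(y)=2\varphi_i(y)$. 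Writing $\omega_i:=\partial\omega/\partial z_i$ and expanding everything in the symplectic basis, a direct computation produces, at $z=iy$,
$$Q(\omega,\bar\omega)=8i\,\varphi(y),\qquad Q(\omega_i,\bar\omega)=4\,\varphi_i(y),\qquad Q(\omega_i,\bar\omega_j)=2i\,\varphi_{ij}(y).$$

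The Hodge--Riemann relation on $H^{3,0}$, which for weight three has the form $iQ(\omega,\bar\omega)>0$, then reduces to $\varphi(y)<0$. For the relation on $H^{2,1}=F^2\cap \bar F^1$, I first correct the $\omega_i\in F^2$ into $H^{2,1}$ by setting $v_i:=\omega_i-\lambda_i\omega$ with $\lambda_i:=Q(\omega_i,\bar\omega)/Q(\omega,\bar\omega)=-i\varphi_i(y)/(2\varphi(y))$, so that $Q(v_i,\bar\omega)=0$. A routine expansion, using also $Q(\omega,\bar\omega_j)=-4\varphi_j(y)$, then yields
$$-iQ(v_i,\bar v_j)=2\Bigl(\varphi_{ij}(y)-\frac{\varphi_i(y)\varphi_j(y)}{\varphi(y)}\Bigr),$$
so given $\varphi(y)<0$, the remaining Hodge--Riemann condition is equivalent to positive-definiteness of the symmetric matrix $B_{ij}:=\varphi_i(y)\varphi_j(y)-\varphi(y)\varphi_{ij}(y)$.

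Finally, I will translate positivity of $B$ into a signature statement about the Hessian $H:=(\varphi_{ij}(y))$. Since $\sum_i y_i\varphi_i(y)=3\varphi(y)\neq 0$, I have the decomposition $\R^h=\R y\oplus W$ with $W=\{v:\sum_i v_i\varphi_i(y)=0\}$. Applying Euler's identity to $\varphi_i$ and $\varphi_{ij}$ shows that both $H$ and $B$ are block-diagonal with respect to this decomposition, with $H(y,y)=6\varphi(y)$, $B(y,y)=3\varphi(y)^2$, and $B|_W=-\varphi(y)\cdot H|_W$. Under the assumption $\varphi(y)<0$, $B$ is therefore positive definite if and only if $H|_W$ is positive definite, which, combined with $H(y,y)<0$, is equivalent to $H$ having signature $(h-1,1)$, finishing the proof. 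The main obstacle is careful sign bookkeeping (the Hodge--Riemann sign conventions for weight three combined with the sign factors produced by the symplectic pairing and the substitution $z=iy$); once those are fixed, everything else reduces to the Euler relation applied to a cubic and to the $H$-orthogonal splitting determined by $\nabla\varphi(y)$.
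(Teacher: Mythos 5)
Your proposal is correct and follows essentially the same route as the paper's proof: the same three pairing computations $\langle\omega,\bar\omega\rangle = 8\sqrt{-1}\,\varphi(y)$, $\langle\omega_i,\bar\omega\rangle = 4\varphi_i(y)$, $\langle\omega_i,\bar\omega_j\rangle = 2\sqrt{-1}\,\varphi_{ij}(y)$, the same correction $\psi_i = \omega_i - \bigl(\langle\omega_i,\bar\omega\rangle/\langle\omega,\bar\omega\rangle\bigr)\omega$ into $V^{2,1}$, and the same signature analysis via the Euler identities and the splitting $\R^h = \R y \oplus \Upsilon^\perp$. The only cosmetic difference is your preliminary reduction to $\re z = 0$ using the real symplectic translations $T_{-x}$, which the paper achieves implicitly by computing at general $z$ and observing that all the resulting expressions depend only on $y = \im z$.
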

\begin{proof} We shall just outline the argument and omit many of the calculations. The Hodge--Riemann inequalities are the statement that $\sqrt{-1}\langle \omega , \bar{\omega}\rangle > 0$ and that the   form on $V^{2,1}$ given by $H(\psi) = \sqrt{-1}\langle \psi , \bar{\psi}\rangle $ is negative definite. Then the fact that $z$ must satisfy $\varphi(y) < 0$ follows from:

\begin{equation}  \langle \omega , \bar{\omega}\rangle = 8\sqrt{-1}\varphi(y).  
\end{equation}

Next, to compute the sign on $V^{2,1}$, define
$$\omega_i =\frac{\partial \omega}{\partial z_i} = -\frac{\partial \varphi}{\partial z_i}e_0 + \sum _{j=1}^h\left(\frac{\partial ^2\varphi}{\partial z_i \partial z_j}\right)e_j + f_i.$$
Then a computation gives
\begin{equation} 
 \langle \omega_i, \bar{\omega}_j \rangle = 2\sqrt{-1}\frac{\partial ^2 \varphi}{\partial z_i \partial z_j}(y) .  
\end{equation}
We must modify $\omega_i$ by a multiple of $\omega$ to make it orthogonal to $\bar{\omega}$, and hence an element of $V^{2,1}$. To find the correct multiple,  use:
\begin{equation}  \langle \omega_i, \bar{\omega}  \rangle = - \frac{\partial \varphi}{\partial z_i}(z-\bar{z}) = - \frac{\partial \varphi}{\partial z_i}(2\sqrt{-1}y) = 4\frac{\partial \varphi}{\partial z_i}(y) .  
\end{equation}
Using the skew symmetry and reality of the  pairing, we have:
$$\langle \omega, \bar\omega_i\rangle = \overline{\langle \bar\omega,  \omega_i\rangle} = -\overline{\langle \omega_i, \bar\omega\rangle}=- 4\frac{\partial \varphi}{\partial z_i}(y).$$
Now let us choose $a_i$ such that, with $\psi_i = a_i\omega + \omega_i$, we have $\langle \psi_i, \bar\omega \rangle = 0$, i.e.\ $\psi_i\in V^{2,1}$. Writing this as
$0 = \langle \psi_i, \bar\omega \rangle = a_i\langle \omega, \bar\omega \rangle + \langle \omega_i, \bar{\omega}  \rangle$,
we see that
$$a_i = -\frac{\langle \omega_i, \bar{\omega}  \rangle}{\langle \omega, \bar\omega \rangle}.$$
We now work out the signature of the intersection matrix $(\langle \psi_i, \bar\psi_j \rangle)$. We shall use the shorthand $\D \varphi_i = \frac{\partial \varphi}{\partial z_i}$ and $\D\varphi_{ij} = \frac{\partial ^2\varphi}{\partial z_i \partial z_j}$. Then a calculation gives
\begin{equation}
 \langle \psi_i, \bar\psi_j \rangle = \frac{2\sqrt{-1}}{\varphi(y)}(-\varphi_i(y)\varphi_j(y) + \varphi(y)\varphi_{ij}(y)) .  
\end{equation}
We could also write this as
$$\langle \psi_i, \bar\psi_j \rangle = 2\sqrt{-1} \varphi(y)\frac{\partial ^2}{\partial z_i\partial z_j}\log \varphi (y).$$
Now, we want the Hermitian matrix $\D\sqrt{-1}(\langle \psi_i, \bar\psi_j \rangle) =-\frac{2}{\varphi(y)}(-\varphi_i(y)\varphi_j(y) + \varphi_{ij}(y))$ to be negative definite, and hence, since we have already assume $\varphi(y) < 0$, we want the real symmetric matrix $(-\varphi_i(y)\varphi_j(y) + \varphi(y)\varphi_{ij}(y))$ to be negative definite.

Let $B(x,y) = \sum_{i=1}^hx_iy_i$ be the standard inner product on $\R^h$ and let $\Upsilon$ and $\Upsilon_j$ be the   vectors defined by
$$\Upsilon(y) = (\varphi_1(y), \dots, \varphi_h(y)); \qquad \Upsilon_j(y) = (\varphi_{1j}(y), \dots, \varphi_{hj}(y)).$$
Then, by Euler's theorem, we have
$$B(y, \Upsilon(y))  = 3\varphi(y);\qquad 
B(y, \Upsilon_j(y))  = 2\varphi_j(y).
$$
In particular, we see that $y$ is not orthogonal to $\Upsilon(y)$ with respect to $B$, and hence that every vector in $\R^n$ can be uniquely written as $ty+ v$ for some $t\in \R$, where $v\in \Upsilon^\perp$ (the perpendicular space for the standard inner product).
Also, for a vector $(v_1, \dots, v_h) \in \R^h$,
$${}^tv(\varphi_i(y)\varphi_j(y))v =  (B(v, \Upsilon(y)) )^2.$$
Hence, if $v\in \Upsilon^\perp$, then 
$${}^tv(-\varphi_i(y)\varphi_j(y) + \varphi(y)\varphi_{ij}(y))v = {}^tv(\varphi(y)\varphi_{ij}(y))v.$$
Now  ${}^ty(\varphi(y)\varphi_{ij})y = 6\varphi^2(y) >0$, and 
\begin{gather*}
{}^ty(-\varphi_i(y)\varphi_j(y) + \varphi(y)\varphi_{ij}(y))y = - (B(y, \Upsilon(y)) )^2+ \varphi(y)  B(y, 2\Upsilon(y))  \\
= -9\varphi^2(y) + 6\varphi^2(y)= -3\varphi^2(y) < 0.
\end{gather*}

 Note that, if $v\in \Upsilon^\perp$, ${}^tv(\varphi_{ij})y = 2B(v, \Upsilon(y)) =0$, and likewise ${}^tv(\varphi_i\varphi_j )y =  3\varphi(y)B(v, \Upsilon(y)) =0$. Hence $\Upsilon^\perp$ is contained in the  orthogonal space to $y$ corresponding to the quadratic form $(\varphi_{ij})$ (or to $(\varphi(y)\varphi_{ij})$) and to $(-\varphi_i(y)\varphi_j(y))$. Thus, applying the form $(-\varphi_i(y)\varphi_j(y) + \varphi(y)\varphi_{ij}(y))$ to a vector of the form $ty+ v$ with $t\in \R$ and $v\in \Upsilon^\perp$ gives
$${}^t(ty+ v)(-\varphi_i(y)\varphi_j(y) + \varphi(y)\varphi_{ij}(y))(ty+v) = -3t^2\varphi^2(y) + {}^tv(\varphi(y)\varphi_{ij}(y))v.$$ 
Now, if the signature of the matrix $(\varphi(y)\varphi_{ij}(y))$ is $(1, h-1)$, then $(\varphi(y)\varphi_{ij}(y))$ is negative definite on  $\Upsilon^\perp$ and hence 
$${}^t(ty+ v)(-\varphi_i(y)\varphi_j(y) + \varphi(y)\varphi_{ij}(y))(ty+v)\leq 0,$$
 with equality $\iff$ $t=v=0$. Conversely, if $(-\varphi_i(y)\varphi_j(y) + \varphi(y)\varphi_{ij}(y))$ is negative definite, then since the induced quadratic form agrees with that for $(\varphi(y)\varphi_{ij}(y))$ on $\Upsilon^\perp$, $(\varphi(y)\varphi_{ij}(y))$ has at least $h-1$ negative eigenvalues, and since it has at least one positive eigenvalue corresponding to $y$, its signature is $(1, h-1)$.
\end{proof}

\begin{remark} Let $\overline{Y}$ be the subvariety of $\Pee^{2h+1}$ defined by (the homogenization of) Equation~\eqref{eqalld2} and let $Z$ be the open subset of $\overline{Y}$ defined by the Hodge--Riemann inequalities. Clearly, in case $\varphi =0$, $Z=\emptyset$. In all other cases it is nonempty: in fact, if $L$ is a line through the origin in $\Cee^h$ defined over $\R$ and such that $\varphi|L \neq 0$, and $f\colon L \to \overline{Y}$ is the natural morphism, then $f^{-1}(Z)$ is an upper half plane $\fH$ embedded in its compact dual, which is isomorphic to $\bP^1$. 
\end{remark}

\section{The complex case}
\label{sectequations2}
The description of Equation~\eqref{eqalld2} does not suffice to handle the case where $\varphi =0$ (the unit ball case, see Remark \ref{remmaxvar}). Also, in certain situations we would like to relax the non-degeneracy condition that the derivatives $\partial \omega/\partial z_i$ span all of $F^2/F^3$.  To describe the relevant horizontal subvarieties which we shall encounter, we fix the following notation: 
\begin{notation}
Let $\Cee^{2h+2}$ have a complex basis $\varepsilon_0, \dots, \varepsilon_h, \delta_0, \dots, \delta_h$ and  a real structure such that $\bar{\varepsilon}_i =\delta_i$. Let $\langle  \cdot, \cdot \rangle$ be the unique  symplectic form  on $\Cee^{2h+2}$ such that $\langle  \varepsilon_i, \varepsilon_j \rangle = \langle  \delta_i, \delta_j \rangle =0$ for all $i$ and $j$ and $\langle  \varepsilon_0, \delta_0 \rangle =2\sqrt{-1}$, $\langle  \varepsilon_i, \delta_i \rangle =-2\sqrt{-1}$ for $1\leq i\leq a$, and $\langle  \varepsilon_i, \delta_i \rangle =2\sqrt{-1}$ for $a+1\leq i\leq h$ (where we allow the possibility that $a=h$ and $b=0$). Note that the $\varepsilon_i$ span a maximal  isotropic complex subspace of $\Cee^{2h+2}$, as do the $\delta_i$.
\end{notation}

More invariantly, we suppose that $V_+$ is an $(h+1)$-dimensional complex vector space with a non-degenerate Hermitian inner product $[\cdot, \cdot]$, which thus gives a complex anti-linear isomorphism $f$ from $V_+$ to $V_-$, where $V_-$ is the dual of $V_+$. The isomorphism $f$ then defines a real structure on $V_\bC:=V_+\oplus V_-$ (with $V_\bC=V\otimes_\bR\bC$); for example, if $v\in V_+$, $\operatorname{Re}v = \frac12(v+ f(v))$ and $\operatorname{Im}v = \frac1{2\sqrt{-1}}(v- f(v))= \operatorname{Re}(-\sqrt{-1}v)$. If $\varepsilon_0, \dots, \varepsilon_h$ is a diagonal basis for $V_+$ with respect to the form $[\cdot, \cdot]$, i.e.\ $[\varepsilon_i, \varepsilon_j]= (-1)^{a_i}\delta_{ij}$ with $a_i=0$ or $1$, and we set $\delta_i =\bar{\varepsilon}_i$, then $\delta_i =(-1)^{a_i}\varepsilon_i^*$, where $\varepsilon_0^*, \dots, \varepsilon_h^*$ is the dual basis for $V_-\cong V_+\spcheck$. On $V_\bC$, there is the natural symplectic form 
$$\langle  v_1+\xi_1, v_2 +\xi_2 \rangle_0 = \xi_2(v_1) - \xi_1(v_2).$$
Applying the form to a pair  of real vectors gives
$$\left\langle  \frac12(v+ f(v)), \frac12(w+ f(w)) \right\rangle_0 =  \frac14(f(w)(v) - f(v)(w)) = -\frac{\sqrt{-1}}{2}\operatorname{Im} [w,v].$$
Thus $\langle  \cdot, \cdot \rangle = 2\sqrt{-1} \langle  \cdot, \cdot \rangle_0$ is a real symplectic form on $V$, with the property that
$$\langle  \varepsilon_i, \delta_i \rangle =\langle  \varepsilon_i, (-1)^{a_i}\varepsilon_i^* \rangle = (-1)^{a_i}2\sqrt{-1}.$$
Conversely, with notation as at the beginning of this section, we define $V_+ =   \operatorname{span}\{\delta_0, \dots, \delta_h\}$ and $V_- =   \operatorname{span}\{\varepsilon_0, \dots, \varepsilon_h\}$ and use the symplectic pairing to define an identification of $V_-$ with the dual of $V_+$, and define a complex anti-linear isomorphism from $V_+$ to $V_-$ by taking the Hermitian form
$$[v, w] = \frac{1}{2\sqrt{-1}}\langle \bar{v}, w\rangle.$$

\subsection{The non-degenerate case} 
In this case, we let $a=h$ and $b=0$ and work out the analogue of the preceding section using the complex basis $\varepsilon_0, \dots, \varepsilon_h, \delta_0, \dots, \delta_h$. Writing
$$\omega(z) = \Psi(z)\varepsilon_0 +\sum_{i=1}^hA_i(z)\varepsilon_i + \sum _{i=1}^hz_i\delta_i + \delta_0,$$
the only difference between this picture and that of Equation~\eqref{eqalld} is the sign change between $\langle \varepsilon_0, \delta_0\rangle$ and $\langle \varepsilon_i, \delta_i\rangle$ for $i>0$, giving: there exists a function $\Phi$ such that 
\begin{equation}\label{eqallddegen1}
\omega(z) = \left(\Phi - \frac12\sum_{i=1}^hz_i\frac{\partial \Phi}{\partial z_i}\right)\varepsilon_0 -\frac12\sum_{i=1}^h\frac{\partial \Phi}{\partial z_i}\varepsilon_i + \sum _{i=1}^hz_i\delta_i + \delta_0.
\end{equation}
This  determines a filtration $F^\bullet$ of $\Cee^{2h+2}$, by taking 
$$F^3(z) = \Cee \cdot \omega(z)\subseteq F^2(z) = \operatorname{span}\left\{\omega(z), \frac{\partial \omega}{\partial z_1} , \dots ,  \frac{\partial \omega}{\partial z_h}\right\},$$
and then setting $F^1(z)  = (F^3(z))^\perp$ and $F^0 =V_\bC\cong \Cee^{2h+2}$. As in Section \ref{sectequations}, this describes the germ $(Y,0)$ of a horizontal subvariety of $\check \bD$. Also, as in \S\ref{globalconst}, if $\Phi$ are polynomials on $\bA^h$, one obtains a horizontal subvariety $Y\cong \bA^h$ of $\check \bD$.

\begin{remark}
Note that, in case $\Phi$ is a homogeneous cubic polynomial, then $\Psi = -\frac12\Phi$ as before. Normalizing again to eliminate the factor of $1/2$ then gives
\begin{equation}\label{eqallddegen2}
\omega(z) = -\Phi\varepsilon_0 -\sum_{i=1}^h\frac{\partial \Phi}{\partial z_i}\varepsilon_i + \sum _{i=1}^hz_i\delta_i + \delta_0.
\end{equation}
If $\Phi$ in Equation~\ref{eqallddegen2} is $0$, we are in the case of the unit ball, as we shall see below. If $\Phi$ is a homogeneous linear or quadratic polynomial, the complex variation of Hodge structure defined by Equation~\ref{eqallddegen2} reduces to the case of the unit ball via an appropriate symplectic transformation.

For a general choice of the cubic $\Phi$, it follows from Theorem~\ref{thmstabilizersmooth} that the Zariski closure $\hat{Z}=\overline Y$ of a horizontal subvariety $Y$ cannot be simultaneously expressed in the form Equation~\eqref{eqallddegen2} and the form Equation~\eqref{eqalld} for two different choices of bases $\varepsilon_0, \dots, \varepsilon_h, \delta_0, \dots, \delta_h$ and $e_0, \dots, e_h, f_0, \dots, f_h$. However, as we shall see in the next section, this is always possible in the Hermitian symmetric tube domain case.
\end{remark}

If $\Phi$ and $\partial \Phi/\partial z_i$ both vanish at the origin, then the Hodge structure at $0$ is  determined by $F^3 =\Cee\cdot\delta_0\subseteq F^2 =  \operatorname{span}\{\delta_0, \delta_1, \dots, \delta_h\}=V_+$, and hence $V^{3,0}(0) = \Cee\cdot\delta_0$, $V^{2,1}(0)=  \operatorname{span}\{\delta_1, \dots, \delta_h\}$, $V^{1,2}(0)=  \operatorname{span}\{\varepsilon_1, \dots, \varepsilon_h\}$, and $V^{0,3}(0) = \Cee \cdot \varepsilon_0$. Thus the filtration automatically  satisfies the Hodge--Riemann inequalities, which was the reason for our choice of signs. In particular, the open subset $Z$ of $\hat{Z}$ is always nonempty in this case. We will not write out the Hodge--Riemann inequalities in general, although this is straightforward to do, except to note that it is easy to check directly that, in case $\Phi$ is identically $0$, the inequalities amount to:
$$\sum_{i=1}^h |z_i|^2 < 1,$$
i.e.\ the horizontal subvariety in question is the $h$-dimensional complex unit ball. 

\subsection{A degenerate case} 
Here we are  interested in the case $a< h$ and hence $b>0$. For $z = (z_1, \dots, z_a)$, we define a holomorphically varying line in $\Pee^{2h+1}$, the analogue in some sense of Equation~\eqref{eqalld}, via
\begin{equation}\label{eqallddegen3}
\omega(z) = \sum_{k=a+1}^hq_k(z)\delta_k + \sum_{i=1}^az_i\delta_i + \delta_0. 
\end{equation}
Taking derivatives, we have, for $1\leq i\leq a$,
$$\omega_i(z) = \frac{\partial \omega}{\partial z_i} = \sum_{k=a+1}^h \frac{\partial q_k}{\partial z_i}\delta_k + \delta_i.$$
To complete this to a Hodge filtration, we define
$$F^2 = \operatorname{span}\{\omega(z), \omega_1(z), \dots, \omega_a(z)\} \oplus \operatorname{span}\{\omega(z), \omega_1(z), \dots, \omega_a(z)\}^\perp,$$
where $\operatorname{span}\{\omega(z), \omega_1(z), \dots, \omega_a(z)\}^\perp$ denotes the orthogonal space in $W\spcheck$. Explicitly, for $a+1\leq k\leq h$,  let
$$\omega_k(z) = \varepsilon_k + \sum _{i=1}^a\frac{\partial q_k}{\partial z_i}\varepsilon_i + s_k\varepsilon_0,$$
where we set
$$s_k = \sum_{i=1}^az_i \frac{\partial q_k}{\partial z_i}-q_k.$$ 
As before, holomorphic $q_k$ describe the germ of a horizontal subvariety $(Y,0)$, and polynomial $q_k$ correspond to $Y$ being affine. Note in particular that if $q_k$ is a homogeneous quadratic polynomial for all $k$, then $s_k = q_k$.

Then $\omega_{a+1}(z), \dots, \omega_h(z)$ is a basis for $\operatorname{span}\{\omega(z), \omega_1(z), \dots, \omega_a(z)\}^\perp$.
It follows that $\omega(z), \omega_1(z), \dots , \omega_h(z)$  span an isotropic subspace $F^2$ of $V_\bC\cong \Cee^{2h+2}$. Setting $F^1 = (\omega)^\perp$ (under the symplectic form $\langle  \cdot, \cdot \rangle$) then defines a weight three complex variation of Hodge structure $F^3 \subseteq F^2\subseteq F^1 \subseteq F^0 =V_\bC$ of CY type.

\begin{remark}\label{extremark} In case $b> 0$, the above variation of Hodge structure is never maximal, and in fact can be  embedded in an $h$-dimensional complex variation of Hodge structure given by a slight variant of Equation~\eqref{eqallddegen1}. The simplest such choice is as follows: for $z= (z_1, \dots, z_h)$, setting
\begin{align*}
\Phi_0(z) &= -2\sum_{k=a+1}^hz_kq_k (z_1, \dots, z_a);\\
\Psi_0(z) &= \Phi_0(z) - \frac12\sum_{i=1}^hz_i\frac{\partial \Phi_0}{\partial z_i};\\
A_i(z) &= \frac12\frac{\partial \Phi_0}{\partial z_i} = \begin{cases} -\sum_{k=a+1}^hz_k\frac{\partial q_k}{\partial z_i}, &\text{if $i\leq a$;}\\
-q_i, &\text{if $i\geq a+1$,}
\end{cases}
\end{align*}
and   defining
\begin{equation}\label{eqdegenext}
\omega(z) = \Psi_0(z)\varepsilon_0 - \sum_{i=1}^aA_i(z)\varepsilon_i - \sum_{k=a+1}^h A_k(z)\delta_k + \sum_{i=1}^az_i\delta_i + \sum_{k=a+1}^h  z_k\varepsilon_k + \delta_0,
\end{equation}
then we get a variation of Hodge structure of the form of Equation~\eqref{eqallddegen1}  which specializes to Equation~\eqref{eqallddegen3} when $z_{a+1} =\dots = z_h =0$. Here, we are free to replace $\Phi_0$ by $\Phi = \Phi_0 + \Phi_1$, where $\Phi_1$ is any holomorphic function in $z_1, \dots, z_h$ which vanishes to order at least two in $z_{a+1}, \dots, z_h$. 

If the $q_k$ are all homogeneous quadratic polynomials (as will be the case in our application), then $\Phi_0$ is a homogeneous cubic polynomial and $\Psi_0 = - \frac12 \Phi_0$. Given the direct sum decomposition $\Cee^h = \Cee^a \oplus \Cee^b$, where we write $z = (z_1, \dots, z_h) = (z', z'')$, say, then the $q_k$ are invariantly given as a linear map $Q \colon (\Cee^a) \otimes \Cee^a) \to (\Cee^b)\spcheck$, where $(\Cee^b)\spcheck$ is the dual vector space to $\Cee^b$, and (up to the factor $-2$) $\Phi_0(z) = \langle z'', Q(z'\otimes z')\rangle$ using the pairing $\langle \cdot, \cdot \rangle$ on $\Cee^b\otimes (\Cee^b)\spcheck$.
In this case, if we modify $\Phi_0$ by adding a term $\Phi_1$ as above, it is natural to require that $\Phi_1$ is also a homogeneous cubic polynomial vanishing to order at least two in $z_{a+1}, \dots, z_h$. Then $\Phi = \Phi_0 + \Phi_1$ is a homogeneous cubic polynomial whose zero locus contains the linear space $z_{a+1}= \dots= z_h=0$.

If $\D q_k(0) =\frac{\partial q_k}{\partial z_i} =0$ for all $i$ and $k$, then the Hodge filtration at $0$ is determined by $F^3= V^{3,0}(0) =\Cee\cdot\delta_0\subseteq  F^2= \operatorname{span}\{\delta_0, \delta_1, \dots, \delta_a, \varepsilon_{a+1}, \dots, \varepsilon_h\}$. Again, by our choice of signs, this satisfies the Hodge--Riemann inequalities.  Note that the Hodge--Riemann inequalities for the degenerate case given by Equation~\eqref{eqallddegen2} follow from those for the larger family given by Equation~\eqref{eqdegenext}. 
\end{remark}

\section{The weight three Hermitian symmetric case}\label{sect5}
Our goal now is to show that the Hermitian symmetric weight three Calabi--Yau examples can be described as in the previous two sections, and to discuss rationality issues. We begin by recalling some standard facts about Hermitian symmetric spaces (see also \S\ref{proofclassify}). If $G$ is a simple real algebraic group  with maximal compact subgroup $K$ such that $\mathcal{D}= G(\R)/K$ is a Hermitian symmetric space, then the real Lie algebra $\mathfrak{g} =\mathfrak{k}\oplus \mathfrak{p}$ decomposes into the $+1$ and $-1$ eigenspaces of the Cartan involution. Moreover, the complexification $\mathfrak{p}_\Cee$ of $\mathfrak{p}$ has a direct sum decomposition $\mathfrak{p}_\Cee= \mathfrak{p}_+ \oplus \mathfrak{p}_-$, where  each subspace $\mathfrak{p}_\pm$ is an abelian Lie algebra corresponding to a unipotent subgroup $P_\pm$ of $G(\Cee)$. Then $\mathfrak{k}_\Cee\oplus \mathfrak{p}_+$ is the Lie algebra of a parabolic subgroup $P_0(\Cee) = K(\Cee)P_+$ defined over $\Cee$.  
Note that $G(\Cee)/P_0(\Cee)$ is the compact dual of $\mathcal{D}$, in the sense that $P_0(\Cee)\cap G(\R) = K$ and thus there is an inclusion $\calD = G(\R)/K \subseteq \check\calD = G(\Cee)/P_0(\Cee)$.
By the Borel and Harish-Chandra embedding theorem, $G(\R)\subseteq P_-K(\Cee) P_+= P_-P_0(\Cee)$ and $\mathcal{D}$ is embedded in the open subset $P_-P_0(\Cee)/P_0(\Cee)$ of $\check\calD$. Clearly, $P_+$ is the unipotent radical of $P_0(\Cee)$, $K(\Cee)$ is the Levi subgroup,  and   $K(\Cee) P_-$ is the opposite parabolic, with unipotent radical $P_-$. 

In terms of Hodge structures associated to a representation of a reductive form of $G$, having chosen a reference Hodge structure $F^\bullet_0$, $K$ is the stabilizer of  $F^\bullet_0$ in $G(\bR)$ and $P_0(\bC)$ is the stabilizer of $F^\bullet_0$ in $G(\bC)$.
 Hence every Hodge structure is in the $P_-$-orbit of $F^\bullet_0$.

\begin{lemma} The parabolic subgroup $P_0(\Cee)$ is conjugate in $G(\Cee)$ to a parabolic subgroup $P(\Cee)$, where $P$ is any maximal real parabolic subgroup corresponding to a zero-dimensional boundary component.
\end{lemma}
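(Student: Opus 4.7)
The plan is to realize both $P_0(\mathbb{C})$ and $P(\mathbb{C})$ as $G(\mathbb{C})$-stabilizers of two points of the compact dual $\check{\mathcal{D}}$, and then to invoke the transitivity of $G(\mathbb{C})$ on $\check{\mathcal{D}} = G(\mathbb{C})/P_0(\mathbb{C})$. By construction $P_0(\mathbb{C})$ is the stabilizer of the base point $x_0 \in \check{\mathcal{D}}$. I would realize $P(\mathbb{C})$ as the $G(\mathbb{C})$-stabilizer of a zero-dimensional boundary component of $\mathcal{D}$, regarded as a point $x$ of $\check{\mathcal{D}}$.

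First I would recall that, via the Harish-Chandra embedding $\mathcal{D} \hookrightarrow \mathfrak{p}_+ \subset \check{\mathcal{D}}$, the topological boundary $\partial \mathcal{D}$ sits inside $\check{\mathcal{D}}$ and is stratified into holomorphic arc components; the zero-dimensional ones are single points $x \in \check{\mathcal{D}}$, and by definition the corresponding maximal real parabolic $P$ satisfies $P(\mathbb{R}) = \mathrm{Stab}_{G(\mathbb{R})}(x)$. (Implicitly, the existence of such an $x$ requires $\mathcal{D}$ to be of tube type; once this is granted, all zero-dimensional boundary components form a single $G(\mathbb{R})$-orbit, so the phrase ``any'' $P$ in the statement is justified.)

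The key step is to identify $P(\mathbb{C})$ with $Q := \mathrm{Stab}_{G(\mathbb{C})}(x)$. The inclusion $P(\mathbb{C}) \subseteq Q$ is automatic, since $P(\mathbb{R}) \subseteq Q$ and $Q$ is Zariski closed. To upgrade this to an equality, I would observe that $P(\mathbb{C})$ is itself a maximal proper parabolic of $G(\mathbb{C})$, of the same (cominuscule) type as $P_0(\mathbb{C})$: this can be read off from the Cayley transform realization, which exhibits an explicit element $c \in G(\mathbb{C})$ such that $cP_0(\mathbb{C})c^{-1}$ is defined over $\mathbb{R}$ with $\mathbb{R}$-points $P(\mathbb{R})$. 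Granting $Q = P(\mathbb{C})$, transitivity of $G(\mathbb{C})$ on $\check{\mathcal{D}}$ produces $g \in G(\mathbb{C})$ with $g \cdot x_0 = x$, yielding $gP_0(\mathbb{C})g^{-1} = Q = P(\mathbb{C})$, as required.

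The hardest part is the identification $Q = P(\mathbb{C})$; equivalently, the assertion that the complexification of the real parabolic $P$ is already maximal among proper parabolics of $G(\mathbb{C})$ of the correct type. The cleanest route, as indicated above, is to exhibit the Cayley transform concretely and verify both that it sends $x_0$ to $x$ and that the conjugated parabolic descends to $P$ over $\mathbb{R}$. This simultaneously produces the conjugating element, pins down the parabolic type, and avoids any appeal to the finer Satake/Bruhat classification of real parabolics.
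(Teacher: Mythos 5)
You have in essence reproduced the paper's argument: the whole content of the lemma is carried by the Cayley transform, and the paper's proof consists precisely of citing Kor\'anyi--Wolf \cite{korwolf} (Theorem 5.9) --- the statement that $(\operatorname{Ad}(c))^{-1}P_0(\Cee)\cap G(\R)$ is a real parabolic subgroup of $G$, with $c=c_r$ the Cayley transform --- together with \cite{wolfkor} (Corollary 6.9) identifying this parabolic as the one attached to a zero-dimensional boundary component. That citation is exactly your declared ``hardest part,'' and you correctly isolate the genuine subtlety there: a maximal real parabolic need not complexify to a maximal complex parabolic, so the inclusion $P(\Cee)\subseteq Q=\Stab_{G(\Cee)}(x)$ does not by itself force equality. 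As you yourself note, once the Cayley computation is in hand the stabilizer scaffolding is redundant: $cP_0(\Cee)c^{-1}$ is defined over $\R$ with real points $P(\R)$, hence equals $P(\Cee)$ by Zariski density of $P(\R)$ in $P$, and that is already the asserted conjugacy.

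One parenthetical, however, is wrong and should be corrected: the existence of a zero-dimensional boundary component does \emph{not} require $\calD$ to be of tube type. Every Hermitian symmetric domain of non-compact type has rank-zero (point) boundary components --- for the unit ball $\SU(1,n)/\U(n)$ every point of the boundary sphere is one --- and $G(\R)$ acts transitively on them (they constitute the Shilov boundary). Tube type governs whether the Cayley-transformed unbounded realization is a Siegel domain of the first rather than the second kind, not whether point components exist. The slip is not harmless in context: the lemma is stated and used for arbitrary $\calD$, including the non-tube cases --- indeed the final remark of Section~\ref{sectconclude} explicitly discusses the maximal real parabolic corresponding to a zero-dimensional boundary component of $\SU(1,n)$ --- so your hedge would wrongly leave the statement vacuous exactly where the paper needs it.
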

\begin{proof} By a theorem of Kor\'anyi--Wolf \cite[Theorem  5.9]{korwolf} (and the remark at the end of Section 5 in op. cit.), the subgroup $P=(\operatorname{Ad}(c))^{-1}P_0(\Cee)\cap G(\R)$ is a real parabolic subgroup of $G$, where $c= c_r$ is the Cayley transform (which is all that we shall need in this section). The more precise identification of $P$ as the maximal parabolic subgroup corresponding to a zero-dimensional boundary component is given in \cite[Corollary  6.9]{wolfkor}.
\end{proof}

Let  $P$ be a  maximal real parabolic subgroup corresponding to a $0$-dimensional boundary component of $\mathcal{D}$. Then  its unipotent radical $U$ satisfies: the complexification $U(\Cee)$ is conjugate in $G(\Cee)$ to $P_-$. Note that, if $k$ is a subfield of $\R$ such that $G$ and $P$ are defined over $k$, then $U$ is also defined over $k$.

\begin{remark}\label{uniremark} Suppose that $T \in G(\Zee)$ is a monodromy matrix corresponding to a holomorphic map $\Delta^* \to \calD/G(\Zee)$. Then $T$ is conjugate in $G(\R)$ to an element of $U(F)$, where $U(F)$ is the center of the unipotent radical of the real parabolic subgroup corresponding to an appropriate rational boundary component $F$ (see for example  \cite[Theorem, p.\ 279]{amrt} or \cite[Satz 12]{schmid2}). One can show (cf.\ \cite[Theorem 3, p.\ 240]{amrt}) that, for such a $T$, the element $T$ is conjugate in $G(\R)$ to an element of $U$, the unipotent radical of a  maximal real parabolic subgroup corresponding to a $0$-dimensional boundary component, and hence in $G(\Cee)$ to an element of $P_-$.
\end{remark}

Finally, we shall use the following:

\begin{lemma}\label{wtthreelemma} Let $\iota \colon \check\bD \to \Pee^{2h+1}$ be the morphism defined by taking the complex line $F^3$ and let $\calD$ be a Hermitian subvariety of $\bD$.  Then $\iota$ induces an embedding $\calD \to \Pee^{2h+1}$.
\end{lemma}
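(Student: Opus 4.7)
The plan is to identify $\iota|_\calD$ as the restriction of the minimal $G(\Cee)$-equivariant projective embedding of the compact dual $\check\calD$, using the classification of Section~\ref{sectclassify}. First, by the classification (Theorem~\ref{thmclassify}, Corollary~\ref{thmclassify3}), the ambient representation $V$ (or, in the complex case, the summand $V_+$ of $V_\Cee$) is a cominuscule representation $V_{\varpi_i}$ for $G$, or a symmetric power thereof, possibly twisted; and in every case the line $F_0^3 = V^{3,0}(0)$ at a reference Hodge structure is spanned by a highest weight vector $v_\lambda$ with $\lambda$ a positive multiple of the cominuscule weight $\varpi_i$. Consequently, the $G(\Cee)$-stabilizer of the line $[v_\lambda]\in\bP V$ is precisely the maximal parabolic $P_0(\Cee)$ recalled in Remark~\ref{remminuscule}, which is also the stabilizer of the filtration $F_0^\bullet$.

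Next, since both $\iota$ and the embedding $\calD\subset \bD$ are equivariant, the restriction $\iota|_{\check\calD}$ is the $G(\Cee)$-orbit map $g\cdot F_0^\bullet\mapsto g\cdot F_0^3$. By the first step this factors as
\[
\check\calD = G(\Cee)/P_0(\Cee)\longrightarrow \bP V,
\]
and is therefore injective. To see that it is an immersion at $F_0^\bullet$, I would use the standard identification $T_{F_0^\bullet}\check\calD\cong \fp_-$ (notation of Section~\ref{sect5}) and note that, under $\iota_*$, an element $X\in\fp_-$ is sent to the class in $\Hom(F_0^3,V/F_0^3)$ represented by $v\mapsto X\cdot v$. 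This class vanishes iff $X\cdot v_\lambda=0$, iff $X\in\Lie(P_0(\Cee))\cap\fp_-=0$, since $\Lie(P_0(\Cee))=\fk_\Cee\oplus\fp_+$ is transverse to $\fp_-$. By equivariance the differential is injective everywhere.

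Finally, since $\check\calD$ is projective, an injective holomorphic immersion $\check\calD\to\bP V$ is automatically a closed embedding of projective varieties; this is precisely the minimal homogeneous embedding by the cominuscule representation (Remark~\ref{remminuscule}). Restricting to the open subset $\calD\subset\check\calD$ yields the desired holomorphic embedding $\calD\hookrightarrow\bP^{2h+1}$.

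The main subtlety lies in checking that the three hypotheses on $F_0^3$ really do hold uniformly across the classification. For the primitive tube-type cases this is immediate from Gross's construction; for the non-tube complex cases one must verify that, after the half-twist producing CY type, $V^{3,0}$ still coincides with the highest weight line of $V_+$ (which it does, since the twist only shifts weights uniformly without changing which weight space is extremal); for the symmetric power cases the highest weight line is spanned by $v_\lambda^n$ and has the same stabilizer $P_0(\Cee)$; and for reducible products $\calD=\calD_1\times\calD_2$ the line $F_0^3$ is a tensor product of highest weight lines, so $\iota|_{\check\calD}$ is the composition of the product of minimal embeddings with a Segre embedding, again a closed embedding.
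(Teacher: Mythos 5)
Your proof is correct, but it takes a genuinely different and stronger route than the paper's. The paper proves only injectivity on $\calD$ itself, purely at the level of real groups: given $o_1, o_2\in \calD$ with $\iota(o_1)=\iota(o_2)=x_1$, transitivity of $G(\R)$ on $\calD$ gives $g\in G(\R)$ with $go_1=o_2$; since $P_0(\Cee)$ is the stabilizer of the point $x_1\in\Pee^{2h+1}$ (the same highest-weight-line fact you establish, which the paper cites via Remark~\ref{remminuscule} with the verification left as ``one can easily check''), it follows that $g\in G(\R)\cap P_0=K=\Stab(o_1)$, hence $o_1=o_2$; the immersion property is not checked separately there (implicitly, equivariance makes the differential of constant rank, so injectivity suffices). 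You instead prove the stronger statement that $\iota$ restricted to the entire compact dual $\check\calD=G(\Cee)/P_0(\Cee)$ is a closed embedding, identified with the minimal homogeneous cominuscule embedding of Remark~\ref{remminuscule}, and then restrict to the open subset $\calD$. What your route buys is the global identification of $\iota(\check\calD)$, an explicit verification of the differential via $\fp_-$, and a careful case-check (half-twists, symmetric powers, Segre for reducible products) that supplies the substance behind the paper's ``easily checked'' claim; what the paper's route buys is brevity and independence from the classification details of Section~\ref{sectclassify}. One small inaccuracy in your immersion step: the class of $X\in\fp_-$ in $\Hom(F_0^3, V/F_0^3)$ vanishes iff $X\cdot v_\lambda\in \Cee\cdot v_\lambda$, not iff $X\cdot v_\lambda=0$; but since $\fp_-$ strictly lowers the $H_0$-weight and $\lambda(H_0)$ is the extremal weight, the former forces the latter. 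Likewise the annihilator of the vector $v_\lambda$ is merely contained in $\Lie P_0(\Cee)$ rather than equal to it, but containment is all you use, since $\Lie P_0(\Cee)\cap \fp_- = 0$. With these one-line fixes the argument stands as written.
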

\begin{proof}
Given $o_1 ,  o_2\in \calD$, with $x_1$, $x_2$   the corresponding points of $\Pee^{2h+1}$, suppose that $x_1 =x_2$. View the maximal compact subgroup $K$ as the stabilizer of $o_1$. Then $P_0$ is the stabilizer of the point $x_1\in \Pee^{2h+1}$ (as one can easily check; see also Remark~
\ref{remminuscule}). There exists a $g\in G(\R)$ such that $go_1 = o_2$, and hence $g\cdot x_1 = x_2 =x_1$. It follows that $g\in G(\R)\cap P_0 = K$, and hence $o_1= o_2$. 
\end{proof}

\subsection{Tube domain case}\label{ssectube}
In the tube domain case, we can give a very explicit description of the complex Lie algebra $\mathfrak{g}_\Cee$ and of the minuscule representation $V_\bC$ of $G(\Cee)$ corresponding to the weight three variation of Hodge structure (see Lemma \ref{mainlemma} and Corollary \ref{thmclassify3}). This discussion is closely related to some of the work of Landsberg and Manivel (esp. \cite{lm1} and \cite{lm4}). 

In the tube domain case, $\mathfrak{k}_\Cee \cong  \mathfrak{g}'_\bC\oplus \mathfrak{z}$, where $\mathfrak{g}_\bC'$ is a semisimple (complex) Lie algebra, $\mathfrak{z}$ is the one-dimensional center of  $\mathfrak{k}_\Cee$, and, as a  $\mathfrak{k}_\Cee$-module, $\mathfrak{g}_\Cee \cong \mathfrak{k}_\Cee \oplus W \oplus W\spcheck$, where $W$ is an irreducible representation of $\mathfrak{k}_\Cee$,   $W\spcheck$ is its dual, and  $W= \mathfrak{p}_-$, $W\spcheck = \mathfrak{p}_+$ in the notation above. There is also a $\mathfrak{g}_\bC'$-invariant linear map $B\colon \operatorname{Sym}^2 W \to W\spcheck$ such that the associated cubic tensor $C(w_1, w_2, w_3) = \langle B(w_1, w_2), w_3\rangle$ is   nonzero, $\mathfrak{g}_\bC'$-invariant, and symmetric, where $\langle \cdot, \cdot  \rangle$ is the evaluation pairing $W\otimes W\spcheck \to \Cee$.  The minuscule representation $V_\bC$ of $\mathfrak{g}_\Cee$ splits as a $\mathfrak{k}_\Cee$-module as
\begin{equation}\label{decomposek}
V_\bC \cong \underline{\Cee} \oplus W \oplus W\spcheck \oplus \underline{\Cee},
\end{equation}
where $\mathfrak{g}_\bC'$ acts in the standard way on $W$ and $W\spcheck$ and trivially on the two factors $\underline{\Cee}$, and the center $\mathfrak{z}=\bC\cdot H_0$ (in the notation of \S\ref{proofclassify}) acts diagonally with weights $-3/2,  -1/2, 1/2, 3/2$ respectively (compare to \S\ref{listrep}). We write an element of $V_\bC$ as a $4$-tuple $\mathbf{x}=(t, v, \xi, s)$, where $t,s\in \Cee$, $v\in W$, and $\xi \in W\spcheck$. Note that $V_\bC$ has a natural real structure, and complex conjugation exchanges $W$ and $W\spcheck$ and the two factors $\underline{\Cee}$. In terms of Hodge structures, with $K$ the stabilizer of a reference Hodge structure, this says that the first factor $\underline{\Cee}=V^{3,0}$, $W=V^{2,1}$, $W\spcheck=V^{1,2}$, and the last factor $\underline{\Cee}=V^{0,3}$. An interesting example here is the $E_7$ case: $V$ is the unique minuscule $E_7$ representation, the semi-simple part of $K$ is $E_6$, and $W$ and $W\spcheck$ are the two minuscule representations of $E_6$.

We are interested in the action of the space $W$ on $V$. We will normalize the symplectic form on $V$ (which is unique up to a scalar) to be given by
$$\left\langle (t_1, v_1, \xi_1, s_1), (t_2, v_2, \xi_2, s_2)\right\rangle = s_1t_2 - s_2t_1 + \langle \xi_1, v_2\rangle - \langle \xi_2, v_1\rangle .$$
The $W$-action is given by $w\in W \mapsto N_w$, where
$$N_w (t, v, \xi, s) = (0, tw,  B(w,v), -\langle \xi, w\rangle).$$
Here the fact that $[N_{w_1}, N_{w_2}] = 0$ and that $N_w$ preserves the symplectic form, i.e.\ that $\langle N_w\mathbf{x}_1, \mathbf{x}_2 \rangle = -\langle \mathbf{x}_1, N_w\mathbf{x}_2 \rangle = \langle N_w\mathbf{x}_2, \mathbf{x}_1 \rangle$ follow from the symmetry of $C(w_1, w_2, w_3) = \langle B(w_1, w_2), w_3\rangle$. Note that
\begin{align*}
N_w^2(t, v, \xi, s)) &= (0,0,  tB(w,w), -\langle B(w,v),w\rangle), \\
  N_w^3(t, v, \xi, s)) &= (0,0,0, -t\langle B(w,w),w\rangle),
  \end{align*}
 and hence $(\exp N_w)(t, v, \xi, s)$ is given by
$$ \left(t, v+tw, \xi + B(w,v) + \frac{t}{2}B(w,w), s-\langle \xi, w\rangle -\frac12\langle B(w,v), w\rangle - \frac{t}{6}\langle B(w,w), w\rangle\right).$$
In particular,
$$(\exp N_w)(1,0,0,0) = \left(1, w, \frac{1}{2}B(w,w), - \frac{1}{6} C(w,w , w)\right).$$
This is in the form of Equation~\eqref{eqallddegen2} with $\Phi(z)  = - \frac{1}{6} C(w,w , w)$, for an appropriate  complex basis $\{\varepsilon_0, \varepsilon_1, \dots, \varepsilon_h, \delta_1, \dots, \delta_h, \delta_0\}$ and the cubic form $\varphi$ is only defined over $\Cee$. (Note that in Equation~\eqref{eqallddegen2} there is a minus sign in front of the factor $\frac12B(w,w)$ since, up to the common factor of $\sqrt{-1}$,  $\varepsilon_1, \dots, \varepsilon_h$ are the negatives of the dual basis for $\delta_1, \dots, \delta_h$.)  We are more interested in the variant corresponding to Equation~\eqref{eqalld2} where everything is defined over $\R$ or over $\Q$, as follows:

\begin{theorem}\label{hermcubiccase} Let $k$ be a subfield of $\R$. Suppose that $G$ is defined over $k$ and that  the maximal real parabolic subgroup $P$ is also defined over $k$, so that its unipotent radical $U$ is defined over $k$ as well. Then there exists a symplectic $k$-basis $e_0, e_1, \dots, e_h, f_1, \dots, f_h, f_0$ of $V$  and a cubic polynomial $\varphi(z_1, \dots, z_h)$ defined over $k$, such that $\mathcal{D}$ is biholomorphic to the locally closed  subset of $\Pee^{2h+1}$ given by the set of lines of the form 
$$\omega =   - \varphi e_0 + \sum_{i=1}^h  \frac{\partial \varphi}{\partial z_i} e_i + \sum_{i=1}^hz_if_i + f_0$$
and which satisfy the Hodge--Riemann inequalities (see Theorem \ref{thmhr}), and the action of $U$ is given by the unipotent subgroup corresponding to $z\mapsto z+v$.
\end{theorem}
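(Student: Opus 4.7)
The plan is to transfer the complex computation at the end of Section \ref{ssectube} to the $k$-rational setting, replacing the Harish-Chandra decomposition (which is only $\Cee$-rational) by a weight grading induced by the defining cocharacter of $P$.

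First I would extract a $k$-rational weight decomposition of $V$. Since $P$ is a $k$-parabolic in characteristic zero, Mostow's theorem on Levi decompositions yields a Levi decomposition $P = L\cdot U$ with $L$ defined over $k$. The center of $L$ contains a one-parameter subgroup $\lambda\colon \Gm\to L$ defined over $k$, acting on $\mathfrak{u}$ with a single nonzero weight (the tube-domain condition, equivalent to abelianness of $\mathfrak{u}$) and on the minuscule representation $V$ with four weights that can be normalized to $\pm 3,\pm 1$. This gives a $k$-rational grading $V = V^{(-3)} \oplus V^{(-1)}\oplus V^{(1)} \oplus V^{(3)}$ with $\dim V^{(\pm 3)} = 1$ and $\dim V^{(\pm 1)} = h$, parallel to \eqref{decomposek}. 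Pick nonzero $f_0\in V^{(3)}(k)$ and $e_0\in V^{(-3)}(k)$ with $\langle e_0, f_0\rangle = 1$, fix a $k$-basis $w_1, \ldots, w_h$ of $\mathfrak{u}(k)$, and set $f_i := N_{w_i}f_0 \in V^{(1)}(k)$. The $L$-equivariant map $w\mapsto N_w f_0$ from $\mathfrak{u}$ to $V^{(1)}$ is a map of irreducible $L$-modules of equal dimension and is forced to be nonzero (otherwise $V^{(3)}$ would be a $G$-submodule, violating irreducibility of the minuscule representation), hence a $k$-linear isomorphism, so $\{f_i\}$ is a $k$-basis of $V^{(1)}$. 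Define $e_1,\ldots, e_h \in V^{(-1)}(k)$ as the basis dual to $\{f_i\}$ under $\langle\cdot,\cdot\rangle$; $\lambda(\Gm)$-invariance of the pairing forces all other cross-pairings among $e_0, f_0, e_i, f_j$ to vanish, producing a symplectic $k$-basis of $V$.

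The cubic form then arises naturally. Setting $w = \sum z_i w_i$ and expanding
$$\omega(z) := \exp\Bigl(\sum_i z_i N_{w_i}\Bigr) f_0 = f_0 + N_w f_0 + \tfrac{1}{2}N_w^2 f_0 + \tfrac{1}{6}N_w^3 f_0$$
(the series terminates by the weight grading), the triple iteration lands in $V^{(-3)} = k\cdot e_0$, defining $d_{ijk}\in k$ via $N_{w_i}N_{w_j}N_{w_k}f_0 = d_{ijk}e_0$; I set $\varphi(z) := -\tfrac{1}{6}\sum d_{ijk}z_i z_j z_k\in k[z_1,\ldots, z_h]$. The identity $\langle Nv, v'\rangle + \langle v, Nv'\rangle = 0$ for $N\in\mathfrak{sp}(V)$ yields $\langle N_{w_i}N_{w_j}f_0, f_k\rangle = -d_{ijk}$, which, expanded in the basis $\{e_k\}$, gives $\tfrac{1}{2}N_w^2 f_0 = \sum_k (\partial\varphi/\partial z_k) e_k$. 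Assembling the three nonzero terms produces exactly \eqref{eqalld2}, and $T_v\omega(z) = \omega(z+v)$ follows from the one-parameter subgroup property of $\exp$, since $v\mapsto \exp(\sum v_i N_{w_i})$ is a $k$-rational realization of $U$ on $V$.

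The most delicate step — identifying the image of this parametrization with $\mathcal{D}$ rather than a proper subset — rests on the Kor\'anyi--Wolf unbounded realization \cite{korwolf}. The map $z\mapsto [\omega(z)]$ is a $U$-equivariant open embedding of $\Cee^h$ into $\check{\mathcal{D}}$, and by construction the cocharacter $\lambda$ and the parabolic $P$ are precisely the data entering the Kor\'anyi--Wolf realization of $\mathcal{D}$ as the tube domain $\{X+iY: Y\in\Omega\}$. Injectivity of the projection $\mathcal{D}\hookrightarrow\check{\mathcal{D}}\to \Pee^{2h+1}$ comes from Lemma \ref{wtthreelemma}, while Theorem \ref{thmhr} characterizes the cone $\Omega$ exactly as the Hodge--Riemann locus. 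The hard part to verify in detail will be the compatibility between our abstract $k$-rational construction and the explicit tube geometry: that the open $U(\bR)$-orbit through the imaginary axis of our coordinates does fill the cone carved out by the Hodge--Riemann conditions. This is precisely the content of the Kor\'anyi--Wolf theorem applied to the $0$-dimensional boundary component stabilized by $P$, which corresponds to the cusp $z\to i\infty$ of the tube realization.
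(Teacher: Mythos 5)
Your proposal is correct in substance and its core computation is the same as the paper's: exponentiate the abelian $\mathfrak{u}$-action on a $k$-rational line $\Cee\cdot f_0$, read off the cubic from $N_{w_i}N_{w_j}N_{w_k}f_0 = d_{ijk}e_0$, and use symplectic invariance of the $N_w$ to get the symmetry and the identification of the middle term with $\sum_k(\partial\varphi/\partial z_k)e_k$, recovering \eqref{eqalld2} exactly. Where you differ is in two sub-steps. First, for the $k$-rationality of the basis: you use that a $k$-parabolic is $P(\lambda)$ for a $k$-rational cocharacter $\lambda$ (via a $k$-Levi), so that $V$ carries a $k$-rational \emph{grading} with weights $\pm 3,\pm 1$, and you take $f_0$, $e_0$ to be weight vectors. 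The paper instead works with the $k$-rational \emph{filtration} $V^\bullet = (\mathfrak{u}^j(V))$, uses Zariski density of $G(k)$ in $G(\Cee)$ to find a $k$-rational point in the open $U(\Cee)$-orbit, and splits the filtration by hand ($V_0 = \Cee f_0$, $V_1 = \mathfrak{u}(V_0)$, $V_2 = V^2\cap f_0^\perp$, $V_3 = V^3$). Your grading argument is cleaner and avoids the density step; note only that to place $[f_0]$ on $\check\calD$ at all (so that the openness of $U(\Cee)\cdot[f_0]$, which you correctly deduce from injectivity of $w\mapsto N_wf_0$, takes place \emph{inside} $\check\calD$) you need that $\lambda$ is $G(\Cee)$-conjugate to the standard cocharacter, i.e.\ that $P(\Cee)$ is conjugate to $P_0(\Cee)$ — which is exactly the paper's first lemma, itself quoted from Kor\'anyi--Wolf \cite{korwolf}, so this is available but should be said.

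The second difference is the endgame, and it is the one place where your proof is genuinely incomplete as written: you outsource the containment $\calD\subseteq U(\Cee)\cdot[f_0]$ to the Kor\'anyi--Wolf tube realization and explicitly flag the compatibility check (that your $k$-rational coordinates match the Cayley-transform picture) as unverified. That appeal is substantively right, but the paper closes this step with a short self-contained argument you could adopt to discharge your flagged gap without any such bookkeeping: (a) the homogenized equations $y_0^2x_0 = -\varphi(y_1,\dots,y_h)$ and $y_0x_i = \frac{\partial\varphi}{\partial z_i}(y_1,\dots,y_h)$ show that the closure of the cell meets the affine chart $\{x\in\Pee^{2h+1}\colon \langle x, e_0\rangle\neq 0\}$ exactly in the cell itself; and (b) if some point of $\calD$ lay on the hyperplane $\langle x, e_0\rangle = 0$, then — since the Iwasawa decomposition $G(\R)=B(\R)K_0$ gives $G(\R)=P(\R)K_0$, so $P(\R)$ acts transitively on $\calD$, and $P$ stabilizes the line $\Cee\cdot e_0$ hence preserves the hyperplane condition — all of $\calD$ would lie on that hyperplane, contradicting openness of $\calD$ in $\check\calD$ (via Lemma \ref{wtthreelemma}). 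With that substitution, and the HR description of Theorem \ref{thmhr} carving out the image as in the paper, your argument becomes complete.
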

\begin{proof} The abelian unipotent subgroup $U$ defines a filtration $V^\bullet$ of $V$, defined over $k$: if $\mathfrak{u}$ is the Lie algebra of $U$, then $V^0 = V$, $V^1 = \mathfrak{u}(V)$, $V^2 = \mathfrak{u} (V^1)$, and $V^3 = \mathfrak{u} (V^2)$. Of course, the filtration $V^\bullet$ is isomorphic over $\Cee$ to the filtration defined by the action of $W$ on $V$ described above. In particular, $\dim V^0/V^1 = \dim V^3 = 1$ and $\dim V^1/V^2 = \dim V^2/V^3 = h$, and the symplectic form pairs $V^0/V^1$ with $V^3$ and $V^1/V^2$ with $V^2/V^3$. The group $P$ preserves the filtration $V^\bullet$, and in fact $P$ is the stabilizer in $G$ of $V^3$. For $w\in \mathfrak{u}$, if $N_w$ is the corresponding nilpotent endomorphism of $V$, if $f_0$ is a nonzero element of $V^0/V^1$ and $e_0\in V^3$ is the dual element, then $w\mapsto N_w(f_0)$ defines a $k$-linear isomorphism $\mathfrak{u} \to V^1/V^2$ and hence identifies $V^2/V^3$ with $\mathfrak{u}\spcheck$ over $k$. Write  $N_{w_1}N_{w_2}N_{w_3}(f_0)= C_0(w_1,w_2,w_3)e_0$, where $C_0$ is a symmetric trilinear form defined over $k$. Similarly, the linear map $w\mapsto N_wN_v(e_0)\bmod V^3$ defines a symmetric bilinear form $B_0\colon \mathfrak{u}\otimes\mathfrak{u} \to \mathfrak{u}\spcheck$, and $C_0(w_1,w_2,w_3) = \langle B_0(w_1,w_2),,w_3\rangle$ from the definition. Note that, over $\Cee$, $C_0$ and $B_0$ are projectively equivalent to $C$ and $B$ as defined above.

Since $U(\Cee)$ is conjugate to $P_-$, there exists a point $o\in G(\Cee)/P(\Cee)$ such that $U(\Cee)\cdot o$ is a nonempty   Zariski open subset of  $G(\Cee)/P(\Cee)$. Since $G(k)$ is Zariski dense in $G(\Cee)$, $U(\Cee)\cdot o$ contains a $k$-rational point. Hence we can assume that $o$ itself is $k$-rational and thus that   $U(\Cee)\cdot o$  is an affine space defined over $k$. Since $G(\Cee)/P(\Cee)$ is a Legendrian subvariety of the space $\check \bD$ of isotropic filtrations of $V$ in the usual sense, by taking the first subspace $\Cee\cdot \omega$ of the filtration we have a morphism from $U(\Cee)\cdot o$ to the $k$-rational variety $U(\Cee)\cdot x$, where $x \in \Pee^{2h+2}(k)$ is the line spanned by $\omega$. Write $x = \Cee \cdot f_0 $ where $f_0\in V(k)$. We now define subspaces of $V$, defined over $k$, as follows:
\begin{align*}
V_0 &= \Cee \cdot f_0;\\
V_1 &= \mathfrak{u}(V_0);\\
V_2 &= V^2\cap (f_0)^\perp;\\
V_3 &= V^3 = \Cee \cdot e_0,
\end{align*}
where $\langle e_0, f_0 \rangle = 1$ and $e_0$ is uniquely defined up to scalar. By construction, $\mathfrak{u}$ is the tangent space to $U(\Cee)\cdot o$ at every point, so by horizontality $V_0\oplus V_1$ is an isotropic subspace of $V$. By definition, $\mathfrak{u}(V_0) = V_1$ and in fact the choice of $f_0$ identifies $V_1$ with $\mathfrak{u}$. Moreover, 
$$\langle \mathfrak{u}(V_1), f_0 \rangle = \langle \mathfrak{u}(f_0), V_1 \rangle =\{0\}$$
since $V_1$ is isotropic. Hence $\mathfrak{u}(V_1) \subseteq V^2\cap (f_0)^\perp = V_2$. Finally, $\mathfrak{u}(V_2) \subseteq V^3=V_3$. Clearly, $V^2 = V_2\oplus V_3$ and $V^2$ is isotropic since $V^2 = \operatorname{Im}\mathfrak{u}^2$. It is then easy to see that $V =  V_0\oplus V_1\oplus  V_2\oplus V_3$.

Writing $N_w$ in terms of the direct sum decomposition and the identifications above gives
$$N_w(t,v, \xi, s) = (0, tw, B_0(v,w),-\langle \xi, w\rangle),$$
where the last entry follows from the fact that $N_w$ preserves the symplectic form and the identification of $V_1$ with $\mathfrak{u}$ and of $V_2$ with $\mathfrak{u}\spcheck$, via
$$\langle N_w(\xi), f_0\rangle = \langle N_w(f_0), \xi\rangle = \langle w, \xi\rangle = -\langle \xi, w\rangle.$$
Exponentiating the action of $N_w$ shows that the affine space $\mathbb{A} = U(\Cee)\cdot x$ is described via Equation~\ref{eqalld2}. Moreover, the Zariski closure $\overline{\mathbb{A}}$ is equal to the image of $\check\calD$ and hence certainly contains the image of $\calD$. We must show that $\calD$ is contained in $\mathbb{A}$, not just in its closure. Note that, using the notation of Equation~\ref{eqalld2} and taking homogeneous coordinates $x_0, x_1, \dots, x_h, y_1, \dots, y_h, y_0$ on $\Pee^{2h+1}$ corresponding to the basis $e_0, e_1, \dots, e_h, f_1, \dots , f_h, f_0$, the projective closure $\overline{\mathbb{A}}$ is contained in the variety defined by the homogeneous equations $y_0^2x_0 = -\varphi(y_1, \dots, y_h)$ and $\D y_0x_i = \frac{\partial \varphi}{\partial z_i}(y_1, \dots, y_h)$ for $i= 1, \dots, h$. Hence, the intersection of $\overline{\mathbb{A}}$ with the affine open subset $y_0 \neq 0$, or equivalently with $\{x \in \Pee^{2h+1}: \langle x , e_0\rangle \neq 0\}$, is exactly $\mathbb{A}$. So it suffices to show that the image of $\calD$ in $\Pee^{2h+1}$ is contained in the affine open subset $\{x \in \Pee^{2h+1}: \langle x , e_0\rangle \neq 0\}$. Equivalently, we must show that there does not exist an $x$ in the image of $\calD$ such that $\langle x , e_0\rangle  =0$.

To see this last statement, suppose that such an $x$ did exist. Let $B$ be any minimal real parabolic subgroup of $G$ contained in $P$ and let $K_0$ by any maximal compact subgroup of $G(\R)$ for which the Iwasawa decomposition $G(\R) = B(\R)K_0$ holds. Then $G(\R)= P(\R)K_0$ as well. It follows that $P(\R)$ acts transitively on $\calD$ and preserves the condition that 
$\langle x , e_0\rangle  =0$. But then the image of $\calD$ would be contained in $\{x \in \Pee^{2h+1}: \langle x , e_0\rangle = 0\}$. This contradicts the fact that $\calD$ is open in $\check\calD$.
\end{proof}

\begin{remark} 
 If $\varphi$ is an arbitrary cubic polynomial defined over a subfield $k$ of $\R$ and $\hat{Z}$ is (the Zariski closure of the locus) defined by Equation~\ref{eqalld2}, then $\operatorname{Aut}_{\Sp}(\hat{Z})$, the group of symplectic automorphisms of $\Pee^{2h+1}$ preserving $\hat{Z}$, is an affine group scheme defined over $k$. In particular, in the Hermitian symmetric case, if $\varphi$ is defined over $k$, then $G$ can be defined over $k$ as well.
\end{remark}

The compact duals in the weight three tube domain cases are connected to homogeneous Legendrian submanifolds via the following theorem (e.g.\ \cite{mukai}, \cite{lm4}).  

\begin{theorem}\label{classifylegendre}
The homogeneous Legendrian varieties $X=G(\bC)/P(\bC)$ are as follows:
\begin{itemize}
\item[(i)] a linear embedding $\bP^{h}\subseteq \bP^{2h+1}$ for $G(\bC)=\SL(h+3,\bC)$.
\item[(ii)] the Segre embedding $\bP^1\times Q_{h-1}\subseteq \bP^{2h+1}$ (with $Q_h$  a quadric) for $G(\bC)=\SL(2,\bC)\times \SO(n,\bC)$;
\item[(iii)] the twisted cubic $\bP^1\subseteq \bP^3$ for  $G(\bC)=\SL(2, \Cee)$;
\item[(iv)] the subexceptional series corresponding to $A_5$, $C_3$, $D_6$, and $E_7$ respectively (these are  discussed in detail in \cite{lm1}).
\end{itemize}
\end{theorem}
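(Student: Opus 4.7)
The plan is to reduce the classification to a question about cominuscule parabolic subgroups of simple complex Lie groups together with their minuscule representations, and then to run through the short list. Let $X = G(\bC)/P(\bC) \hookrightarrow \bP(V)$ be a homogeneous Legendrian variety with $\dim X = h$, so that $V \cong \bC^{2h+2}$ carries a $G(\bC)$-invariant symplectic form. Fixing a base point $[v_0] \in X$, I would first argue that $P(\bC)$ is a cominuscule parabolic: the Legendrian condition together with transitivity of $G(\bC)$ on $X$ forces the Lie algebra to admit a short $\bZ$-grading $\mathfrak{g} = \mathfrak{p}_- \oplus \mathfrak{l} \oplus \mathfrak{p}_+$ with abelian unipotent radicals (equivalently, $P$ corresponds to a simple root appearing with coefficient $1$ in the highest root). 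The grading on $V$ induced by the cocharacter defining this grading then has exactly four weight spaces $V = V_3 \oplus V_2 \oplus V_1 \oplus V_0$ with $\dim V_3 = \dim V_0 = 1$, reproducing the structure established in Section~\ref{ssectube}; $V$ is thus forced to be (an irreducible summand of) a minuscule representation of $G(\bC)$ that admits a $G$-invariant symplectic form.

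The second step is the enumeration. The pair $(G, V)$ is either simple or a product of two simple factors (the latter precisely when the symplectic form arises as a tensor product of two invariant forms on the factors). Combining Lemma~\ref{reptype} with Bourbaki's tables, the cominuscule weights $\varpi_i$ with $\tau \varpi_i = \varpi_i$ whose fundamental representation is symplectic rather than orthogonal yield exactly the following list: the group $A_1$ acting on $\Sym^3 \bC^2$, giving the twisted cubic of case (iii); the product $A_1 \times B_n$, respectively $A_1 \times D_n$, acting on the tensor product of the standard representations, producing the Segre embedding $\bP^1 \times Q_{h-1} \subset \bP^{2h+1}$ of case (ii); and the four subexceptional entries $A_5$, $C_3$, $D_6$, $E_7$ equipped with their cominuscule representations of dimensions $20$, $14$, $32$, $56$ respectively, giving case (iv). The degenerate situation, where the cubic form $C$ on $V_2$ produced by the bracket structure vanishes identically, gives the linear Lagrangian $\bP^h \subset \bP^{2h+1}$ of case (i).

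The main obstacle will be verifying that each candidate on the list actually gives a Legendrian variety rather than merely an isotropic one, and that no other case has been missed. The first amounts to showing that the cubic $C$ coming from the triple bracket $\mathfrak{p}_- \times \mathfrak{p}_- \times \mathfrak{p}_- \to \Cee \cdot v_0$ is nondegenerate in the appropriate sense; in the subexceptional cases this is where the Freudenthal magic square intervenes, identifying $C$ with the cubic norm of a simple rank-$3$ Jordan algebra, and the Legendrian defining equations become the Pl\"ucker-type relations cutting out the highest-weight orbit in $\bP(V)$, precisely matching the equations produced by Theorem~\ref{hermcubiccase}. The reconciliation with Corollary~\ref{thmclassify3}(i) is then immediate: the four entries in case (iv) are precisely the complexifications of the real tube-type Hermitian symmetric domains carrying a weight-three Hermitian VHS of Calabi--Yau type.
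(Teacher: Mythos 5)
First, a point of context: the paper itself gives no proof of Theorem~\ref{classifylegendre} --- it is quoted from the literature (Mukai \cite{mukai}, Landsberg--Manivel \cite{lm4}, \cite{lm1}), with only the subsequent remark matching the list to the maximal weight-three Hermitian VHS of Corollary~\ref{thmclassify3}. So your proposal must stand on its own, and as written it has two genuine gaps.

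The first gap is that the load-bearing structural step is announced rather than proved. The assertion that transitivity plus the Legendrian condition forces a short grading $\fg = \fp_-\oplus\mathfrak{l}\oplus\fp_+$ with abelian nilradicals, and a four-step grading on $V$ with one-dimensional extreme pieces, is precisely the hard content of the theorem: it is where Landsberg--Manivel (and later Buczy\'nski) do the real work, via the contact/subadjoint correspondence or an analysis of the second fundamental form. ``I would first argue'' is not an argument, and the step is not routine: one must extract from the isotropy condition $\langle X v_0, Y v_0\rangle = 0$ for all $X,Y\in\fg$ the abelianness of the nilradical, rule out linearly degenerate configurations, and only then does the pairing argument force exactly four graded pieces with $\dim V_3=\dim V_0=1$. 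Relatedly, the linear case (i) does not fit your framework at all ($V=\bC^{h+1}\oplus(\bC^{h+1})^\vee$ is reducible and the transitive group is not semisimple acting irreducibly); it has to be split off at the start by showing a linearly degenerate homogeneous Legendrian variety is a linear space, not appended at the end as ``the degenerate situation.''

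The second gap is that your enumeration criterion is wrong as stated, and in fact inconsistent with your own list. You require $V$ to be (a summand of) a \emph{minuscule} representation, and later enumerate ``cominuscule weights $\varpi_i$ \ldots whose \emph{fundamental} representation is symplectic.'' But the twisted cubic of case (iii) is $\bP(\Sym^3\bC^2)$ with highest weight $3\varpi_1$ --- neither fundamental nor minuscule --- and the vector representation of type $B$ (needed for the odd-dimensional quadrics $Q_{h-1}$ in case (ii)) is cominuscule but not minuscule. So your stated criterion does not generate items (ii) and (iii), which you nevertheless include. The correct constraint, exactly as in the paper's Lemma~\ref{mainlemma}, is only that the highest weight be a \emph{multiple} $n\varpi_i$ of the cominuscule weight; the four-step condition then forces $\lambda(H_0)-w_0\lambda(H_0)=3$, which is what lets $n>1$ survive precisely for $(A_1, 3\varpi_1)$ and pins the fundamental cases to real rank three, i.e.\ $(A_5,\varpi_3)$, $(C_3,\varpi_3)$, $(D_6,\varpi_6)$, $(E_7,\varpi_7)$. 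Your ``simple or product of two simple factors'' dichotomy likewise needs an argument (additivity $3=1+2=1+1+1$ together with the parity rule that symplectic~$\otimes$~orthogonal is symplectic while symplectic~$\otimes$~symplectic is orthogonal); note that three $A_1$ factors do occur and are only accidentally absorbed into $\SL(2)\times\SO(4)$. With these repairs your argument becomes correct, and it is then essentially the paper's own observation following the theorem --- that homogeneous Legendrian varieties correspond to the maximal weight-three tube-type Hermitian VHS of CY type of Corollary~\ref{thmclassify3} --- run in reverse, rather than an independent proof.
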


 Note also that the homogeneous Legendrian varieties correspond precisely to the maximal Hermitian VHS of Calabi--Yau threefold type (see  Corollary~\ref{thmclassify3} and Remark~\ref{remmaxvar}).
Specifically, the four examples of Gross \cite{bgross} (cf.\ Corollary~\ref{thmclassify3}(i)) correspond to   Case (iv) in the above theorem. The remaining cases arise as follows: Case (iii) corresponds to $\Sym^3V$, where $V$ is the canonical weight one variation of elliptic curve type over the upper half plane $\mathfrak{H}$ (see Corollary~\ref{thmclassify3}(ii)). Case (ii) corresponds to $\mathfrak{H}\times \mathcal{D}$, where $\mathcal{D}$ is a Type $\IV_n$ symmetric space corresponding to  variations of $K3$ type (see Corollary~\ref{thmclassify3}(iv)). Case (i) corresponds to the unit ball as discussed elsewhere in this paper (e.g.\ Corollary~\ref{thmclassify3}(iii)). 
\begin{remark}
Some examples of non-homogeneous Legendrian varieties have been given by Landsberg--Manivel \cite{lm4} and Buczy\'nski \cite{buchyp}. While these examples give interesting horizontal subvarieties of period domains $\bD$ of CY threefold type, they will be stabilized by small groups and thus will not occur as images of period maps (compare with Theorem \ref{thmalgebraic} and Theorem \ref{thmstabilizersmooth}).
\end{remark}

\begin{remark}\label{secants} There is a further connection between the compact duals and Severi varieties (see \cite{mukai}, \cite{baily}, \cite{lm1}). By a theorem of Zak  (see \cite{lazarsfeld}) there are precisely $4$ smooth projective varieties $X\subseteq \bP^n$ with $3\dim X=2(n-2)$ such that the secant variety $\mathrm{Sec}(X)$ is not (as expected) $\bP^n$. In fact, $\mathrm{Sec}(X)\subseteq \bP^n$ is an irreducible cubic hypersurface with large symmetry group. The $4$ examples (of dimensions $2,4,8,16$) are: 

\begin{itemize} 
\item[(1)] {\it The Veronese surface:} $\bP^2\hookrightarrow \bP^5$;
\item[(2)] {\it The Segre embedding:} $\bP^2\times\bP^2\hookrightarrow \bP^8$;
\item[(3)] {\it The Pl\"ucker embedding:} $G(2,6)\hookrightarrow \bP^{14}$;
\item[(4)] {\it The exceptional case:} $X\hookrightarrow \bP^{26}$, the orbit of the highest weight vector for a $27$-dimensional representation of $E_6$.
\end{itemize} 
Here the cases (1)--(4) correspond to the $4$ tube domains. In all cases, $X$ is a compact Hermitian symmetric space for the complex form $G'_\Cee$ of the derived group of $K$, the secant variety $\mathrm{Sec}(X)$ is the cubic $C =V(\varphi) \subseteq \Pee^{h-1}$, and $X$ is the singular locus of $\mathrm{Sec}(X)$.
\end{remark}

 \subsection{The case where the domain is not of tube type}
We begin with the analogue of the Landsberg--Manivel picture in this case. Here, if $\mathfrak{g}_\Cee$ is the complex Lie algebra of $G$ and we write $\mathfrak{k}_\Cee \cong  \mathfrak{g}_\bC'\oplus \mathfrak{z}$ as before, then as in the previous case $\mathfrak{g}_\Cee \cong \mathfrak{k}_\Cee \oplus W_+ \oplus W_-$, where $W_\pm$  are irreducible representations of $\mathfrak{k}_\Cee$  and $W_-$ is the dual of $W_+$; as before  $W_\mp= \mathfrak{p}_\pm$.
  The minuscule representation $V_+$ of $\mathfrak{g}_\Cee$ splits as a $\mathfrak{k}_\Cee$-module as
$$V_+ \cong \underline{\Cee} \oplus W_+ \oplus W_0,$$
and hence by duality (using $V_{-}=V_+\spcheck$ as in Section \ref{sectclassify})
$$V_- \cong \underline{\Cee} \oplus W_- \oplus W_0\spcheck.$$
Here $\mathfrak{g}_\bC'$ acts in the standard way on $W_\pm$, $W_0$, and $W_0\spcheck$ and trivially on the two factors $\underline{\Cee}$, and the center $\mathfrak{z}$ acts diagonally with weights as described in Section \ref{sectclassify} (esp. \S\ref{listrep}). As before, $V_\bC=V_+\oplus V_-$ has a natural real structure. 

The main new feature in the non-tube case is that there exists a $G'_\Cee$-invariant bilinear form $B\colon W_+\otimes W_+ \to W_0$ or dually $B^*\colon W_0\spcheck \otimes W_+\to W_-$. For example, in case $G$ is of type $E_6$, then $\fg'_\bC$ is of type $D_5$,  $W_\pm$ are the two $16$-dimensional spin representations of $\mathfrak{g}_\bC'$, $W_0 \cong W_0\spcheck$ is the standard representation of dimension $10$, and $B^*$ is Clifford multiplication. The unit ball case ($G = \SU(1,n)$) is the special case where $W_0 = 0$, $B=0$, and $W_+$ is the standard representation of $\U(n)$.

In the above notation, the natural nilpotent action of $\mathfrak{p}_-  =W_+$ on $V$ is given by
$$N_w(s,v,e) = (0, sw, B(v,w)),$$
with $N_w^2\neq 0$, $N_w^3 =0$.
The dual action on $V_- \cong W_0\spcheck \oplus W_- \oplus \underline{\Cee}$ is then
$$N_w(e,\xi, t) = (0, B^*(e,w), \langle \xi, w\rangle).$$
Hence, on $V$,  $N_w^2(s,v,e) = (0, 0, sB(w,w))$ and 
$$\exp N_w(s,v,e)   = (s, v+sw, e+ B(v,w) + \frac{s}2B(w,w)).$$
In particular
$$\exp N_w(1,0,0)   = (1,  w,  \frac12B(w,w)),$$
and this is the explicit complex  description of the horizontal subvariety $Z$ in this notation (as opposed to that of Equation~\eqref{eqallddegen3}).

In summary, we have shown the following:

\begin{theorem}\label{thmexplicit2} 
Let $V_\bC=V_+\oplus V_-$ be a weight three Hermitian variation of Hodge structure of complex type. Then there exists a complex basis of $V_+$ and $V_-$ such that the variation of Hodge structure  is described by Equation~\eqref{eqallddegen3}, where the $q_k$ are homogenous quadratic polynomials.
\end{theorem}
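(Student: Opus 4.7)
The plan is to match the explicit orbit computation that immediately precedes the theorem with the parametric form of Equation~\eqref{eqallddegen3}. The strategy is to use the Harish-Chandra style parametrization: the abelian unipotent subgroup $P_{-} \subseteq G(\Cee)$, whose Lie algebra is $\mathfrak{p}_{-} = W_+$, acts freely on a Zariski open neighborhood of the reference Hodge structure $F^\bullet_0$ in $\check \calD$, and the image of this orbit fills out an affine chart that contains an open subset of $\calD$ (in particular, of the $\calD$-shaped horizontal subvariety inside $\bD$).

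First I would fix a basis of $V_\bC=V_+\oplus V_-$ adapted to the $\mathfrak{k}_\Cee$-module decomposition $V_+\cong\underline{\Cee}\oplus W_+\oplus W_0$ and its dual $V_-\cong \underline{\Cee}\oplus W_-\oplus W_0^\vee$. Concretely: choose a nonzero $\delta_0$ in the summand $\underline{\Cee}\subseteq V_+$ (so that $\Cee\cdot\delta_0$ is the highest weight line $V^{3,0}$); take a basis $\delta_1,\dots,\delta_a$ of $W_+$, with $a=\dim_{\Cee} W_+$; and take a basis $\delta_{a+1},\dots,\delta_h$ of $W_0$, with $h-a=\dim_\Cee W_0$. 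Let $\varepsilon_0,\dots,\varepsilon_h$ be the corresponding basis of $V_-$ obtained via the natural pairing, with signs arranged so that the symplectic form $\langle\,\cdot\,,\,\cdot\,\rangle$ agrees with the convention of \S\ref{sectequations2}; the underlying real structure on $V_\bC$ makes this normalization possible because it identifies $V_-$ with $\overline{V_+}$.

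Next I would invoke the computation displayed just before the theorem statement: for $w\in W_+$, one has $N_w(s,v,e)=(0,sw,B(v,w))$, hence $N_w^2(1,0,0)=(0,0,B(w,w))$ and $N_w^3=0$ on $V_+$, so
$$\exp N_w(1,0,0) \;=\; \bigl(1,\,w,\,\tfrac12 B(w,w)\bigr)\;\in\;\underline{\Cee}\oplus W_+\oplus W_0.$$
Writing $w=\sum_{i=1}^a z_i\delta_i$ and expanding $\tfrac12 B(w,w)=\sum_{k=a+1}^h q_k(z)\,\delta_k$, the symmetry and bilinearity of the $\mathfrak{g}'_\Cee$-equivariant map $B\colon W_+\otimes W_+\to W_0$ guarantee that each $q_k$ is a homogeneous polynomial of degree exactly two in $z_1,\dots,z_a$. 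Substituting, the orbit is parametrized by
$$\omega(z)\;=\;\delta_0 \;+\;\sum_{i=1}^{a} z_i\,\delta_i\;+\;\sum_{k=a+1}^{h} q_k(z)\,\delta_k,$$
which is term-by-term the form of Equation~\eqref{eqallddegen3}. The induced Hodge filtration on $V_\bC$ agrees with the one constructed in \S\ref{sectequations2}, since both are obtained by transporting the reference filtration $F^\bullet_0$ along the same $P_{-}$-orbit, and Griffiths transversality is automatic from the inclusion $\mathfrak{p}_{-}\subset\mathfrak{g}_\Cee$.

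The only mild obstacle is to ensure that the $\exp N_w$ orbit really covers a Zariski open subset of the horizontal subvariety, and not merely a formal or analytic neighborhood. This follows from two facts already in the paper's toolkit: the Harish-Chandra embedding $\calD\subseteq P_{-}P_0(\Cee)/P_0(\Cee)$ (so $P_{-}$ acts with open orbit through the reference point of $\check\calD$), and the abelianness of $\mathfrak{p}_{-}=W_+$, which makes $\exp\colon\mathfrak{p}_{-}\to P_{-}$ an algebraic isomorphism. Once this identification is in place, the match with Equation~\eqref{eqallddegen3} and the homogeneous quadratic nature of the $q_k$ are purely formal consequences of the computation above.
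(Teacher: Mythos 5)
Your proposal is correct and takes essentially the same approach as the paper: the paper's proof of Theorem~\ref{thmexplicit2} is precisely the orbit computation displayed just before it, namely $\exp N_w(1,0,0) = \left(1, w, \tfrac12 B(w,w)\right)$ for the abelian nilpotent action of $\mathfrak{p}_- = W_+$ on $V_+ \cong \underline{\Cee}\oplus W_+\oplus W_0$, combined with the Harish-Chandra fact (recalled at the start of Section~\ref{sect5}) that every Hodge structure in $\calD$ lies in the $P_-$-orbit of the reference filtration. Your extra care in choosing the adapted basis, matching the signs to the conventions of Section~\ref{sectequations2}, and checking that the transported filtration agrees with the one built from $\omega(z)$ and its derivatives merely makes explicit what the paper compresses into its ``In summary, we have shown the following'' conclusion.
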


\begin{remark} There are also rationality statements in case $G$ and $P$ are defined over a subfield $k$ of $\R$ and $V_{\pm}$  are defined over an imaginary quadratic extension $K$ of $k$.
\end{remark}

\begin{remark} In  the above notation, we have
\begin{enumerate}
  \item $N_w \neq 0$ $\iff$ $w\neq 0$.
  \item If $w\neq 0$, then $\operatorname{Ker}N_w = \{(0, v, e): B(v,w) = 0\}$.
  \item $N_w^2 \neq 0$ $\iff$ $B(w,w)\neq 0$.
\end{enumerate}
\end{remark}

\subsection{An example} As already mentioned (see Remark \ref{remmaxvar}, Remark~\ref{extremark}, and Equation~\eqref{eqdegenext}), most of the complex cases can be embedded into maximal Hermitian VHS (of real type). Thus, we consider the following situation:
$$\calD'\hookrightarrow \calD\hookrightarrow \bD,$$
where $\calD'$ and $\calD$ are Hermitian symmetric domains with a holomorphic equivariant embedding $\calD'\subset \calD$ (as classified by Satake \cite{satake} and Ihara \cite{ihara}), and  $\calD\hookrightarrow \bD$ is a maximal horizontal embedding in a period domain $\bD$ of Calabi--Yau threefold type. We assume $\calD$ is of tube type, while $\calD'$ is not. Clearly, $\calD'\hookrightarrow \bD$ is a horizontal embedding.  It is interesting to compare (via restriction from $\calD$ to $\calD'$) the description of the embedding of $\calD$ given by Theorem \ref{hermcubiccase} with that of $\calD'$ given by Theorem \ref{classifylegendre}. We will discuss this below for the two most interesting cases in the Satake--Ihara classification, both cases corresponding to maximal subdomains in the exceptional domain $\EVII$.

\subsubsection{$\EIII\hookrightarrow \EVII$}\label{e6e7}
 As already noted, the minuscule $27$-dimensional representation for $\E_6$ decomposes as a $\Spin(10)$-module into $\underline{\Cee} \oplus W_+ \oplus W_0$, where $W_+$ is one of the half spin representations, of dimension $16$,  and $W_0$ is the standard representation, of dimension $10$. The procedure of Remark~\ref{extremark} defines a $26$-dimensional variation of Hodge structure associated to a cubic form $\Phi_0$, which can be thought of intrinsically as induced from the trilinear form $W_+\otimes W_0 \otimes W_- \to \Cee$ given by Clifford multiplication. On the other hand, the description of the $\EVII$ domain is given by Equation \eqref{eqalld2} with $\Phi$ the Cartan cubic, the unique $E_6$-invariant cubic polynomial on $W$, where $W$ is a minuscule $E_6$-representation (compare \eqref{decomposek}).  The precise relationship between the equations realizing the $\EIII$ domain as 
 a horizontal subvariety of CY type (based on Clifford multiplication) and those 
for the $\EVII$ domain (based on the Cartan cubic) is then as follows:
\begin{proposition}
With notation as above, $\Phi_0$ is the restriction of the Cartan cubic $\Phi$ to a hyperplane. 
\end{proposition}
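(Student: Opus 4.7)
The plan is to identify $\Phi|_{\text{hyperplane}}$ via representation theory, using that the cominuscule $27$-dimensional $E_6$-representation $W$ (on which the Cartan cubic lives) decomposes under the Levi subgroup $\Spin(10)\times U(1)$ of $E_6$ as $W\cong\underline{\Cee}\oplus W_+\oplus W_0$, with dimensions $1$, $16$, $10$. The embedding $\EIII\hookrightarrow\EVII$ corresponds at the level of maximal compact subgroups to the inclusion $\Spin(10)\times U(1)\hookrightarrow E_6\times U(1)$, and the inclusion of the $U(1)$ centers tracks the Hodge-theoretic grading. Writing out $\EVII$ via Theorem~\ref{hermcubiccase} with the cubic $\Phi$ on $W$, the $\EIII$-subdomain will be cut out by restricting to the $\Spin(10)$-stable $26$-dimensional hyperplane $H:=W_+\oplus W_0\subset W$, so that the trivial summand $\underline{\Cee}$ is exactly the highest-weight line of $W$ (which, at the reference Hodge structure, is $V^{3,0}$).

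The heart of the proof is an invariant-theoretic computation: I claim that the space of $\Spin(10)$-invariant cubic forms on $H$ is one-dimensional, spanned by $\Phi_0$. Decomposing
\[
\Sym^3 H^\vee\cong \Sym^3 W_+^\vee\;\oplus\;(\Sym^2 W_+^\vee\otimes W_0^\vee)\;\oplus\;(W_+^\vee\otimes\Sym^2 W_0^\vee)\;\oplus\;\Sym^3 W_0^\vee,
\]
a standard $D_5$-weight check (or Weyl character computation) shows that the first and last summands carry no $\Spin(10)$-invariants (there is no $\SO(10)$-invariant cubic on the standard $10$-dimensional representation, and $\Sym^3$ of the half-spin contains no trivial), that $W_+^\vee\otimes\Sym^2 W_0^\vee$ carries no invariant either, and that $\Sym^2 W_+^\vee\otimes W_0^\vee$ carries a unique (up to scalar) invariant, given by the dual of Clifford multiplication $B\colon\Sym^2 W_+\to W_0$. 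This invariant is, by construction in Remark~\ref{extremark}, precisely $\Phi_0$.

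Given this uniqueness, the restriction $\Phi|_H$ of the $E_6$-invariant (hence in particular $\Spin(10)$-invariant) Cartan cubic to the $\Spin(10)$-stable subspace $H$ must be a scalar multiple $c\cdot\Phi_0$. To see that $c\neq 0$, one can use the explicit model $W\cong H_3(\bO)\otimes\Cee$ with $\Phi=\det$ and observe that a generic element of $H$ has nonzero determinant; alternatively and more conceptually, the $\EIII$-subdomain of $\EVII$ is a non-degenerate horizontal Calabi--Yau type subvariety whose equations (via Theorem~\ref{thmexplicit2} and Equation~\eqref{eqdegenext}) force $\Phi|_H$ to be non-vanishing.

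The main obstacle is the bookkeeping in the first step: one has to verify that the hyperplane of $W$ singled out by the coordinates $(z_1,\dots,z_a,z_{a+1},\dots,z_h)$ of Equation~\eqref{eqdegenext} truly is the $\Spin(10)$-stable hyperplane $H$, and not some other $\Spin(10)$-invariant complement. This amounts to a careful matching between the Hodge-theoretic grading induced by the $U(1)$-cocharacter $\varphi$ used in the $\EIII$-construction and the grading on $W$ inherited from the Levi of the parabolic of $E_6$ associated with the cominuscule node $\alpha_1$. Once this identification of gradings is in place, the uniqueness of $\Spin(10)$-invariant cubics on $H$ and the non-vanishing check complete the proof.
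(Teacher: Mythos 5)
Your proof is correct, but it takes a genuinely different route from the paper's. The paper argues combinatorially through the del Pezzo model of the weights: it identifies the weights of the $27$-dimensional minuscule $E_6$-representation with the lines on a cubic surface $X_6$, where three weight spaces pair nontrivially under the Cartan cubic if and only if the corresponding lines sum to a hyperplane section; restricting to $D_5\subset E_6$, the weights of $W_+$, $W_0$, $W_-$ become lines, pencils of conics, and twisted cubics on a degree $4$ del Pezzo $X_5$, and the paper matches the $26$ lines on $X_6$ other than $e_6$ with the $16+10$ weight spaces term by term, thereby identifying $\Phi|_H$ with $\Phi_0$ (up to scalings) directly. You instead argue by multiplicity one: the isotypic decomposition $W\cong \underline{\Cee}\oplus W_+\oplus W_0$ shows that $H=W_+\oplus W_0$ is the \emph{unique} $\Spin(10)$-stable hyperplane, the decomposition of $\Sym^3 H^\vee$ (using $\Sym^2 W_+\cong V_{2\varpi_5}\oplus V_{\varpi_1}$ and the self-duality of $W_0$, so the Clifford invariant occurs once, while $\Sym^3$ of the half-spin and of the standard representation contain no trivial summand) shows the space of $\Spin(10)$-invariant cubics on $H$ is one-dimensional, and nonvanishing of $\Phi|_H$ follows from the $H_3(\bO)$ model with $\Phi=\det$ (the Peirce decomposition with respect to a rank one idempotent realizes the $1+16+10$ splitting, and $\det$ is visibly nonzero on the complementary $26$-dimensional piece). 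Your argument is shorter and more structural: it avoids the weight-by-weight matching entirely, and it proves exactly what the paper's computation yields anyway, namely equality up to a nonzero scalar (the paper is explicitly ``careless about possible scalings''). What the del Pezzo computation buys in exchange is an explicit dictionary for which weight spaces pair, which the paper reuses in the subsequent $\I_{2,6}\hookrightarrow\EVII$ analysis, where the naive restriction $\Phi_0$ must be corrected by a term $\Phi_1$ before it agrees with the Cartan cubic. One further remark: the ``main obstacle'' you flag --- matching the hyperplane singled out by the coordinates of Equation~\eqref{eqdegenext} with the $\Spin(10)$-stable hyperplane --- in fact dissolves by your own first step: since each isotypic component occurs with multiplicity one, $H$ is the only $\Spin(10)$-invariant hyperplane of $W$, and $\Phi_0$ is $\Spin(10)$-invariant by construction (it is built from Clifford multiplication, which is $\fg'_\bC$-equivariant), so no grading bookkeeping is actually needed.
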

\begin{proof} Perhaps the most natural way to do so is via del Pezzo surfaces: view the root system for $E_6$ as the primitive cohomology of $X_6$, the blowup of $\Pee^2$  at $6$ general points, and let $h$ and $e_1, \dots, e_6$ be the classes of the pullbacks of the hyperplane class in $\Pee^2$ and the $6$ exceptional divisors, and view the root system for $D_5$ as lying in the span of $h, e_1, \dots, e_5$. The weights of one of the minuscule representations of $E_6$ correspond to lines on a cubic surface, and three weight spaces pair nontrivially under the cubic $\iff$ the lines sum to a hyperplane section. For $D_5$, the weights for $W_+$ can be taken to correspond to lines on a degree $4$ del Pezzo surface $X_5$, those for $W_0$ correspond to pencils of conics, and those for $W_-$ to linear systems of twisted cubics on $X_5$, with a line and a twisted cubic pairing nontrivially $\iff$ their sum is a hyperplane section. Then viewing Clifford multiplication as a pairing $W_+ \otimes W_+ \to W_0$, two weight spaces corresponding to lines $\ell_1$ and $\ell_2$ pair nontrivially $\iff$ $\ell_1 \cdot \ell_2 = 1$, and in this case the corresponding weight space is the pencil of conics $|\ell_1 + \ell_2|$; equivalently, a line  and a conic pair nontrivially $\iff$ their sum is a hyperplane section. To relate this to the Cartan cubic, note that there are $11$ lines on $X_6$ which do not correspond to lines on $X_5$. Ten of these correspond to pencils of conics on $X_5$: if $|C|$ is such a pencil, there is a unique element $C_0\in |C|$ passing through the point corresponding to the $6^{\text{th}}$ blowup and the new line is the proper transform of $C_0$. The remaining line is $e_6$. In this way, we can identify the restriction of the Cartan cubic to the sum of all of the weight spaces except for that spanned by $e_6$ (at least over $\Cee$, and being somewhat careless about possible scalings of the factors) with $\Phi_0$.
\end{proof}

\subsubsection{$\I_{2,6}\hookrightarrow \EVII$} If we look instead at the subspace of the $\EVII$ Hermitian symmetric space corresponding to $\SU(2,6)$ (i.e.\ of type $\I_{2,6}$), then we see a somewhat different picture. The homomorphism $\SU(2,6) \to  \E_{7,3}$ induces a homomorphism on the maximal compact subgroups, and hence on the semisimple factors $\SU(2) \times \SU(6) \to \E_6$. We work over $\Cee$, and use the notation of del Pezzo surfaces as in \S\ref{e6e7} to describe the roots for $E_6$ and $E_7$.  The simple roots for $E_6$ are $e_1-e_2, \dots, e_5-e_6, h-e_1-e_2-e_3$, and the highest root is $\tilde\alpha = 2h-\sum_{i=1}^6e_i$. Using the fact that (up to Weyl equivalence) there is a unique embedding of $A_7$ in $E_7$, one can check that the embedding of $A_1+ A_5$ in $E_6$ can be assumed to be of the following form: take $e_1-e_2, \dots, e_5-e_6$ for the simple roots of the $A_5$ term and $-\tilde \alpha$ as the simple root for the $A_1$ term.
The $27$-dimensional minuscule representation $W$ for $\E_6$ with highest weight $e_6$ then becomes a representation for $\SU(2) \times \SU(6)$. To describe it, let $V_0$ be the standard representation for $\SU(2,6)$, so that $V_0$ decomposes as a representation for the maximal compact subgroup $\mathrm{S}(\U(2) \times \U(6))$ as $W_1 \oplus W_2$, where $\dim W_1 = 2$ and $\dim W_2 = 6$, in the obvious way. Then the representation $V =\bigwedge ^2V_0$ decomposes as $\underline{\Cee} \oplus (W_1\otimes W_2) \oplus \bigwedge ^2W_2\spcheck$, where $\dim W_1\otimes W_2 =12$ and $\dim \bigwedge ^2W_2\spcheck = 15$. Thus $W$, viewed as a representation of $\SU(2) \times \SU(6)$, is isomorphic to $(W_1\otimes W_2) \oplus \bigwedge ^2W_2\spcheck$. The recipe of Remark~\ref{extremark} constructs the cubic form $\Phi_0$ on $W= (W_1\otimes W_2) \oplus \bigwedge ^2W_2\spcheck$  as follows: given $x_1, x_2 \in W_1\otimes W_2$, write $x_1\wedge x_2$ for the (symmetric) pairing $(W_1\otimes W_2) \otimes (W_1\otimes W_2) \to (\bigwedge ^2W_1)\otimes (\bigwedge ^2W_2)\cong \bigwedge ^2W_2$, and define the trilinear form
$C_0(x_1,x_1, \xi) = \xi(x_1\wedge x_2)$. Then $C_0$ corresponds to a cubic form $\Phi_0$ on the $27$-dimensional vector space $(W_1\otimes W_2) \oplus \bigwedge ^2W_2\spcheck$. However, $\Phi_0$ is \textbf{not} the Cartan cubic. Instead we must modify $\Phi_0$ (as discussed at the end of  Remark~\ref{extremark}), by adding a term $\Phi_1$, where $\Phi_1$ is the homogeneous cubic corresponding to the symmetric trilinear form $C_1$, only nonzero on the $\bigwedge ^2W_2\spcheck$ summand, defined by
$$C_1(\xi_1, \xi_2, \xi_3) = \xi_1 \wedge  \xi_2\wedge \xi_3 \in \bigwedge ^6W_2\spcheck \cong \Cee.$$ 
Using the explicit description of the weight spaces and the forms involved, one can then check that, over $\Cee$ and with some care as to the scalings of the weight spaces, the cubic $\Phi_0 + \Phi_1$ is in fact the Cartan cubic.

\section{Concluding remarks}\label{sectconclude}
In this section, we continue to restrict to the weight three tube domain case. Our goal is to identify interesting Hodge theoretical loci in terms of the Hermitian symmetric space $\mathcal{D}$ or the projective geometry of the cubic $\varphi$.

\subsection{The intermediate Jacobian locus} The most interesting locus of Hodge tensors from a geometric point of view is that where the intermediate Jacobian $JV = (V^{3,0} \oplus V^{2,1})/\Lambda$ is isogenous to a product $J_1\times J_2$ where $J_1$ is a polarized abelian variety. Equivalently, there is a symplectic direct sum decomposition over $\Q$: $V=V_1\oplus V_2$, where $V_1$ and $V_2$ are nonzero sub-Hodge structures of $V$ with $V_1^{3,0} = 0$. We shall refer to the set of all such points in $\calD$ as the \textsl{intermediate Jacobian locus}.

\begin{lemma}
Let $\calD\hookrightarrow \bD$ be a horizontal subvariety of a period domain $\bD$ which is of Hermitian type (cf.\ Definition \ref{defherm}). Then the locus $Z\subset \calD$ of points for which the associated Hodge structure is decomposable is a union  of Hermitian symmetric sub-domains each embedded holomorphically and equivariantly into $\calD$  (and thus horizontally into $\bD$).
\end{lemma}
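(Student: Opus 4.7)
My plan is to decompose the decomposability locus $Z$ into a countable union of Hodge loci, one for each candidate rational projector, and then identify each connected component of such a locus as a Mumford--Tate subdomain, which will inherit a Hermitian symmetric structure from $\calD$ via Deligne's criterion.

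First I would write $Z$ as a union over pairs of nonzero rational subspaces $V_1, V_2\subseteq V$ giving an orthogonal (with respect to the polarization) direct sum decomposition $V=V_1\oplus V_2$. For each such pair, let $\pi=\pi_{V_1}\in \End_{\bQ}(V)$ be the corresponding rational projector; then the locus
\[
Z_{V_1,V_2} \;=\; \{z \in \calD : V_1 \text{ and } V_2 \text{ are sub-Hodge structures at } z\}
\]
is precisely the Hodge locus of $\pi$, i.e.\ the set of $z\in\calD$ at which $\pi$ lies in the $(0,0)$-part of $\End(V)_z$. This is a closed complex analytic subset of $\calD$. Since there are only countably many admissible pairs $(V_1,V_2)$ (imposing also the open condition $V_1^{3,0}=0$), $Z$ is a countable union of such Hodge loci.

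Next, fixing $z_0\in Z_{V_1,V_2}$, let $G_\pi:=Z_G(\pi)\subseteq G$ be the centralizer of $\pi$ in $G$, which is a reductive $\bQ$-algebraic subgroup since $\pi$ is a semisimple element of $\End(V)$. By the Mumford--Tate subdomain theory developed in \cite[Chapter IV]{mtbook}, the connected component of $Z_{V_1,V_2}$ through $z_0$ coincides with the $G_\pi(\bR)^+$-orbit of $z_0$, naturally identified with $G_\pi(\bR)^+/K_\pi$ for $K_\pi:=K\cap G_\pi(\bR)$. Thus each component of each $Z_{V_1,V_2}$ is a homogeneous space for a real reductive Lie group, equivariantly and holomorphically embedded in $\calD$, and hence horizontally in $\bD$.

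The key step is to verify that each such orbit is in fact Hermitian symmetric. Since $\calD\hookrightarrow\bD$ is of Hermitian type, the Hodge cocharacter $h_0\colon U(1)\to G(\bR)$ at $z_0$ satisfies Deligne's criterion (cf.\ \cite{dshimura}, \cite[Chapter IV]{mtbook}): $\theta:=\mathrm{Ad}(h_0(\sqrt{-1}))$ is a Cartan involution of $G^{\mathrm{ad}}(\bR)$, and $h_0$ takes values in the center of $K$. Because $\pi$ is a Hodge class at $z_0$, it commutes with $h_0$, so $G_\pi$ is $\theta$-stable. The standard fact that a Cartan involution restricts to a Cartan involution on any $\theta$-stable reductive subgroup then shows that $\theta|_{G_\pi(\bR)}$ is a Cartan involution with fixed subgroup $K_\pi$, and that $h_0$ factors through $K_\pi$ with central image. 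Applying Deligne's criterion to the pair $(G_\pi,h_0)$ then gives that $G_\pi(\bR)^+/K_\pi$ is Hermitian symmetric, completing the argument.

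The main obstacle is precisely the last paragraph: one must simultaneously control the reductive structure of $G_\pi$, its stability under $\theta$, and the centrality of $h_0$ in $K_\pi$, in order to bring Deligne's characterization to bear. Once this is in place, the conclusion is direct; the remaining bookkeeping---countability of the indexing set, finiteness of the components of each $Z_{V_1,V_2}$, and horizontality of the embeddings---is routine.
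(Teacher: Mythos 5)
Your proposal is correct and takes essentially the same route as the paper's proof: you express decomposability through rational Hodge tensors (your projectors $\pi$), so that $Z$ becomes a countable union of Noether--Lefschetz loci whose components are Mumford--Tate subdomains equivariantly and holomorphically embedded in $\calD$ (the paper cites \cite[Theorem II.C.1]{mtbook} for exactly this identification), with horizontality inherited from $\calD\hookrightarrow\bD$. The only difference is cosmetic: where the paper concludes Hermitian symmetry by noting that the induced VHS on the subdomain satisfies Griffiths transversality and invoking Deligne's theorem as in the proof of Theorem~\ref{thmalgebraic}, you verify Deligne's group-theoretic criterion directly, checking that $G_\pi=Z_G(\pi)$ is $\theta$-stable and reductive and that $h_0$ remains central in $K_\pi$ --- a valid, slightly more self-contained execution of the same step.
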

\begin{proof}
Since the decomposability of the Hodge structure can be expressed in terms of the existence of special Hodge tensors, $Z$ will be a union of Noether--Lefschetz loci in the sense of \cite[\S II.C]{mtbook}.  Each component of a Noether--Lefschetz locus coincides with a component of some Mumford--Tate domain $D_{M'}^\circ$ (\ \cite[Theorem  II.C.1]{mtbook}). Clearly,  the embedding $D_{M'}^\circ\subset \calD$ is holomorphic and horizontal and it corresponds to a specialization of Mumford--Tate groups, i.e.\ if $M$ is the generic Mumford--Tate corresponding to $\calD$, then $M'\subseteq M$ and $M'$ is the Mumford--Tate  group of some special point (with respect to the rational structure) in $\calD$. Since $\calD\hookrightarrow \bD$ is a horizontal embedding, $D_{M'}^\circ\hookrightarrow \bD$ is also horizontal. Thus, as already argued in the proof  of Theorem \ref{thmalgebraic}, it follows (from \cite{dshimura}, \cite[Theorem  7.9]{milne}) that  $D_{M'}^\circ$ is in fact a Hermitian symmetric domain with a holomorphic, horizontal and equivariant embedding in $\bD$. 
\end{proof}

According to the lemma, the possible intermediate Jacobian loci in our situation, i.e.\ Hermitian VHS $\calV$ of CY type over $\calD$,  are given by subdomains $\calD'\hookrightarrow \calD$. 
  Conversely, suppose that $\mathcal{D}' \subseteq \mathcal{D}$ is a positive dimensional subdomain, corresponding to an inclusion of $\Q$-algebraic groups $G'\subseteq G$. If the restriction of the representation $\rho\colon G \to \Sp(V)$ is a reducible representation of $G'$ over $\Q$, then the variation of Hodge structure over $\mathcal{D}'$ splits over $\Q$ and the corresponding intermediate Jacobians acquire abelian variety factors. The possibilities for $\mathcal{D}' \subseteq \mathcal{D}$ have been tabulated by Satake \cite{satake}  and Ihara \cite{ihara} as  we have already  mentioned  (e.g.\ Remark \ref{remmaxvar}). An interesting example in this set-up is the case of the embedding of the exceptional domains $\EIII$ (associated to $E_6$) into $\EVII$ (associated to $E_7$) discussed in \S\ref{e6e7}. In this example the restriction of the VHS of weight $3$ CY type over $\EVII$  with Hodge numbers $(1,27,27,1)$ will decompose over $\EIII$ into a VHS of CY type with Hodge numbers $(1,26,26,1)$ and a Tate twist of a VHS of elliptic curve type. In fact, one can see that both VHS will have weak CM by the same imaginary quadratic field $\bQ[\sqrt{-d}]$ (compare to Section \ref{cmsect}, especially Corollary \ref{corcm}). Note that it is possible, in the above notation, for the restriction of the representation $\rho$ to $G'$ to remain irreducible. For example, the restriction of the VHS over the $\EVII$ domain to the subdomain of type $\I_{2,6}$ remains irreducible over $\R$, and so the $\I_{2,6}$ subdomain is not part of the intermediate Jacobian locus. Nonetheless, this case seems somewhat atypical.

While all of the intermediate Jacobian loci in $\calD\hookrightarrow \bD$ of Hermitian type are given by subdomains $\calD'$,  to classify all the possibilities, it does not suffices to consider the groups $G$ and $G'$ (notation as in the preceding paragraph). Instead one has to consider the full (generic) Mumford--Tate groups $M$ and $M'$ and to analyze the possible specializations $M'\subset M$ (recall that $G$ is the derived subgroup of $M$). Even when starting with a Hermitian VHS satisfying our convention \ref{conventionired}, its restriction  to intermediate Jacobian loci (or more generally Noether--Lefschetz loci) typically will not in general satisfy \ref{conventionired}.  For instance, note that one of the simplest types of Noether--Lefschetz sublocus is that cut out by endomorphisms or equivalently Hodge tensors of height $2$ (see \cite[p.\ 52-53]{mtbook}). This leads to the consideration of Hodge structures with weak real or complex multiplication as discussed in Section \ref{cmsect}, which in turn leads to involved Galois theoretic arguments (see \cite[Chapter VI, e.g.\ \S VI.D]{mtbook}) whose analysis would take us too far afield. Here we will only make some remarks from the perspective of equations defining the horizontal subvariety and apply this to the $1$-dimensional case (i.e.\ $h^{2,1}=1$).  The $1$-dimensional case can be understood also from a group theoretic perspective via the results of Green--Griffiths--Kerr (esp. \cite[Theorem  VII.F.1]{mtbook}). 

\begin{remark}
Note that the consideration of Mumford--Tate groups allows one to view also points $z_0\in \bD$ as being of Hermitian type (in the sense considered in this paper). Namely, we say that a point $z_0\in \bD$ is of Hermitian type iff the Mumford--Tate group of $z_0$ is abelian, i.e.\ a torus (since it is connected),  and thus $G$ will be trivial. Equivalently, the Hodge structure associated to $z_0$ will be of CM type (see \cite[\S V.B]{mtbook}). 
\end{remark}
\subsubsection{The equations cutting out the Noether--Lefschetz locus of type $E\times J_2$} Let us consider the very concrete case where the abelian variety factor $J_1$ is an elliptic curve $E$. In this case, given two vectors $\alpha, \beta \in V(\Q)$ such that $\langle \alpha, \beta \rangle \neq 0$, the condition that $\alpha$ and $\beta$ span a two-dimensional sub-Hodge structure of $V$ is the condition that $\alpha \wedge \beta$ is orthogonal to $F^4\bigwedge^2V$ for the symmetric pairing $\langle \cdot, \cdot \rangle$ on $\bigwedge^2V$ induced by the symplectic form on $V$. Note that
$$F^4\bigwedge^2V = (V^{3,0}\otimes V^{2,1}) \oplus (V^{3,0}\otimes V^{1,2})\oplus \bigwedge ^2 V^{2,1}.$$
We will also want the condition that the non-zero Hodge numbers occurring in $\operatorname{span}\{\alpha, \beta\}$ are $h^{2,1}$ and $h^{1,2}$ (rather than $h^{3,0}$ and $h^{0,3}$).

\begin{lemma}\label{lemmaij} 
Let $\omega$ be a generator for $V^{3,0}$. Then the following conditions are equivalent:
\begin{enumerate}
\item[\rm(i)] $\alpha \wedge \beta$ is orthogonal to $F^4\bigwedge^2V$ and the non-zero Hodge numbers occurring in $\operatorname{span}\{\alpha, \beta\}$ are $h^{2,1}$ and $h^{1,2}$.
\item[\rm(ii)]  $\langle \alpha, \omega \rangle = \langle \beta, \omega \rangle =0$ and $\alpha \wedge \beta$ is orthogonal to $\bigwedge ^2 V^{2,1}$. 
 \item[\rm(iii)]   $\langle \alpha, \omega \rangle = \langle \beta, \omega \rangle =0$ and $V^{2,1} \cap (\Cee \cdot \alpha + \Cee \cdot \beta) \neq \{0\}$. 
 \end{enumerate}
\end{lemma}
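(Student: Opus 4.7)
The plan is to unpack the Hodge structure on $\bigwedge^2 V$, reduce the first half of (i) to the statement that $\alpha\wedge\beta\in H^{3,3}$, and then translate this into the three different formulations. First I would observe that, since the symplectic form on $V$ pairs $V^{p,q}$ non-trivially only with $V^{3-p,3-q}$, the induced symmetric form on $\bigwedge^2 V$ pairs the Hodge piece $H^{p,q}=\bigoplus_{p_1+p_2=p,\,q_1+q_2=q}V^{p_1,q_1}\wedge V^{p_2,q_2}$ non-trivially only with $H^{6-p,6-q}$. Since $H^{6,0}=\bigwedge^2V^{3,0}=0$, the orthogonal complement of $F^4\bigwedge^2V=H^{5,1}\oplus H^{4,2}$ is $F^3\bigwedge^2V$, and the reality of $\alpha\wedge\beta$ forces any real element of $F^3\bigwedge^2V$ to lie in $H^{3,3}=(V^{3,0}\otimes V^{0,3})\oplus (V^{2,1}\otimes V^{1,2})$.

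Next I would analyze the $2$-dimensional subspace $W_\Cee=\operatorname{span}_\Cee\{\alpha,\beta\}$: since $\alpha\wedge\beta$ generates $\bigwedge^2 W_\Cee$, the condition $\alpha\wedge\beta\in H^{3,3}$ combined with the reality of $W$ forces $W_\Cee$ to be bi-graded, so $W$ is a sub-Hodge structure. The only two options are $W_\Cee=V^{3,0}\oplus V^{0,3}$ (Hodge numbers $(1,0,0,1)$, in which case $\alpha\wedge\beta$ is a multiple of $\omega\wedge\bar\omega\in V^{3,0}\otimes V^{0,3}$) or $W_\Cee\subseteq V^{2,1}\oplus V^{1,2}$ (Hodge numbers $(0,1,1,0)$). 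The extra requirement in (i) excludes the first case and places us in the second. Using $(V^{3,0})^\perp=F^1V=V^{3,0}\oplus V^{2,1}\oplus V^{1,2}$ together with reality gives $\alpha\in V^{2,1}\oplus V^{1,2}\iff \langle\alpha,\omega\rangle=0$ (and the same for $\beta$). Once $\alpha,\beta\in V^{2,1}\oplus V^{1,2}$, the component of $\alpha\wedge\beta$ in $\bigwedge^2 V^{1,2}$ vanishes iff $\alpha\wedge\beta\perp \bigwedge^2V^{2,1}$ (the non-trivial pairing being between these two), and by complex conjugation the $\bigwedge^2 V^{2,1}$-component then vanishes as well, so $\alpha\wedge\beta\in V^{2,1}\otimes V^{1,2}\subset H^{3,3}$. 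This proves (i)$\iff$(ii).

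Finally, for (ii)$\iff$(iii), assuming $\langle\alpha,\omega\rangle=\langle\beta,\omega\rangle=0$, I would decompose $\alpha=\alpha_++\alpha_-$ and $\beta=\beta_++\beta_-$ according to $V^{2,1}\oplus V^{1,2}$, with $\alpha_-=\overline{\alpha_+}$ and $\beta_-=\overline{\beta_+}$ by reality. The $\bigwedge^2 V^{1,2}$-component of $\alpha\wedge\beta$ is $\alpha_-\wedge\beta_-$, whose vanishing is equivalent (by pairing with $\bigwedge^2 V^{2,1}$ and by conjugation) to the linear dependence of $\alpha_+$ and $\beta_+$ in $V^{2,1}$, which in turn is equivalent to the existence of a non-trivial $\Cee$-linear combination $c_1\alpha+c_2\beta$ whose $V^{1,2}$-component vanishes --- i.e. a non-zero element of $V^{2,1}\cap(\Cee\alpha+\Cee\beta)$. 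The main obstacle in writing this up cleanly is really the bookkeeping in tracking the Hodge components of $\bigwedge^2V$ and the various orthogonality and reality constraints; once the identification $(F^4\bigwedge^2V)^\perp=F^3\bigwedge^2V$ is in hand, the three formulations are just three different views of the same vanishing condition on $\alpha\wedge\beta$.
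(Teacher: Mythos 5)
Your proof is correct, but it is organized rather differently from the paper's. The paper's proof is a direct pairing computation: using the displayed decomposition $F^4\bigwedge^2V=(V^{3,0}\otimes V^{2,1})\oplus(V^{3,0}\otimes V^{1,2})\oplus\bigwedge^2V^{2,1}$ stated just before the lemma, it checks via $\langle a\wedge b,\,c\wedge d\rangle=\langle a,c\rangle\langle b,d\rangle-\langle a,d\rangle\langle b,c\rangle$ that $\langle\alpha,\omega\rangle=\langle\beta,\omega\rangle=0$ kills the pairing of $\alpha\wedge\beta$ with the two $V^{3,0}\otimes(\cdot)$ summands, so that (ii) yields orthogonality to all of $F^4\bigwedge^2V$; the converse implications are noted to be immediate, and (ii)$\iff$(iii) is declared ``straightforward.'' You instead take a structural route: identify $(F^4\bigwedge^2V)^\perp=F^3\bigwedge^2V$, use reality to force $\alpha\wedge\beta\in H^{3,3}$, and then classify the possible two-dimensional real sub-Hodge structures ($V^{3,0}\oplus V^{0,3}$ versus a subspace of $V^{2,1}\oplus V^{1,2}$). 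This buys you something the paper only asserts in the paragraph preceding the lemma, namely that orthogonality of $\alpha\wedge\beta$ to $F^4\bigwedge^2V$ is equivalent to $\operatorname{span}\{\alpha,\beta\}$ being a sub-Hodge structure; and your explicit treatment of (ii)$\iff$(iii) via the decomposition $\alpha=\alpha_++\overline{\alpha_+}$, $\beta=\beta_++\overline{\beta_+}$ fills in what the paper leaves to the reader.

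Two points deserve attention in a write-up. First, the one step you assert without argument --- that $\alpha\wedge\beta\in H^{3,3}$ forces $W_\Cee$ to be bi-graded --- is true but needs a line: since $\alpha\wedge\beta$ is of pure type $(3,3)$ in the weight-six Hodge structure on $\bigwedge^2V$, and $W_\Cee=\{v\in V_\Cee : v\wedge\alpha\wedge\beta=0\}$, the component of $v\wedge\alpha\wedge\beta$ of type $(p+3,q+3)$ is exactly $v^{p,q}\wedge\alpha\wedge\beta$, so each Hodge component of any $v\in W_\Cee$ again lies in $W_\Cee$ (reality of $W$ is in fact not needed for this step, only for narrowing down the two cases). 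Second, at several places --- in producing a \emph{nonzero} element of $V^{2,1}\cap(\Cee\alpha+\Cee\beta)$ from the linear dependence of $\alpha_+,\beta_+$, and in concluding that both $h^{2,1}$ and $h^{1,2}$ actually occur --- you implicitly use the standing hypothesis from the paragraph before the lemma that $\langle\alpha,\beta\rangle\neq0$, so that $\alpha,\beta$ are linearly independent and $\alpha\wedge\beta\neq0$; the paper's proof makes the same implicit use, so this is not a gap, merely worth flagging.
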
 
\begin{proof} Clearly, if $\langle \alpha, \omega \rangle = \langle \beta, \omega \rangle =0$, then $\alpha \wedge \beta$ is orthogonal to $(V^{3,0}\otimes V^{2,1}) \oplus (V^{3,0}\otimes V^{1,2})$, so that if $\alpha \wedge \beta$ is orthogonal to $\bigwedge ^2 V^{2,1}$, it is orthogonal to $F^4\bigwedge^2V$ as well. Conversely, if $\alpha \wedge \beta$ is orthogonal to $F^4\bigwedge^2V$, it is orthogonal to $\bigwedge ^2 V^{2,1}$. Moreover, the condition that the non-zero Hodge numbers occurring in $\operatorname{span}\{\alpha, \beta\}$ are $h^{2,1}$ and $h^{1,2}$ implies that    $\langle \alpha, \omega \rangle = \langle \beta, \omega \rangle =0$. This shows the equivalence of the first two conditions, and the equivalence of the second and third is also straightforward.
\end{proof}

\subsubsection{The $1$-dimensional case} To illustrate the above discussion, we consider the $1$-dimensional case, i.e.\  $h=\dim V^{2,1} =1$. Let $\calV$ be a irreducible Hermitian VHS  of weight $3$ and CY type over the upper half-plane $\fH$. 
In this set-up, there are two distinct horizontal embeddings of $\fH$ into the period domain $\bD$, one corresponding to $\Sym^3 W$, where $W$ is the Hodge structure of an elliptic curve (Corollary \ref{thmclassify3}(ii)), and one corresponding to the complex ball (Corollary \ref{thmclassify3}(iii), case $\I_{1,n}$ for $n=1$). We will discuss only the first case here (the second case is similar but simpler). Thus, we assume $\fH\hookrightarrow \bD$ via   $\Sym^3$ of the standard representation of $\SL(2)$. Then one can show that

\begin{proposition}\label{claimnl1} With notation as above. The special points of $\fH$ correspond precisely to the case when the Hodge structure $V$ acquires weak CM by an imaginary quadratic field $\bQ[\sqrt{-d}]$ (see \S\ref{cmsect}).  
\end{proposition}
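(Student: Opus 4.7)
The plan is to analyze the Mumford--Tate group $M(W)$ of the weight-one Hodge structure $W = H^1(E_\tau)$, from which the weight-three Hodge structure $V = \Sym^3 W$ is built. A point $\tau \in \fH$ is special precisely when $M(W)$ is a torus, and for elliptic curves this is equivalent to $E_\tau$ having CM by an order in an imaginary quadratic field, in which case $M(W) \cong \Res_{K/\bQ}\Gm$ with $K = \bQ[\sqrt{-d}]$.

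For the forward direction, suppose $E_\tau$ has CM by $K$. Over $\bC$, the torus $M(W)_\bC \cong \Gm \times \Gm$ splits $W_\bC = W_+ \oplus W_-$ into conjugate eigenlines of Hodge types $(1,0)$ and $(0,1)$ for the two characters $\sigma,\bar\sigma$ of $K$. Consequently,
$$V_\bC = \bigoplus_{k=0}^{3} V_k, \qquad V_k := \Sym^k W_+ \otimes \Sym^{3-k} W_-,$$
with $V_k$ of Hodge type $(k,3-k)$ and on which $M(W)_\bC$ acts by the character $\sigma^k\bar\sigma^{3-k}$. Complex conjugation swaps $V_k$ with $V_{3-k}$, so the coarser splitting $V_\bC = (V_0\oplus V_3)\oplus(V_1\oplus V_2)$ is Galois-stable and descends to a $\bQ$-decomposition $V = V'\oplus V''$ into sub-Hodge structures of types $(3,0)+(0,3)$ and $(2,1)+(1,2)$ respectively. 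On each summand the pair of characters of $M(W)_\bC$ forms a non-trivial Galois-conjugate pair, so the $\bQ$-subalgebra of $\End_\bQ(V')$ (resp.\ $\End_\bQ(V'')$) generated by the image of $K^\times = M(W)(\bQ)$ is a two-dimensional \'etale $\bQ$-algebra which splits over $K$ with non-trivial Galois action, hence is isomorphic to $K$ itself. Combining the two embeddings diagonally yields $K \hookrightarrow \End_{\Hg}(V)$, i.e., weak CM by $K$.

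For the converse, suppose $V$ has weak CM by an imaginary quadratic field. The representation $\Sym^3 \colon \GL(W) \to \GL(V)$ is a closed immersion (its kernel on $\GL_2$ is trivial), so the Mumford--Tate group $M(V)$ of $V$ is precisely the image $\Sym^3(M(W))$, and $\End_{\Hg}(V) = \End_{M(V)}(V) = \End_{M(W)}(V)$. If $M(W)$ contained $\SL(W) = \SL_2$, then $V$ would be absolutely irreducible as an $M(W)$-module (the third symmetric power of the standard $\SL_2$-representation being the unique irreducible $4$-dimensional one), so Schur's lemma would force $\End_{\Hg}(V) = \bQ$, contradicting the weak CM hypothesis. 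Hence $M(W)$ is a proper connected reductive $\bQ$-subgroup of $\GL(W) = \GL_2$ containing a non-central Hodge cocharacter $h$, which forces $M(W)$ to be a two-dimensional maximal torus. This torus is necessarily $\bR$-anisotropic (since $h_\bR$ acts on $W_\bR$ without real eigenlines), hence $\bQ$-anisotropic, so $M(W) \cong \Res_{K'/\bQ}\Gm$ for some imaginary quadratic field $K'$; this identifies $E_\tau$ as having CM and $\tau$ as a special point.

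The main obstacle, at least in the forward direction, is the descent step showing that the $\bQ$-algebra generated by $M(W)(\bQ)$ acting on each summand $V'$, $V''$ is genuinely the imaginary quadratic field $K$ and not a split \'etale algebra; this rests on the pairwise distinctness of the characters $\sigma^k\bar\sigma^{3-k}$ for $k=0,\dots,3$ and on the fact that complex conjugation acts on them non-trivially, which in turn reflects the anisotropy of $\Res_{K/\bQ}\Gm$.
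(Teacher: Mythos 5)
Your proof is correct in substance, but it takes a genuinely different route from the one printed in the paper. The paper's proof is the explicit-coordinates one: writing $\omega(z) = -cz^3e_0 + 3cz^2e_1 + zf_1 + f_0$, it shows that two rational vectors $\alpha,\beta$ with $\langle\alpha,\omega\rangle = \langle\beta,\omega\rangle = 0$ force an equation $zP(z)=0$ with $P$ a rational quadratic, so $z$ is imaginary quadratic; conversely, from an irreducible rational quadratic $P$ with root $z\in\fH$ it manufactures $\alpha,\beta$ from the cubics $(t+r_1)P(t)$, $(t+r_2)P(t)$, with $\langle\alpha,\beta\rangle$ a nonzero multiple of $r_1-r_2$. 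Your argument is instead the Mumford--Tate/representation-theoretic one, which the paper's preceding Remark only sketches by citing \cite[Theorem VII.F.1, cases (v) and (xii)]{mtbook}; you have in effect made that remark self-contained: the character decomposition $V_\bC=\bigoplus_{k}\Sym^kW_+\otimes\Sym^{3-k}W_-$, descent along the two $\Gal$-orbits of characters to $V=V'\oplus V''$, and the computation that the commutant is $K\times K$ (hence contains $K$ diagonally) replace the symplectic linear algebra. Your route has the advantage of producing intrinsically and simultaneously the splitting $V'\oplus V''$ (the intermediate Jacobian decomposition of the surrounding subsection, cf.\ Lemma \ref{lemmaij}) and the weak CM structure; the paper's route buys exactly what the authors say they want, namely an illustration of the equations-based method of Sections \ref{sectequations}--\ref{sect5}, which moreover still makes sense for transcendental $\varphi$ (see the remark following their proof).

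Two inaccuracies in your write-up, neither fatal. First, $\Sym^3\colon \GL(W)\to\GL(V)$ is not a closed immersion: its kernel is the central $\mu_3$, since $\Sym^3 g=\Id$ forces both eigenvalues of $g$ to equal a common cube root of unity. This does not affect what you actually use --- $\MT(V)$ is still the image of $\MT(W)$, and $\End_{\MT(V)}(V)=\End_{\MT(W)}(V)$ because the kernel is central. Second, in the converse, $M(W)$ is \emph{not} $\bR$-anisotropic: it contains the weight homothety $\Gm$, and indeed $\Res_{K'/\bQ}\Gm$ itself contains a split central $\Gm$, so your stated chain ``$\bR$-anisotropic, hence $\bQ$-anisotropic, so $M(W)\cong\Res_{K'/\bQ}\Gm$'' is internally inconsistent. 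What your ``no real eigenlines'' observation actually shows is that $M(W)_\bR$ is non-split (equivalently, anisotropic modulo the center): $h(i)\in M(W)(\bR)$ has no real eigenvalue, whereas every element of an $\bR$-split torus in $\GL(W_\bR)$ is $\bR$-diagonalizable. Non-splitness over $\bR$ rules out both the split maximal torus and $\Res_{F/\bQ}\Gm$ for $F$ real quadratic, leaving $M(W)\cong\Res_{K'/\bQ}\Gm$ with $K'$ imaginary quadratic, which is the conclusion you want.
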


\begin{remark}
From the point of view of Mumford--Tate domains, it is easy to see which of the points of $\fH\hookrightarrow \bD$ are special in the sense discussed above. Specifically, since the generic Hodge group in this situation is $\SL(2)$, the only possibility for Hodge groups at special points of $\fH$ is $\U(1)$.  Then the proposition follows from \cite[Theorem  VII.F.1]{mtbook} (cases (v) and (xii)). To illustrate our approach, we give another proof  based on  the explicit description of the embedding $\fH\hookrightarrow \bD$ given by \eqref{eqalld}. 
\end{remark}
\begin{proof}[Proof of \ref{claimnl1}] The condition of Lemma \ref{lemmaij} that $\alpha \wedge \beta$ is orthogonal to $\bigwedge ^2 V^{2,1}$ is vacuous. In this case, we can write the variation of the generator of $V^{3,0}$ as $\omega(z) = -cz^3e_0 + 3cz^2 e_1 + zf_1 + f_0$ for $z$ in the upper half plane, where we assume that $c\in \Q$, $c\neq 0$. Setting 
\begin{align*}
\alpha &= \alpha_0e_0 + \alpha_1e_1 + \alpha_1'f_1 + \alpha_0' f_0;\\
\beta &= \beta_0e_0 + \beta_1e_1 + \beta_1'f_1 + \beta_0' f_0
\end{align*}
leads to two equations
\begin{align*}
 -c\alpha_0'z^3 + 3c\alpha_1'z^2  - \alpha_1 z - \alpha_0 &=0;\\
 -c\beta_0'z^3 + 3c\beta_1'z^2  - \beta_1 z - \beta_0 &=0,
\end{align*}
where the coefficients are rational, and hence after possibly eliminating the constant term to an equation of the form $zP(z) = 0$, where $P(z)$ is a quadratic polynomial with rational coefficients. Thus $z$ lies in an imaginary quadratic field. Conversely, if $P(t)$ is an irreducible quadratic polynomial with rational coefficients and $z$ is a root of $P$ in the upper half plane, choose two rational numbers $r_1,r_2$ and consider the cubic polynomials $Q_1(t) = (t+r_1)P(t)$ and $Q_2(t) = (t+r_2)P(t)$. The coefficients of $Q_1, Q_2$ determine rational  vectors $\alpha, \beta$ such that $\langle \alpha, \omega \rangle = \langle \beta, \omega \rangle =0$, and a computation shows that, up to a factor of $\frac13$, $\langle \alpha, \beta \rangle = r_1-r_2$. Hence, if $r_1$ and $r_2$ are distinct, we produce a two dimensional subspace of $V(\Q)$ with the desired properties. Here, the choice of $r_1$ and $r_2$ is essentially equivalent to the choice of a $\Q$-basis for the span of $\alpha$ and $\beta$. 
\end{proof}

\begin{remark} In the non-algebraic case of Equation~\ref{eqalld},   $\omega(z)$ is  given by 
$$
\omega(z) = \left(\varphi(z) - \frac12  z \frac{d \varphi}{d z}\right)e_0 + \frac12\frac{d \varphi}{d z } e_1 +  z f_1 + f_0,
$$
where $\varphi$ is only assumed to be a holomorphic function of $z$. The equations $\langle \alpha, \omega \rangle = \langle \beta, \omega \rangle =0$ become two equations of the form
\begin{align*}
c_1\varphi(z) + \ell_1(z)\frac{d \varphi}{d z} + m_1(z) &=0;\\
c_2\varphi(z) + \ell_2(z)\frac{d \varphi}{d z} + m_2(z) &=0,
\end{align*}
where the $c_i\in \Q$ and the $\ell_i, m_i$ are degree one polynomials of $z$ with coefficients in $\Q$. In particular, both $\varphi(z)$ and $d\varphi(z)/dz$ lie in $\Q(z)$ (and are of a very special form there). In case $\varphi$ is a cubic and $z$ is imaginary quadratic, the existence of one equation of the above type leads to a second such. It would be interesting to find examples of transcendental functions $\varphi$ and points $z$ which have analogous properties.
\end{remark}

\begin{remark}
Green--Griffiths--Kerr \cite[\S VII.F]{mtbook} (especially Theorem VII.F.1) discuss in detail the Noether--Lefschetz loci in the case of weight $3$ CY Hodge structures with $h^{2,1}=1$. Specifically, the cases (iii) and (v) of \cite[Theorem  VII.F.1]{mtbook} correspond  to two irreducible case of Hermitian VHS with $h^{2,1}=1$ in our classification: the ball case and the $\Sym^3$ case respectively. The cases (viii)--(xii) of \cite[Theorem  VII.F.1]{mtbook} classify the possible Mumford--Tate groups for Hodge structures that decompose as $V=V_1\oplus V_2$. Since we are considering Noether--Lefschetz subloci of Hermitian symmetric domains, the only relevant cases for us are (xi) and (xii). The most interesting situation, the case (v) specializing to the case (xii), was discussed above (Proposition \ref{claimnl1}). For some geometric situations that lead to these two cases (essentially some special cases of the Borcea--Voisin construction) we refer the reader to \cite[p. 201-202]{mtbook}.
\end{remark}

\subsection{The Baily-Borel compactification and monodromy}
We keep the notation of \S\ref{ssectube}, so that $V =\underline{\Cee}\oplus W\oplus W\spcheck \oplus \underline{\Cee}$ and $B\colon \operatorname{Sym}^2 W \to W\spcheck$ is the bilinear form. We begin by analyzing the following general situation: $\sV$ is a weight three $\Q$-VHS of CY type over $\Delta^*$ with general fiber $V$ is a complex vector space with a rational structure and a nondegenerate symplectic form defined over $\Q$.  Let  $N= \log T$, where $T\in G(\Zee)$ is the monodromy matrix, which we assume to be unipotent. In particular, $N$ is a rational nilpotent matrix preserving the symplectic form such that $N^4= 0$,. Let $F^\bullet$ be the limiting Hodge filtration and  $W_\bullet$ be the corresponding monodromy weight filtration. Thus $(V, F^\bullet, W_\bullet)$ is a mixed Hodge structure satisfying the usual properties: $N\colon W_k/W_{k-1} \to W_{k-2}/W_{k-3}$ is a morphism of Hodge structures of type $(-1, -1)$, and $N^k\colon W_{3+k}/W_{3+k-1} \to W_{3-k}/W_{3-k-1}$ is an isomorphism. Note that $N^4=0$. 
\begin{definition}
We shall say that the limiting mixed Hodge structure is \textsl{of Type IV} if $N^3\neq 0$, \textsl{of Type III} if $N^3 =0$, $N^2 \neq 0$, and \textsl{of Type II} if $N^2 =0$, $N\neq 0$. (Type I is the condition that $N=0$.) 
\end{definition}
We discuss the various possibilities below, first in general and then for the Hermitian symmetric tube domain case.  Here, we shall use the fact (Remark~\ref{uniremark}) that, in the notation of Section 5,  every nilpotent $N$ of the type we are considering is conjugate in $G(\Cee)$ to an element of   $\mathfrak{p}_-$ (or equivalently that every unipotent $T\in G(\R)$ is conjugate in $G(\Cee)$ to an element of  $P_-$) to analyze the  possibilities for the monodromy weight filtration over $\Cee$ and relate them to the form $B$. Of course, a more careful analysis would proceed via the fine structure of Hermitian symmetric spaces and would give information about the real structure. For  the three classical cases, and especially for  $\Sp(6, \R)$ and $\SU(3,3)$,  one can work out all of the statements below directly.

\medskip
\noindent \textbf{Type IV:} $N^4= 0, N^3\neq 0$ (or \textsl{maximal unipotent monodromy}). The Calabi--Yau condition implies that $F^3 \to W_6/W_5$ is an isomorphism and that $W_5/W_4 = 0$. In particular, $N$ has a unique Jordan block of length $4$ and no Jordan block of the length $3$. The remaining Jordan blocks must be of length $2$ or $1$. In terms of the weight filtration, this says that $W_4/W_3$ is a Hodge structure of pure type $(2,2)$ and  $N\colon W_4/W_3 \to W_2/W_1$ is an isomorphism of type $(-1, -1)$, so that $W_2/W_1$ is pure of type $(1,1)$. This leaves open the possibility that $W_3/W_2$ is nonzero, whose nonzero summands are of Hodge type $(2,1)$ and $(1,2)$, corresponding to length $1$ Jordan blocks of $N$. It is easy to see that there are no length $1$ Jordan blocks $\iff$ $W_3=W_2$ $\iff$ $N$ induces an isomorphism from $W_4/W_2$ to $W_2/W_0$.

In the Hermitian case, we can assume that $N=N_w$ corresponds to the action of $w$ on $V$. Using the explicit description of $N_w$, we see that Type IV corresponds to $C(w) = \langle B(w,w), w\rangle \neq 0$, and the condition that there are no length $1$ Jordan blocks is equivalent to:

\begin{condition} If $C(w) \neq 0$, then the linear map $B_w$ defined by $v\in W\mapsto B(w,v) \in W\spcheck$ is an isomorphism from $W$ to $W\spcheck$.
\end{condition}

This follows easily, in the $\EVII$ case,  from the fact that the only $\E_6$-invariant polynomials on the minuscule representation $W$ are polynomials in the Cartan cubic (here denoted $C$).

\medskip
\noindent \textbf{Type III:} $N^3= 0, N^2\neq 0$. In this case, $V= W_6=W_5$ and there is a length $3$ Jordan block, so that $W_5/W_4 \neq 0$. The Calabi--Yau condition then implies that $\dim W_5/W_4 = 2$ and that up to a Tate twist $W_5/W_4$ is of elliptic curve type (the nonzero summands are one-dimensional of type $(3,2)$ and $(2,3)$). Of course, a similar statement is true for $W_1/W_0 = W_1$, and, since $N^2\colon W_5/W_4 \to W_1$ is an isomorphism, $N$ induces an injection from $W_5/W_4$ to $W_3/W_2$. Again, the remaining Jordan blocks must be of length $2$ or $1$ and the Hodge structure on $W_4/W_3$ is of pure type $(2,2)$ and that on $W_2/W_1$ is pure of type $(1,1)$. Again, \textit{a priori} there could be length $1$ Jordan blocks of $N$ corresponding to the possibility that $W_3/W_2$ is strictly larger than the $2$-dimensional image of $N$. 

In the tube domain case, the condition $N^3= 0, N^2\neq 0$ is equivalent to $C(w) =0$ but $B(w,w)\neq 0$. The condition that there are no length $1$ Jordan blocks is equivalent to:

\begin{condition} Let $L$ be the hyperplane $\Ker B(w,w)\subseteq W$, let $L^*$ be the hyperplane $w^\perp =\{ \xi \in W\spcheck: \langle \xi, w\rangle = 0\}$, and let $B_w\colon L\to L^*$ be the linear map $B_w(v) = B(w,v)$. (Note that $w\in L$ and that, if $v\in L$, then $B_w(v)(w) = \langle B(w,v), w\rangle = \langle B(w,w), v\rangle =0$ by definition, so that $B_w(v)\in L^*$.) Then $B_w$ is an isomorphism from $L$ to $L^*$.
\end{condition}

In the $\EVII$ case, this also presumably reduces to a known fact about the Cartan cubic.

\medskip
\noindent \textbf{Type II:} $N^2= 0, N\neq 0$. In this case, $W_4 = V$, $W_3 = \Ker N$, and $W_2 =\operatorname{Im} N$. Of course, this holds in the Type I case also. There is no Jordan block of length $1$ $\iff$ $W_3=W_2$ $\iff$ $\Ker N = \operatorname{Im} N$. In the cases of interest to us, we will in fact have $W_3\neq W_2$, so the mixed Hodge structure will not be of Hodge--Tate type.

In the tube domain case, $N^2= 0, N\neq 0$ $\iff$ $B(w,w)= 0$ but $w\neq 0$. In this case, 
\begin{align*}
\operatorname{Im} N &= \{(0, tw, B(w,v), s): t, s\in \Cee, v\in W\};\\
\Ker N &= \{(0, v, \xi, s): v\in \Ker B_w, \xi \in w^{\perp}\}.
\end{align*}
Thus $\Ker N/\operatorname{Im} N \cong (\Ker B_w/\Cee\cdot w) \oplus (w^{\perp}/\operatorname{Im}B_w)$, where as before $B_w(v) = B(w,v)$, we have repeatedly used $B(w,w)= 0$, and the two summands are dual to each other. Note that, by the discussion of Remark~\ref{secants}, the image in $\Pee^{h-1}$ of the set of  $w\in W$ such that $B(w,w) =0$ is a single orbit under $K(\Cee)$, and hence the dimensions of the spaces $W_3$ and  $W_2$ are independent of the choice of $w$. 

The above does not specify the shape of the mixed Hodge structure. In general, we make the following definition:

\begin{definition}\label{defpl} Let $(V, F^\bullet, W_\bullet)$ be a weight three limiting  mixed Hodge structure  of Calabi--Yau type ($\dim F^3 = 1$) such that $N^2=0$, $N\neq 0$. We say that $(V, F^\bullet, W_\bullet)$ is \textsl{of Picard--Lefschetz type} if $W_4/W_3$ is of pure weight $(2,2)$, or equivalently if $W_3/W_2$ is of Calabi--Yau type. Note that, if $(V, F^\bullet, W_\bullet)$ is not of Picard--Lefschetz type, then $W_4/W_3$ is (up to a Tate twist) of $K3$ type. 
\end{definition}

In the case of interest to us, Picard--Lefschetz type does not arise (cf.\ the remark by Deligne in \S10 of \cite{bgross}):

\begin{lemma} In the Hermitian symmetric case, with $N^2=0$ but $N\neq 0$, the resulting limiting mixed Hodge structure is never of Picard--Lefschetz type.
\end{lemma}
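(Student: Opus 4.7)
The plan is to pin down the limiting Hodge filtration $F^\bullet_{\lim}$ explicitly in the complex basis of Section \ref{sect5} and then exhibit a nonzero vector in its image under $N$. The argument should be essentially a direct computation once the right coordinates are in place.

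First, by Remark \ref{uniremark} (and the conjugation in $G(\bR)$ it furnishes), I may assume that the monodromy $T=\exp N$ lies in the unipotent radical $U$ of a real parabolic subgroup fixing a $0$-dimensional boundary component of $\calD$. Under the identification $\mathfrak{u}_\Cee\cong \mathfrak{p}_-\cong W$ from \S\ref{ssectube}, this means $N=N_w$ for some $w\in W$. The hypothesis $N^2=0$, $N\neq 0$ then translates, via the explicit formula for $N_w^2$ given in \S\ref{ssectube}, to $B(w,w)=0$ and $w\neq 0$.

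Second, I would determine the general form of $F^\bullet_{\lim}$ arising from a VHS on $\Delta^*$ with monodromy $T=\exp N_w$. The affine chart of $\check\calD$ surrounding the chosen boundary point is parametrized by $w'\in W$ via $w'\mapsto \exp(N_{w'})(1,0,0,0)$, and the unipotent group $U(\Cee)$ acts by translations $w'\mapsto w'+w$. Applying Schmid's nilpotent orbit theorem to a VHS with monodromy $\exp N_w$ shows that the lifted period map has the form $\tilde\Phi(\zeta)=\exp(\zeta N_w)\,F^\bullet(\omega(\tilde h(s)))$, where $s=e^{2\pi i\zeta}$ and $\tilde h\colon \Delta\to W$ is holomorphic. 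Hence $F^\bullet_{\lim}=F^\bullet(\omega(w_0))$ where $w_0=\tilde h(0)\in W$, and its $(3,0)$-piece is spanned by
$$
\omega(w_0)\;=\;\bigl(1,\ w_0,\ \tfrac{1}{2}B(w_0,w_0),\ -\tfrac{1}{6}C(w_0,w_0,w_0)\bigr).
$$

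Third, the key computation: applying the explicit formula for $N_w$ from \S\ref{ssectube} to this generator yields
$$
N_w\,\omega(w_0)\;=\;\bigl(0,\ w,\ B(w,w_0),\ -\tfrac{1}{2}C(w_0,w_0,w)\bigr),
$$
whose $W$-component is exactly $w\neq 0$. Therefore $N_w\omega(w_0)\neq 0$, which means $F^3_{\lim}\not\subset \operatorname{Ker} N_w=W_3$. By the discussion of Type II LMHS in \S\ref{sectconclude} and Definition \ref{defpl}, this is equivalent to saying that $W_4/W_3$ has a nonzero $(3,1)$ component, so the LMHS is not of Picard--Lefschetz type.

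The main technical subtlety I expect is justifying that $F^\bullet_{\lim}$ really sits in the affine chart parametrized by $\omega$, i.e.\ that $w_0=\tilde h(0)$ is a genuine finite point of $W$ rather than a boundary limit of $\check\calD$. This is exactly the content of Schmid's nilpotent orbit theorem, which guarantees that the single-valued factor $\Psi(s)=\exp(-\zeta N)\tilde\Phi(\zeta)$ extends holomorphically across $s=0$. Once this is granted, the conclusion reduces to the one-line computation above, and crucially does not depend on any finer information about the cubic form $C$ or the specific Hermitian symmetric domain; in particular it gives a uniform treatment of all the Gross tube domain cases simultaneously.
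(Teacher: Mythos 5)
Your reduction of the lemma to the statement that the limiting filtration lies in the affine cell is correct as far as it goes, and your final computation $N_w\,\omega(w_0)=(0,w,B(w,w_0),-\tfrac12 C(w_0,w_0,w))\neq 0$ is right, as is the initial conjugation putting $N=N_w$. But the decisive step is exactly the one you wave at: the claim that $w_0=\tilde h(0)$ is a \emph{finite} point of $W$ is not ``exactly the content of Schmid's nilpotent orbit theorem.'' Schmid's theorem only says that $\Psi(s)=\exp(-\zeta N)\tilde\Phi(\zeta)$ extends holomorphically over $s=0$ with value in the compact dual $\check\calD$; the cell $U(\Cee)\cdot o$ is a proper Zariski-open subset of $\check\calD$, and nothing in the extension statement prevents $\Psi(0)$ from landing on its complement $\{x:\langle x,e_0\rangle=0\}$. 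Worse, this is not a removable technicality but the whole content of the lemma: from the formula for $N_w$ one sees $\Ker N_w\subseteq\{x:\langle x,e_0\rangle=0\}$ (any $(t,v,\xi,s)\in\Ker N_w$ has $t=0$ since $w\neq 0$), so the hypothetical Picard--Lefschetz limits live precisely on the divisor at infinity of your chart. Asserting ``the limit is in the cell'' therefore already contains the conclusion $N\omega_{\lim}\neq 0$, and your proof as written begs the question.

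The gap is reparable, but it needs an actual argument, and the needed input is what the paper's proof supplies. One repair along your lines: since $N_w e_0=0$, the group $\exp(\zeta N_w)$ preserves the hyperplane $\{\langle\cdot,e_0\rangle=0\}$; so if $F^3_{\lim}$ lay on it, the entire nilpotent orbit $\exp(\zeta N)F^\bullet_{\lim}$ would stay on it, contradicting the conjunction of (a) the nilpotent orbit theorem's assertion that this orbit lies in the period domain (and, being Hodge-distance-close to the period map's values, in $\calD$ itself, $\calD$ being a connected component of $\check\calD\cap\bD$) for $\operatorname{Im}\zeta\gg 0$, and (b) the containment $\calD\subseteq\{x:\langle x,e_0\rangle\neq 0\}$ established in the proof of Theorem~\ref{hermcubiccase} via the Iwasawa decomposition. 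The paper itself takes a different route that packages the same geometric input without coordinates: Picard--Lefschetz type is equivalent to $T\omega=\omega$; by the AMRT theory the limit line satisfies $\omega=S\omega_0$ with $S\in U(F)(\Cee)$ and $\omega_0$ coming from a point of $\calD$; abelianness of $U(F)$ then gives $T\omega_0=\omega_0$, which is impossible because the stabilizer in $G(\R)$ of a point of $\calD$ is compact and $T$ is a nontrivial unipotent. Either way, some boundary-theoretic fact beyond the bare extension statement of Schmid's theorem must be invoked; your write-up omits it at the only place where the lemma could fail.
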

\begin{proof} (Sketch.) Clearly $(V, F^\bullet, W_\bullet)$ is   of Picard--Lefschetz type $\iff$ $N\omega = 0$, where $F^3 = \Cee \cdot\omega$, $\iff$ $T\omega =\omega$. On the other hand, up to conjugacy, there is a boundary component $F$ such that $T\in U(F)$ and $\omega$ is induced by an element of $\calD(F) = U(F)(\Cee)\cdot \calD$ (cf.\ \cite{amrt}, \S7 of Chapter III). 
Hence $\omega = S\omega_0$ for some $S\in U(F)(\Cee)$, where   $\omega_0$ is induced by an element of $\calD$. Since $U(F)$ is abelian, it follows that $T\omega_0 = \omega_0$. But then $\omega_0$ would  fixed by a nontrivial unipotent element of $G(\R)$, contradicting the fact that its stabilizer in $G(\R)$ is compact.
\end{proof}

To tie the above picture in with the general theory of Hermitian symmetric spaces, Kor\'anyi and Wolf have classified the boundary components of the Hermitian symmetric spaces \cite{wolfkor}. In the four cases of irreducible tube domains of real rank three, the boundary components are almost completely specified by the conditions that they be of tube type and of real rank $0$, $1$, $2$. In particular, the real rank $0$ boundary components are points, the real rank $1$ boundary components are copies of the upper half plane, and the real rank two boundary components are the rank two Hermitian symmetric spaces associated to $\SO(2,10)$ in case $G$ is of type $\E_7$, $\SO^*(8) = \SO(2, 6)$ in case $G$ is of type $D_5$,  $\SU(2,2)=\SO(2,4)$ in case $G$ is of type $A_5$, and of type $C_2 = B_2$, i.e.\ $\Sp(4, \R) \cong \SO(2,3)$, in case $G$ is of type $C_3$ (all equalities mod the center). 

Putting this together, we see that, in the Type IV case, the mixed Hodge structure is of Hodge--Tate type. In the Type III case,  $W_2 = W_2/W_0$ is   an extension of a weight one Hodge structure by a pure weight two Hodge structure and $W_6/W_3 = W_5/W_3$ is a Tate twist of the dual of $W_2$. In this case, presumably all of the information of the mixed Hodge structure is contained in $W_2$. In the Type II case, the Hodge structure on $W_2$ is of $K3$ type, the one on $W_4/W_3$ is a Tate twist of $W_2$, and the one on $W_3/W_2$ is, up to a Tate twist, the Kuga--Satake construction applied to the weight two Hodge structure on $W_2$, because the representation of $\SO(2,k)$ corresponding to $W_3/W_2$ is essentially a spin or half spin representation. It seems likely that, in all cases, the essential information of the mixed Hodge structure is contained in the two step extension $W_3$, and that this extension can be described quite explicitly.

\begin{remark} In the complex case, the weight three variation of Hodge structure is obtained by reassembling a weight one variation (in the unit ball case) or a weight two variation (in the remaining cases). Thus, we must have $N^2 =0$ in the unit ball case and $N^3 = 0$ in the remaining cases. Of course, it is easy to see directly in the case of $\SU(1,n)$, or more generally $\SU(p,q)$, that, if $P$ is a maximal real parabolic subgroup corresponding to a zero-dimensional boundary component, $U$ is the unipotent radical of $P$, and $N\in U$, then $N^2=0$, where we view $N$ as acting on $\Cee^{p+q}$ in the standard representation (compare Rohde \cite{rohde3}, who notes that in the unit ball case the monodromy is never maximal unipotent). 

One can also analyze the resulting limiting mixed Hodge structures along the lines of the tube domain case. For example, in the unit ball case, viewing the complex representation $V \cong \underline{\Cee}\oplus W$ with dual $W\spcheck \oplus \underline{\Cee}$, if $N = N_w$ is a nontrivial monodromy matrix, then $W_1$ and $W_4/W_3$ are  of rank two and pure types $(1,1)$ $(2,2)$, respectively, and $W_3/W_2 \cong W/\Cee\cdot w \oplus w^\perp $, but the Hodge structure on $W_3/W_2$ is constant. Similar but slightly more involved statements hold in the remaining cases; note here that the real rank one boundary components are unit balls.
\end{remark}

\bibliography{refcy}
\end{document}